\theoremstyle{plain}
\newtheorem*{defi*}{Definition}
\newtheorem{thm}{Theorem}%[section]
\newtheorem{defi}[thm]{Definition}
\newtheorem{cor}[thm]{Corollary}
\newtheorem{lem}[thm]{Lemma}
\newtheorem{prop}[thm]{Proposition}
\newtheorem{claim}[thm]{Claim}
\newtheorem{problem}[thm]{Problem}
\newtheorem*{conj*}{Conjecture}
\theoremstyle{remark}
\newtheorem{remark}[thm]{Remark}
\newtheorem*{remark*}{Remark}
\def\R{\mathbb R}
\def\N{\mathbb N}
\def\C{\mathcal C}
\let\hosszuo\H %Erdos miatt ez kell
\def\H{\mathcal H} %hopsz, ez a o" szokott lenni, igy hogy irjuk Erdost?
\def\eps{\varepsilon}
\newcommand{\remove}[1]{}
\begin{document}

\title{Unsplittable coverings in the plane}
\author{
J\'anos Pach\footnote{EPFL, Lausanne and R\'enyi Institute, Budapest.
%Supported by Hungarian Scientific Research Fund EuroGIGA Grant OTKA NN 102029, by Swiss National Science Foundation Grants 200020-144531 and 200021-137574.
Email: {\tt pach@cims.nyu.edu}.} \and
D\"om\"ot\"or P\'alv\"olgyi\footnote{Institute of Mathematics, E\"otv\"os University, Budapest.
%Supported by Hungarian Scientific Research Fund (OTKA) Grants PD 104386 and NN 102029, under EuroGIGA project 10-EuroGIGA-OP-003, and by the J\'anos Bolyai Research Scholarship of the Hungarian Academy of Sciences.
Email: {\tt dom@cs.elte.hu}}}

%\date{}
\setcounter{footnote}{1}
\maketitle
%\clearpage
%\setcounter{page}{0}
\thispagestyle{empty}
\hfill {\em \small Et tu mi fili, Brute?}\footnote{The authors were completely convinced that the unit disk does not misbehave.\\
Research was supported by Hungarian Scientific Research Fund EuroGIGA Grant OTKA NN 102029 and PD 104386, by Swiss National Science Foundation Grants 200020-144531 and 200021-137574.
This work started in 1986, when the second author was still in kindergarden \cite{MP86}.
}

 \hfill \small{(Julius Caesar)}

\begin{abstract}
A system of sets forms an {\em $m$-fold covering} of a set $X$ if every point of $X$ belongs to at least $m$ of its members. A $1$-fold covering is called a {\em covering}. The problem of splitting multiple coverings into several coverings was motivated by classical density estimates for {\em sphere packings} as well as by the {\em planar sensor cover problem}. It has been the prevailing conjecture for 35 years (settled in many special cases) that for every plane convex body $C$, there exists a constant $m=m(C)$ such that every $m$-fold covering of the plane with translates of $C$ splits into $2$ coverings. In the present paper, it is proved that this conjecture is false for the unit disk. The proof can be generalized to construct, for every $m$, an unsplittable $m$-fold covering of the plane with translates of any open convex body $C$ which has a smooth boundary with everywhere {\em positive curvature}. Somewhat surprisingly, {\em unbounded} open convex sets $C$ do not misbehave, they satisfy the conjecture: every $3$-fold covering of any region of the plane by translates of such a set $C$ splits into two coverings. To establish this result, we prove a general coloring theorem for hypergraphs of a special type: {\em shift-chains}. We also show that there is a constant $c>0$ such that, for any positive integer $m$, every $m$-fold covering of a region with unit disks splits into two coverings, provided that every point is covered by {\em at most} $c2^{m/2}$ sets.
\end{abstract}

%\clearpage
%\thispagestyle{empty}
%\tableofcontents

%\vspace{1.7cm}

\clearpage
\section{Introduction}

Let $\C$ be a family of sets in $\mathbb R^d$, and let $P\subseteq\mathbb{R}^d$.
We say that ${\C}$ is an {\em $m$-fold covering of $P$} if every point of $P$ belongs to at least $m$ members of $\C$.
A $1$-fold covering is called %a {\em simple covering} or, in short,
a {\em covering}.
Clearly, the union of $m$ coverings is an $m$-fold covering.
We will be mostly interested in the case when $P$ is a large region or the whole space $\mathbb{R}^d$.

\smallskip

Sphere packings and coverings have been studied for centuries, partially because of their applications in crystallography, Diophantine approximation, number theory, and elsewhere. The research in this field has been dominated by density questions of the following type: What is the most ``economical'' (i.e., least dense) $m$-fold covering of space by unit balls or by translates of a fixed convex body? It is suggested by many classical results and physical observations that, at least in low-dimensional spaces, the optimal arrangements are typically periodic, and they can be split into several lattice-like coverings~\cite{Fej83,FejK93}. Does a similar phenomenon hold for all sufficiently ``thick'' multiple coverings, without any assumption on their densities?

\smallskip

About 15 years ago, a similar problem was raised for {\em large scale ad hoc sensor networks}; see~Feige {\em et al.}~\cite{FeH00}, Buchsbaum {\em et al.}~\cite{B07}. In the -- by now rather extensive -- literature, it is usually referred to as the {\em sensor cover problem}. In its simplest version it can be phrased as follows. Suppose that a large region $P$ is monitored by a set of sensors, each having a circular range of unit radius and each powered by a battery of unit lifetime. Suppose that every point of $P$ is within the range of at least $m$ sensors, that is, the family of ranges of the sensors, $\C$, forms an $m$-fold covering of $P$. If $\C$ can be split into $k$ coverings $\C_1,\ldots,\C_k$, then the region can be monitored by the sensors for at least $k$ units of time. Indeed, at time $i$, we can switch on all sensors whose ranges belong to $\C_i\; (1 \le i \le k)$. We want to maximize $k$, in order to guarantee the longest possible service. Of course, the first question is the following.

\begin{problem}[Pach, 1980~\cite{P80}]\label{thm:question}
Is it true that every $m$-fold covering of the plane with unit disks splits into two coverings, provided that $m$ is sufficiently large?
\end{problem}

\begin{figure}%[H]
\begin{center}
\addtolength{\subfigcapskip}{.3cm}
\subfigure[The disks form a $2$-fold covering of the green triangle. The colors give a split into $2$ coverings. It follows, however, from the proof of Theorem~\ref{thm:disc}, that for any $m$ there is an $m$-fold covering of a triangle by disks that does not split.]{\label{fig:diskcover}
\includegraphics[width=.35\textwidth]{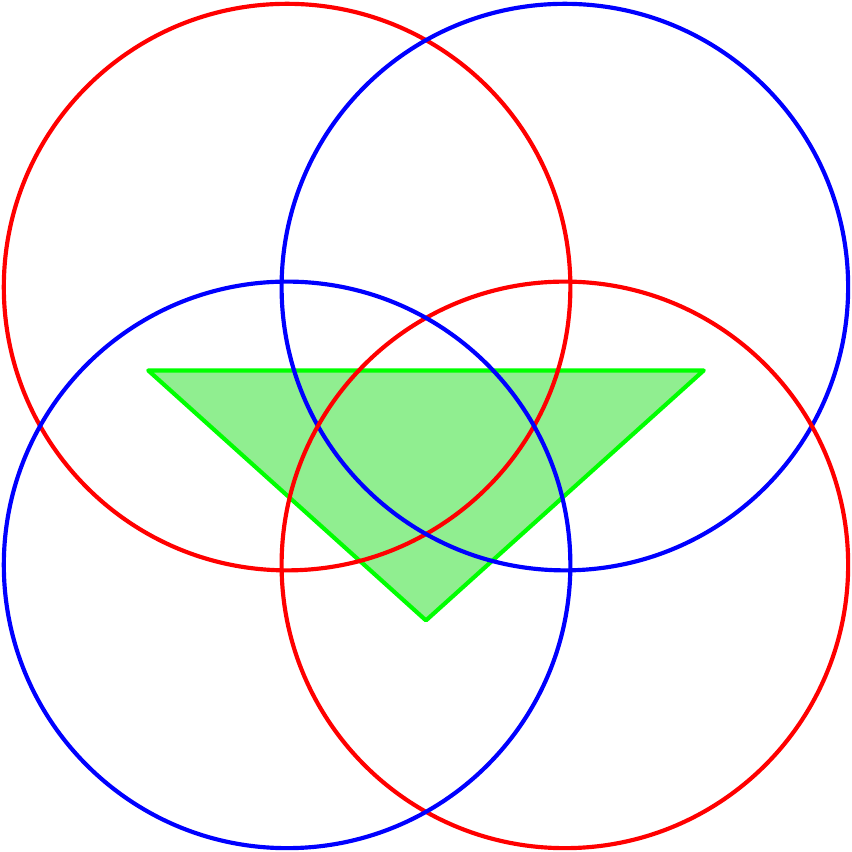}}
\hfill
\subfigure[The parabolas form a $2$-fold covering of the green triangle, but no matter how we $2$-color them, there will be a point not covered by one of the colors. It follows, however, from Theorem~\ref{thm:unbounded} and a standard compactness argument, that any $3$-fold covering of a closed triangle by the translates of an open parabola splits into $2$ coverings.]{\label{fig:parabolacover}
\includegraphics[width=.5\textwidth]{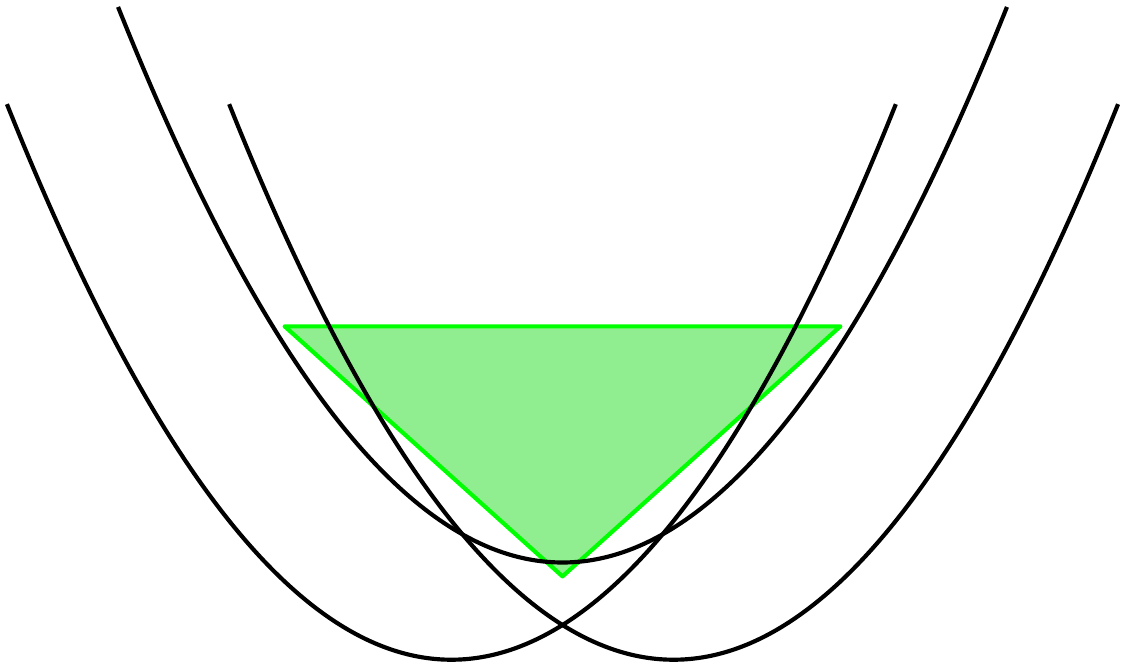}}
%\vspace{1cm}
\caption{Two simple examples.}\label{fig:simple}
\end{center}
\end{figure}

In a long unpublished manuscript, Mani and Pach~\cite{MP86} claimed that the answer to this question was in the affirmative with $m\le 33$. Pach~\cite{PT09} warned that this ``has never been independently verified.'' Winkler~\cite{W09} even conjectured that the statement is true with $m=4$. For more than 30 years, the prevailing conjecture has been that for any open plane {\em convex body} (i.e., bounded convex set) $C$, there exists a positive integer $m=m(C)$ such that every $m$-fold covering of the plane with translates of $C$ splits into two coverings. This conjecture was proved in~\cite{P86} for centrally symmetric convex polygons $C$. It took almost 25 years to generalize this statement to all convex polygons~\cite{TT07,PT10}. Moreover, it was proved by Aloupis {\em et al.}~\cite{A09} and Gibson and Varadarajan~\cite{GV09} that in these cases, for every integer $k$, every at least $bk$-fold covering splits into $k$ coverings, where $b=b(C)$ is a suitable positive constant. See~\cite{PPT,PhD,PTT09}, for surveys.

Here we disprove the above conjecture by giving a negative answer to Problem~\ref{thm:question}.

\begin{thm}\label{thm:disc}
For every positive integer $m$, there exists an $m$-fold covering of the plane with open unit disks that cannot be split into $2$ coverings.
\end{thm}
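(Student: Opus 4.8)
The plan is to reduce the theorem to a finite, purely combinatorial construction, and then to realize that construction geometrically, exploiting that the boundary of a disk has everywhere positive curvature.

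\emph{Reduction.} It suffices to produce, for each $m$, a finite family $\mathcal D$ of open unit disks and a finite set $S$ of points with (i) every point of $S$ lying in at least $m$ members of $\mathcal D$, and (ii) no $2$-coloring of $\mathcal D$ making every point of $S$ lie in disks of both colors. For one can then plant $(\mathcal D,S)$ into an $m$-fold covering of the whole plane so that, within that covering, every point of $S$ is still covered only by members of $\mathcal D$: superimpose the gadget on a lattice covering by unit disks, delete the $O(1)$ lattice disks that reach $S$, and patch the resulting hole with $\mathcal D$ together with a few further disks avoiding $S$. Any $2$-coloring of the full covering then restricts to one of $\mathcal D$, which by (ii) leaves some $s\in S$ monochromatic, so one color class fails to cover $s$ and the covering does not split. (Planted inside a triangle instead, the gadget yields the byproduct noted after the statement.) Thus everything reduces to constructing the finite gadget.

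\emph{The combinatorial core.} Viewing $\mathcal D$ as a vertex set and, for each $s\in S$, the set of disks through $s$ as a hyperedge, condition (ii) says exactly that this hypergraph is not $2$-colorable, while (i) says all its edges have size $\ge m$. I would build such a hypergraph recursively in roughly $m$ stages: the stage-$k$ object is assembled from copies of the stage-$(k-1)$ object so that any $2$-coloring in which every edge is bichromatic would induce a like coloring of some stage-$(k-1)$ copy, and iterating this descent contradicts the non-$2$-colorability of a tiny base object such as a triangle. One arranges the recursion so that the uniformity roughly doubles per stage; the final gadget then has all edges of size $\ge m$ and uses $2^{\Theta(m)}$ disks --- consistent with the last sentence of the abstract, which shows splitting can fail only once some point is covered more than about $2^{m/2}$ times. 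This non-$2$-colorability is the easy half; the real constraint is that the recursion must be chosen in a shape that can actually be drawn with unit disks.

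\emph{Geometric realization --- the crux.} The gadget must fit in a region of bounded diameter, since two unit disks meet only when their centers are within distance $2$. I would place the features of successive stages at \emph{super-geometrically} shrinking scales $\eps_1\gg\eps_2\gg\cdots$ with $\eps_{k+1}\approx\eps_k^{2}$, and this ratio is exactly what positive curvature supplies: inside a window of width $\eps_k$, the boundary circle of a unit disk bows away from its tangent by about $\eps_k^{2}\approx\eps_{k+1}$ --- just enough room for the stage-$(k+1)$ features --- and it bows this way on \emph{both} sides of the point of tangency. That two-sided bowing lets one place a unit disk capturing a prescribed tiny sub-configuration while excluding chosen points lying both to its left and to its right along the relevant arc, a separation no flat boundary can perform, and this is the move by which a stage-$(k-1)$ copy is ``wrapped'' into the next stage without creating spurious incidences. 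The main obstacle is then to carry this out and verify, at every scale, that the incidences among the chosen points and disks are \emph{precisely} those of the target hypergraph --- none missing and, crucially, none extra, with every disk a genuine unit disk; I expect this to be done window by window by a perturbation and limiting argument, the positive-curvature hypothesis being invoked exactly at the two-sided containment step. It is also what separates disks from the convex bodies covered by the positive results cited above: polygons, halfplanes, and unbounded convex sets all have flat boundary directions, hence no bowing, hence no room to run the recursion, and so cannot support this construction.
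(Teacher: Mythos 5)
Your outline follows the same broad architecture as the actual proof (a finite non-$2$-colorable hypergraph, a realization by points and unit disks at two scales, $\eps$ horizontally versus $\eps^2$ vertically, exploiting positive curvature, and finally an extension to an $m$-fold covering of the whole plane), and you correctly identify the two-sided quadratic bowing of the circle as the mechanism that polygons and unbounded sets lack. But as written it has a genuine gap at the planting step, and the admitted ``crux'' is left unexecuted. The reduction ``superimpose on a lattice covering, delete the lattice disks reaching $S$, and patch with a few further disks avoiding $S$'' is not valid for an arbitrary gadget $(\mathcal D,S)$ satisfying your (i)--(ii): there may exist points $x\notin S$ such that \emph{every} unit disk containing $x$ also contains a point of $S$ (dually: a unit disk entirely contained in the union of the gadget's dual disks), and if such an $x$ is not already covered $m$ times by $\mathcal D$, no patching by disks avoiding $S$ can restore $m$-fold coverage at $x$. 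Ruling this out is exactly the content of Lemma~\ref{lem:kieg}/Lemma~\ref{lem:kieg'} in the paper, and it forces an extra invariant on the construction itself --- property~6 of Lemma~\ref{lem:construction}, that the family is \emph{exposed} (topmost and bottommost points of each disk uncovered by the closures of the others), maintained through the whole induction and then exploited via Claim~\ref{obs:nonsym}. Your proposal never states such a property, so the reduction you rely on is unjustified precisely where the paper does real work.

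The other two steps are plans rather than proofs, and they cannot be decoupled the way you suggest: one cannot first pick ``some'' non-$2$-colorable roughly $m$-uniform hypergraph and then hope to draw it with unit disks; the hypergraph must be designed jointly with its realization. The paper's $\H(k,l)$ carries a red/blue bipartition of its edges (sizes $k$ and $l$) with the stronger property that every $2$-coloring yields an all-red red edge or an all-blue blue edge, exactly so that red edges can be realized by disks approaching the point cluster from above and blue edges by disks from below, sharing the root point placed near the two tangencies; the Erd\H os--Szekeres-type two-parameter recursion increases uniformity by one via the shared root (not by doubling per stage, as you propose), and the scales of nested copies shrink geometrically ($\eps(k-1,l),\eps(k,l-1)<\eps/100$), not super-geometrically --- the $\eps$ versus $\eps^2$ relation lives \emph{within} one level, as horizontal versus vertical extent. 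Finally, the verification you defer (``none missing and, crucially, none extra'' incidences, by ``a perturbation and limiting argument'') is where essentially all of the paper's computation lies: explicit coordinates such as $(\pm\eps/3,\mp\eps^2/10)$ for the translated roots and a case-by-case distance check for every pair (point, disk) and for the exposedness condition. So while your intuition about curvature is the right one, the proposal as it stands is missing the exposedness/patching lemma entirely and leaves the realization --- which you yourself call the main obstacle --- unproved.
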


%In fact, the above theorem holds for the covering of any planar set that has a non-empty interior.
%EZ NEM IGAZ VALSZEG, EGY NAGYON KICSI REGIORA PELDAUL
%EZ VISZONT IGAZ
%In fact, Theorem~\ref{thm:disc} also holds if instead of a covering of the whole plane, we consider a covering of any convex region with diameter at least $2$.

Our construction can be generalized as follows.

\begin{thm}\label{thm:smooth} Let $C$ be any open plane convex set, which has two parallel supporting lines with positive curvature at their points of tangencies. Then, for every positive integer $m$, there exists an $m$-fold covering of the plane with translates of $C$ that cannot be split into $2$ coverings.
\end{thm}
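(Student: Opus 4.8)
The plan is to rerun the construction behind Theorem~\ref{thm:disc}, after isolating the two properties of the unit disk that it actually uses and checking that every $C$ as in the hypothesis shares them. The standard reduction is this: to a family $\C$ of translates of $C$ associate the hypergraph $\H(\C)$ with vertex set $\C$ and with a hyperedge $\{D\in\C:p\in D\}$ for every $p\in\R^2$; then $\C$ is an $m$-fold covering of a set $P$ iff every hyperedge arising from a point of $P$ has size at least $m$, and $\C$ splits into two coverings iff $\H(\C)$ admits a proper $2$-colouring, i.e.\ a $2$-colouring of its vertices with no monochromatic hyperedge. So it suffices, for each $m$, to (i) build a finite hypergraph $G_m$ with every hyperedge of size at least $m$ that is not properly $2$-colourable, (ii) realise $G_m$ by translates of $C$ --- assign to each vertex a (say, open) translate and to each hyperedge a point lying in the interiors of exactly the translates of that hyperedge --- and (iii) extend the realisation to an honest $m$-fold covering of the plane. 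Step (iii) is a routine padding argument: since $C$ is bounded, the realisation together with a few extra translates avoiding all the hyperedge-points can be made to $m$-fold cover a bounded region containing those points and a neighbourhood of each, and outside that region one overlays $100m$ generic lattice-like coverings with the $O(1)$ translates reaching it deleted; this creates only new hyperedges and never enlarges an old one, so the resulting family's hypergraph still has no proper $2$-colouring.

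The hypothesis on $C$ is exactly what step (ii) needs. Rotate the plane so that the chosen pair of parallel supporting lines of $C$ is horizontal, and let $a,a'$ be their points of tangency; by assumption $\partial C$ has positive curvature at $a$ and at $a'$, so near $a$ the boundary is the graph of $x\mapsto\alpha-\tfrac12\kappa x^2+o(x^2)$ and near $a'$ the graph of $x\mapsto\alpha'+\tfrac12\kappa' x^2+o(x^2)$ with $\kappa,\kappa'>0$. Place two horizontal auxiliary lines $\ell^+,\ell^-$ just inside $C$, at mutual distance slightly less than the distance between the two supporting lines, so that $\ell^+$ crosses the curved arc near $a$ and $\ell^-$ the curved arc near $a'$. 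A translate $C+\tau$ meeting both lines cuts out an interval on each; as $\tau$ ranges over its two degrees of freedom these two intervals keep a fixed offset of centres, and their half-lengths $r_+,r_-$ obey the single relation $\kappa r_+^2+\kappa' r_-^2=\mathrm{const}$ with a relative error that tends to $0$ as the scale shrinks, whereas a translate meeting only one of the lines cuts out there a single interval with freely choosable centre and half-length in a range. These ``bow-tie'' and ``single interval'' gadgets on two parallel lines are, up to a harmless reparametrization, precisely the ones available for the unit disk (where $\kappa=\kappa'$ and the offset is zero). Both parts of the hypothesis are used: positive curvature is what makes the interval lengths vary (a flat boundary arc would yield only fixed-length intervals, which are harmless), while parallelism of the two supporting lines is what lets a single translate engage both auxiliary lines in a controlled, centre-linked way.

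Step (i) is the combinatorial heart: for every $m$, construct a hypergraph $G_m$ of minimum edge size at least $m$, not properly $2$-colourable, using only these gadgets. I would do this recursively, building $G_{m+1}$ from several rescaled disjoint copies of $G_m$ tied together by fresh bow-tie vertices, and proving non-$2$-colourability by induction: a hypothetical proper $2$-colouring of $G_{m+1}$ would, through the linking vertices, force a proper $2$-colouring of one of the embedded copies of $G_m$, contradicting the inductive hypothesis --- the familiar pattern of Property B lower bound constructions. The bow-tie gadget is scale-invariant in the relevant sense, so rescaling copies does not cost realisability; and at every level one must invoke the relation $\kappa r_+^2+\kappa' r_-^2=\mathrm{const}$ only up to a fixed relative tolerance, which is then met by taking the whole realisation at a small enough scale, so that the idealised parabolic arcs may be replaced by the true boundary of $C$ --- in particular by the unit circle, which is how Theorem~\ref{thm:disc} itself must be proved.

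I expect the main obstacle to be precisely this design problem: producing hypergraphs that are not properly $2$-colourable and are expressible purely in terms of intervals and bow-ties on two parallel lines, and producing them robustly, with uniform relative error margins, so that the merely positively curved (rather than exactly parabolic) boundary of $C$ breaks no incidence. Once such a robust family $\{G_m\}_{m\ge1}$ is in hand, step (ii) is a direct transcription of the gadgets and step (iii) is routine.
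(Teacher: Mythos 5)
Your high-level outline (a non-$2$-colourable hypergraph with all edges of size at least $m$, realized by translates of $C$, then extended to an $m$-fold covering of the whole plane) coincides with the paper's, but the two places where you defer the work are exactly where the content of the proof lies, so there is a genuine gap. First, steps (i)--(ii): you never construct the hypergraphs $G_m$, nor verify that any non-$2$-colourable family is expressible by your gadgets; you only say you would follow the familiar pattern of property~B constructions, and you yourself flag this as ``the main obstacle.'' The paper's solution is the two-parameter family $\H(k,l)$, with a red class of $k$-edges and a blue class of $l$-edges, which is non-$2$-colourable in the strong sense that every colouring produces an all-red red edge or an all-blue blue edge (Lemma~\ref{hipergraf}, an Erd\hosszuo{o}s--Szekeres-type double induction); crucially, the geometric realization (Lemma~\ref{lem:construction} and its analogue in Section~\ref{sec:smooth}) is produced by the \emph{same} recursion: shifted copies of the realizations of $\H(k-1,l)$ and $\H(k,l-1)$ plus one new root point, with \emph{all} points confined to a single $\eps\times\eps^2$ window, the red translates meeting the window along their bottommost boundary arcs and the blue ones along their topmost arcs, the curvature being used through a sandwiching of the boundary between parabolas $y=(1\pm\delta)r_bx^2$, $y=-(1\pm\delta)r_tx^2$. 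This is genuinely different from your picture of translates engaging two far-apart auxiliary lines: your relation $\kappa r_+^2+\kappa' r_-^2=\mathrm{const}$ is correct as a computation, but no hypergraph design is built on it, and the combinatorial and geometric constructions cannot be decoupled the way your plan assumes --- they must be designed together, which is what the paper's recursion does.

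Second, your step (iii) is not routine, and the recipe you sketch fails as stated. In your primal formulation every padding translate must avoid \emph{every} hyperedge-point (otherwise an old hyperedge is enlarged and may become bicoloured even when its original part is monochromatic). But all hyperedge-points lie in a cluster of diameter $\eps$, tiny compared with the width of $C$, so at the cluster's scale the boundary of any translate is essentially a line; it is then far from clear that every insufficiently covered point near, or inside the convex hull of, the cluster can be picked up by a translate that threads past the entire cluster. (Separately, deleting from the lattice coverings the translates that reach the bounded region leaves an under-covered annulus around it.) This completion step is precisely what the paper spends Section~\ref{sec:plane} on: the recursive construction must maintain the \emph{exposed} property (property~6 of Lemma~\ref{lem:construction}: topmost and bottommost points of members are not covered by the closure of any other member), and then Lemma~\ref{lem:kieg'}, proved via the boundary-crossing argument of Claim~\ref{obs:nonsym}, shows that no alien translate is swallowed by the union of the construction, which is what makes the extension legitimate. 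Without a construction of $G_m$ realizable by your gadgets and without a substitute for this exposedness/containment lemma, the proposal does not yet constitute a proof.
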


As was mentioned above, for every open convex polygon $Q$, there exists a smallest positive integer $m(Q)$ such that every $m(Q)$-fold covering of the plane with translates of $Q$ splits into $2$ coverings. We have that $\sup m(Q)=\infty$, where the sup is taken over all convex polygons $Q$. Otherwise, we could approximate the unit disk with convex $n$-gons with $n$ tending to infinity. By compactness, we would conclude that the unit disk $C$ satisfies $m(C)<+\infty$, which contradicts Theorem~\ref{thm:disc}.

\begin{problem}\label{thm:4szog} Does there exist, for any $n>3$, an integer $m(n)$ such that every convex $n$-gon $Q$ satisfies $m(Q)\le m(n)$?
\end{problem}

For any triangle $T$, there is an affine transformation of the plane that takes it into an equilateral triangle $T_0$. Therefore, we have $m(T)=m(T_0)$ and $m(3)$ is finite. For $n=4$, Problem~\ref{thm:4szog} is open.

\smallskip

In spite of our sobering negative answer to Problem~\ref{thm:question} and its analogues in higher dimensions (cp.~\cite{MP86}), there are important classes of multiple coverings such that all of their members are splittable. According to our next, somewhat counter-intuitive result, for example, any $m$-fold covering of $\mathbb{R}^d$ with unit balls can be split into $2$ coverings, provided that no point of the space is covered by too many balls. (We could innocently believe that heavily covered points make it only easier to split an arrangement.)

\begin{thm}\label{thm:lll} For every $d\ge 2$, there exists a positive constant $c_d$ with the following property. For every positive integer $m$, any $m$-fold covering of $\mathbb{R}^d$ with unit balls can be split into two coverings, provided that no point of the space belongs to more than $c_d2^{m/d}$ balls.
\end{thm}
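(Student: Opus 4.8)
The plan is to turn the statement into a hypergraph $2$-colouring problem and apply the Lov\'asz Local Lemma (as the label of the theorem suggests). Splitting the covering $\C$ into two coverings is the same as colouring the balls of $\C$ with two colours so that, for every point $p\in\R^d$, the family $\beta(p)$ of balls of $\C$ containing $p$ receives both colours. So consider the hypergraph $\H$ with vertex set $\C$ and with one edge $\beta(p)$ for each distinct such family; by hypothesis every edge has size between $m$ and $t:=c_d2^{m/d}$, and I want $\H$ to admit a $2$-colouring with no monochromatic edge, which I will find by colouring the vertices uniformly at random.

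Two elementary geometric observations do the work. \emph{Local finiteness:} since no point lies in more than $t$ balls, a volume comparison shows that at most $4^d t$ balls of $\C$ meet any fixed ball of radius $2$ (each such ball has the fixed volume $\omega_d$, lies in the concentric ball of radius $4$, and the multiplicity there is at most $t$). In particular $\C$ is countable and locally finite, so only finitely many edges $\beta(p)$ occur in a bounded region and $\H$ has finite degree everywhere. \emph{Bounded dependency:} the events ``$\beta(p)$ monochromatic'' and ``$\beta(p')$ monochromatic'' are correlated only if some unit ball contains both $p$ and $p'$, which forces $|p-p'|\le 2$; hence every edge sharing a vertex with $\beta(p)$ is of the form $\beta(p')$ with $p'$ in the radius-$2$ ball about $p$, and the number of \emph{distinct} such edges is at most the total number of faces of the arrangement formed by the $\le 4^d t$ balls meeting that ball. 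Since an arrangement of $N$ balls in $\R^d$ has $O_d(N^d)$ faces, each bad event depends on at most $D\le C_d\,t^d$ others, where $C_d$ depends only on $d$.

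Now run the Local Lemma. Colouring each ball red or blue independently with probability $1/2$, the event $A_p$ that $\beta(p)$ is monochromatic has probability $2^{1-|\beta(p)|}\le 2^{1-m}$ and is mutually independent of all but $D\le C_d\,t^d=C_d\,c_d^{\,d}\,2^{m}$ of the others. The symmetric condition $e\cdot 2^{1-m}\cdot(D+1)\le 1$ then holds provided $C_d c_d^{\,d}$ is below an absolute constant, so one simply sets $c_d:=\bigl(1/(4eC_d)\bigr)^{1/d}$, a constant depending only on $d$. Since there are infinitely many events, one finishes by compactness: the $2$-colourings of the countable set $\C$ form a compact product space, each $\overline{A_p}$ is closed, and every finite subfamily of the $A_p$ can be simultaneously avoided by the finite Local Lemma (the degree bound $D$ is inherited), so the intersection of all $\overline{A_p}$ is nonempty; any colouring in it gives the split. (For small $m$ the hypothesis is vacuous, since $c_d 2^{m/d}$ can be smaller than the $m$ forced by an $m$-fold covering, so there is nothing to prove.)

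I expect the only genuinely delicate point to be the dependency estimate. The tempting but wasteful route bounds the number of neighbours of $A_p$ by summing, over the $\le t$ balls $B\ni p$, the number of edges contained in $B$; this gives $t\cdot O_d(t^d)=O_d(t^{d+1})$ and would only license $t=c_d2^{m/(d+1)}$. The improvement --- and the reason the exponent can be $m/d$ --- comes from noticing that \emph{all} those neighbours already lie inside a single ball of radius $2$ about $p$, whose arrangement has only $O_d(t^d)$ faces in total, so the heavy over-counting across the mutually overlapping balls $B\ni p$ is avoided. I would also want to check two minor points: that open versus closed balls is immaterial (points on sphere boundaries still lie in at least $m$ balls because $\C$ covers all of $\R^d$ $m$-fold), and --- for context --- that the heavily covered points excluded here are precisely what the construction behind Theorem~\ref{thm:disc} exploits, explaining why no bound of this kind can hold without an upper restriction on the covering multiplicity.
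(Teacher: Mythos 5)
Your proof is correct and follows essentially the same route as the paper's: its Theorem~\ref{shatter} builds the same hypergraph (balls as vertices, the set of balls through a point as an edge), bounds the number of balls meeting a fixed radius-$2$ ball by the same volume/multiplicity comparison, counts the distinct neighbouring edges via the dual shatter function of balls (your $O_d(N^d)$ face bound for the arrangement is exactly its inequality $\pi^*_{\mathcal C}(n)\le n^d$, which is the step that yields the exponent $m/d$), and closes with the Erd\hosszuo{o}s--Lov\'asz local lemma. The only differences are cosmetic: the paper phrases the argument for a general class of sets with bounded diameter and volume bounded below and then specializes to unit balls, while you argue directly for balls and are somewhat more explicit about the compactness reduction from infinitely many bad events to the finite local lemma.
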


Theorem~\ref{thm:lll} was one of the first geometric applications of the Lov\'asz local lemma~\cite{LLL}, and it was included in~\cite{alonspencer}. Here, we establish a more general statement (see Theorem~\ref{shatter}).

One may also believe that {\em unbounded} convex sets behave even worse than the bounded ones. It turns out, however, that this is not the case.

\begin{thm}\label{thm:unbounded} Let $C$ be an unbounded open convex set and let $P$ be a finite set of points in the plane. Then every $3$-fold covering of $P\subset\mathbb{R}^2$ with translates of $C$ can be split into two coverings of $P$.
\end{thm}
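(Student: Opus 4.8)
The plan is to transfer the statement into a purely combinatorial $2$-coloring problem for a very restricted class of hypergraphs, the \emph{shift-chains}, and then to establish that every shift-chain whose edges all have at least three vertices admits a proper $2$-coloring. Indeed, splitting a covering of $P$ into two coverings is exactly a $2$-coloring, with no monochromatic edge, of the hypergraph whose vertices are the translates used and whose edge associated with a point $p\in P$ is the set of translates covering $p$.

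First I would normalize $C$ and pass to a finite covering. Since $C$ is unbounded, its recession cone is a nontrivial convex cone. If that cone contains a line, then after a linear change of coordinates $C=I\times\R$ for an open interval $I$, so $C$ is a slab, a half-plane, or the plane; the translates used then induce an interval (or half-line) hypergraph on the one-dimensional space of horizontal shifts, and sorting the translates by that shift and coloring them alternately splits even a $2$-fold covering. So I may assume the recession cone is pointed; choosing a recession direction and calling it vertical, no vertical line lies in $C$, so $C=\{(x,y):x\in I,\ y>g_0(x)\}$ for an open interval $I$ and a convex function $g_0\colon I\to\R$, and the same holds for every translate $C+(s,t)$. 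Finally, since $P$ is finite, I may discard all but finitely many translates, retaining three per point of $P$, and color the discarded ones arbitrarily at the end.

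The structural heart is a dual description. The translate $C+(s,t)$ covers $p=(p_x,p_y)$ exactly when $t<\phi_p(s)$, where $\phi_p(s):=p_y-g_0(p_x-s)$; writing $\psi(u):=-g_0(-u)$, which is concave, one has $\phi_p(s)=p_y+\psi(s-p_x)$, so every curve $s\mapsto\phi_p(s)$ is a translate of the single concave curve $\psi$. Order the (finitely many) translates from left to right by their horizontal parameter $s$. If $p_x\neq q_x$, then concavity of $\psi$ makes $\phi_p-\phi_q$ monotone, so the two curves meet at most once; hence there is a threshold value of $s$ to the left of which the set of translates covering $q$ is contained in the set covering $p$, and to the right of which the reverse containment holds (when $p_x=q_x$ the two sets are nested). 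This ``each pair of edges swaps into one another across a pivot'' behavior, and the coherence of the pivots as $p_x$ varies, is what places the covering hypergraph among the shift-chains; distilling the right formulation of shift-chain and verifying this membership is the first task.

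The second, and harder, task is the coloring theorem itself: every shift-chain in which each edge has at least three vertices can be $2$-colored with no monochromatic edge. I expect this to be the main obstacle. Shift-chains are not interval hypergraphs --- an edge may skip over intervening vertices --- so coloring the ordered vertex set alternately fails, and a single left-to-right greedy pass can be forced into a monochromatic edge. The approach I would take is to sweep through the vertices in order, maintaining for each edge that currently straddles the frontier a bounded amount of state (whether it has already changed color, and the color in which its colored part ends), and to use the shift-chain structure to show that the color of the next vertex can always be chosen so that every straddling edge remains destined to see both colors; the ``at least three'' hypothesis supplies exactly the slack this requires, and it cannot be weakened, since translates of a parabola already give $2$-fold coverings of a triangle that do not split (Figure~\ref{fig:parabolacover}). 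Combining the normalization, the finite reduction, the shift-chain representation, and the coloring theorem splits the $3$-fold covering of $P$; appending the discarded translates to either color class finishes the proof, and --- preceded by a compactness reduction to finitely many translates --- the same argument handles closed bounded regions.
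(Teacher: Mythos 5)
Your geometric reduction is sound and is essentially the one in the paper: after disposing of the case where the recession cone contains a line, rotating a recession direction to the vertical and dualizing, the problem becomes $2$-coloring a finite planar point set so that every translate of $-C$ containing at least three of the points receives both colors; and the single-crossing property of the translated concave curves gives exactly the separation property that the paper isolates under the name \emph{special} shift-chain, namely $\max(A\setminus B)<\min(B\setminus A)$ for comparable edges (Proposition~\ref{obs:geom2shift}). Two technical points you wave at are fixable but not free: your edges are not uniform, so to invoke a $3$-uniform coloring statement you must shrink each edge to a $3$-element subset without destroying pairwise separation (the paper does this geometrically with the canonical lowest translates $C(3;x)$), and ties in the horizontal parameter and tangential intersections of the curves need a perturbation or explicit care.

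The genuine gap is the combinatorial theorem you defer. As you state it --- ``every shift-chain in which each edge has at least three vertices can be $2$-colored'' --- it is \emph{false}: Fulek's example (Figure~\ref{fig:rado}) is a $3$-uniform shift-chain, totally ordered by $\preceq$ but not special, that admits no proper $2$-coloring, and whether $m$-uniform shift-chains are $2$-colorable for some larger fixed $m$ is posed as an open problem in the paper. So any correct argument must carry the separation property into the coloring statement and use it, which your plan records geometrically but then drops. Moreover, the sweep-with-bounded-state heuristic you sketch is not a proof and is not how the known argument goes: the proof of Theorem~\ref{thm:shiftchain} is global, building an auxiliary digraph from pairs of triples sharing their middle vertex, showing every out-degree is at most one and that specialness forbids directed cycles of length greater than two (Claims~\ref{abc} and~\ref{abcd}), decomposing the digraph into quasi-trees, properly $2$-coloring their components, and finishing with a delicate assignment for the remaining vertices so that every middle vertex disagrees with one of its extreme vertices. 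Until you supply an argument of this kind, with the special hypothesis stated and used, your proposal reproduces the frame of the paper's proof but omits its core.
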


In fact, using a standard compactness argument, Theorem~\ref{thm:unbounded} also holds if $P$ is any {\em compact} set in the plane.
%This is quite surprising in view of Theorem~\ref{thm:smooth}.
However, Theorem~\ref{thm:unbounded} does not generalize to higher dimensions. Indeed, it follows from the proof of Theorem~\ref{thm:disc} that, for every positive integer $m$, there exists a finite family $\C$ of open unit disks in the plane and a finite set $P\subset \mathbb{R}^2$ such that $\C$ is an $m$-fold covering of $P$ that cannot be split into two coverings. Consider now an unbounded convex cone $C'$ in $\mathbb{R}^3$, whose intersection with the plane $\mathbb{R}^2$ is an open disk. Take a system of translates of $C'$ such that their intersections with the plane coincide with the members of $\C$. These cones form an $m$-fold covering of $P$ that cannot be split into two coverings.

For interesting technical reasons, the proof of Theorem~\ref{thm:unbounded} becomes much easier if we restrict our attention to multiple coverings of the {\em whole plane}. In fact, in this case, we do not even have to consider {\em multiple} coverings! Moreover, the statement remains true in higher dimensions.

\begin{prop}\label{claim}
Let $C$ be an unbounded line-free open convex set in $\mathbb{R}^d$. %dom: nem kell kizarni teljes teret, mert ugyis line-free.
Then every covering of $\mathbb{R}^d$ with translates of $C$ can be split into two, and hence into infinitely many, coverings.
\end{prop}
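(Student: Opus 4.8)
The plan is to exploit the fact that an unbounded line-free open convex set $C$ has a nonempty recession cone that contains a half-line but no full line, and to use translations in a fixed recession direction to "thin out" a covering without creating a hole. Fix a unit vector $v$ in the interior of the recession cone of $C$ (after an affine change of coordinates we may assume $v$ points straight down, say $v=-e_d$). The key geometric observation is monotonicity: if $x\in C$ then $x+tv\in C$ for all $t\ge 0$, because $C+tv\subseteq C$ for $t\ge 0$. Consequently, for a translate $C+p$ the property ``$q\in C+p$'' depends on $p$ only through a downward-monotone condition: if $q\in C+p$ then $q\in C+p'$ for every $p'$ obtained from $p$ by moving it upward (i.e.\ $p'=p+sv$ with... wait, one has to be careful with signs), so in effect each point $q$ is covered by the translate whose ``base point'' lies in an upward-unbounded region. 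I would make this precise by introducing, for each point $q$ of the plane (space), the \emph{shadow} $S_q=\{p: q\in C+p\}=q-C$, which is itself an unbounded line-free open convex set, a translate of $-C$, and which is \emph{downward-closed} in the $v$-direction.

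First I would reduce to a combinatorial statement about these shadows. Let $\C=\{C+p_i\}$ be a covering of $\R^d$. Colour the translate $C+p_i$ red or blue according to the parity of $\lfloor \langle p_i, e_d\rangle / N\rfloor$ for a suitably large $N$ — that is, cut space into horizontal slabs of height $N$ and alternate colours by slab, assigning each translate the colour of the slab containing its base point $p_i$. (If several base points coincide or the family is infinite, a minor perturbation or a direct case analysis handles ties.) I then need: every point $q$ is covered by at least one red translate and at least one blue translate. By the covering hypothesis $q$ lies in some $C+p_j$; I then want to produce, from this one translate, translates of both colours covering $q$. Here is where unboundedness enters: the set of base points $p$ with $q\in C+p$ is the downward-closed unbounded convex set $q-C$.

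The main step — and the main obstacle — is to show that the covering actually \emph{supplies} translates with base points arbitrarily far down inside each such shadow, so that both a red slab and a blue slab are hit. This is not automatic from a single covering assumption at the point $q$ alone; instead I would argue by a compactness / local-finiteness plus a global descent argument: start from $C+p_j\ni q$, and walk downward. Formally, consider the point $q'=q+Mv$ for large $M$ (far below $q$); it too is covered, by some $C+p_k$, and since $C+p_k\ni q'$ and $q = q' - Mv = q' + M(-v)$... no: since $v$ is a recession direction, $q'\in C+p_k$ forces $q'+tv\in C+p_k$ but I need the opposite inclusion for $q$, so instead I use that $q\in (C+p_k)$ will \emph{not} generally hold. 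The correct route is the \emph{shadow} route: the shadow $S_q=q-C$ is upward-unbounded (since $-C$ has recession direction $-v=e_d$, pointing up), so it meets every sufficiently high slab; by the covering property applied to a point \emph{high above} $q$ lying in $C+p$ exactly when $p$ lies low, one transfers coverage. Let me restate cleanly: $q\in C+p \iff p\in q-C$, and $q-C$ contains the upward ray $\{q_0 + t e_d: t\ge 0\}$ from any of its points $q_0$. So the set of \emph{feasible base points for $q$} is unbounded upward, hence meets infinitely many slabs of both parities — \emph{but} we only get to use base points $p_i$ that actually occur in $\C$. To close this gap I invoke that $\C$ covers \emph{all} of $\R^d$, in particular covers points $q + t e_d$ for arbitrarily large $t$; a translate $C+p_i$ covering such a high point has $p_i \in (q+te_d) - C$, and since this shadow is a downward translate of $q-C$, for $t$ large the base point $p_i$ lies in a high slab, while also $q\in C+p_i$ because $q = (q+te_d) - t e_d = (q+te_d) + t(-e_d)$ and $-e_d=v$ is a recession direction of $C$, so $q+te_d\in C+p_i \Rightarrow q\in C+p_i$. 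Wait — that inclusion goes the wrong way again; $v$ recession means $C+p_i$ is closed under adding $v$, i.e.\ under going \emph{down}, so from $q+te_d\in C+p_i$ I get $q+te_d + sv = q+(t-s)e_d\in C+p_i$ for all $s\ge 0$, and taking $s=t$ gives $q\in C+p_i$. Good — that is exactly right. So: for every large $t$, pick a translate of $\C$ covering $q+te_d$; its base point sits in slab number roughly $t/N$ (all of which it can be forced into by choosing $t$ in the right residue class mod $2N$), and that same translate covers $q$. Choosing $t$ once in each parity class yields a red translate and a blue translate through $q$, completing the proof; splitting into two coverings then trivially iterates to infinitely many (re-split one of the two, etc., or use countably many residue classes mod $N$ from the start). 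The delicate point to get right in the write-up is precisely the sign bookkeeping of the recession direction and handling base points exactly on slab boundaries, plus noting line-freeness is what guarantees $v\ne -v$, i.e.\ that ``up'' and ``down'' are genuinely different so the slab colouring is nontrivial.
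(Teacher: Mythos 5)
Your reduction to the shadow $S_q=q-C$ and the observation that a translate covering $q+te_d$ also covers $q$ (since $-e_d$ is a recession direction) are both correct, but the colouring you build on them does not work, and the step where you try to close the gap you yourself identified is exactly where the argument breaks. You colour each translate $C+p_i$ by the parity of the horizontal slab of width $N$ containing its base point $p_i$, and you claim that ``by choosing $t$ in the right residue class mod $2N$'' you can force the base point of a covering translate of $q+te_d$ into a slab of prescribed parity. This is false: the covering hands you \emph{some} translate through $q+te_d$, and its base point can lie in any sufficiently high slab, independently of your choice of $t$. Concretely, take $d=2$, $C=\{(x,y): y<-x^2\}$ (open, convex, line-free, with recession direction $-e_2$, matching your normalization), and the covering consisting of all translates $C+(j\delta,\,2Nk)$ with $j,k\in\Z$ and $\delta$ small. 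This covers the plane (indeed infinitely often), yet every base point has second coordinate a multiple of $2N$, so under your slab colouring every translate receives the same colour and the other colour class covers nothing. More generally, no colouring that is determined a priori by the position of each translate can succeed here; the split must be chosen adaptively, depending on which translates the given covering actually contains.

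This is precisely what the paper's proof does differently, and why it needs only that $C$ is open and closed under moving in the recession direction. There, one places a closed $(d-1)$-dimensional ball $B_1\times\{0\}$ orthogonal to the recession axis, uses compactness of the ball and openness of the translates to extract a \emph{finite} subfamily $\C_1$ covering it, notes that these finitely many translates lie strictly above some level $z_1$, then covers the larger ball $B_2\times\{z_1\}$ by a finite subfamily $\C_2$ disjoint from $\C_1$, and so on, with radii tending to $\infty$ and levels $z_i\to-\infty$. Upward closure along the recession direction then gives that every point of $\R^d$ lies above all but finitely many of the balls $B_i\times\{z_{i-1}\}$, hence is covered by all but finitely many of the $\C_i$; taking the odd- and even-indexed unions (or any partition of the index set into infinitely many infinite classes) splits the covering into two, and in fact infinitely many, coverings. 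The adaptive, covering-dependent selection of the colour classes is the missing idea in your proposal; your geometric observations (recession monotonicity, transferring coverage from $q+te_d$ down to $q$) reappear there only in the harmless form ``each $\C_i$ covers everything above its ball.''
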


The reason why we assume here that $C$ is {\em line-free} (i.e., does not contain a full line) is the following.
If $C$ contains a straight line, then it can be obtained as the direct product of a line $l$ and a $(d-1)$-dimensional open convex set $C'$. Any arrangement $\C$ of translates of $C$ in $\mathbb{R}^d$ is combinatorially equivalent to the $(d-1)$-dimensional arrangement of translates of $C'$, obtained by cutting $\C$ with a hyperplane orthogonal to $l$. In particular, the problem whether an $m$-fold covering of $\mathbb{R}^d$ with translates of $C$ can be split into two coverings reduces to the respective question about $m$-fold coverings of $\mathbb{R}^{d-1}$ with translates of $C'$.

Proposition~\ref{claim} is false already in the plane without the assumption that $C$ is open. %pl lehet venni egy savot, meg az egyik oldalan egy intervallumot.
However, every $2$-fold covering of the plane with translates of an unbounded $C$ can be split into two coverings.
We omit the proof as it reduces to a simple claim about intervals. %, using that for every unbounded line-free convex set $C$, there is a direction, which we can assume without loss of generality to be vertical, such that the intersection of $C$ with any vertical line is either a half-line pointing vertically upwards, or one of at most two intervals that form the leftmost or rightmost points of $C$.
%dom: ezt elkezdtem leirni, de rengeteg unalmas esetet meg kell nezni kulon, ki kell mondani intervallumokrol, hogy szetszedheto veluk valo fedes barmilyen halmaz felett stb.

%Denote these intervals by $L$ and $R$, which might consist of a single point and they might contain any of their endpoints.

%\begin{prop}\label{claimplane}
%If $C$ is an unbounded line-free convex set in the plane, then every $2$-fold covering of the plane with translates of $C$ can be split into two coverings.
%\end{prop}

However, in higher dimensions, the similar claim is false. % without the {\em openness} %is a {\em necessary} assumption.

\begin{thm}\label{ugly} There is a bounded convex set $C'\subset \mathbb{R}^3$ with the following property. One can construct a family of translates of $C=C'\times [0,\infty)\subset\mathbb{R}^4$ which covers every point of $\mathbb{R}^4$ infinitely many times, but which cannot be split into two coverings.
\end{thm}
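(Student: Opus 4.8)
The plan is to realise in $\mathbb R^4$ an ``infinitely thick'', unbounded version of the planar unsplittable disk covering behind Theorem~\ref{thm:disc}, the recession ray $[0,\infty)$ serving both to fatten it to infinite multiplicity and to stretch it along an unbounded column, while a tame background handles the rest of space. Concretely I would take $C'$ to be a thin cylinder over a disk, $C'=D\times(0,1)\subset\mathbb R^3$ with $D$ the open unit disk, so that a translate of $C=C'\times[0,\infty)$ is a set $(D+u)\times(c,c+1)\times[s,\infty)$. Two elementary facts will be used. First, inside the slab $\{z=\tfrac12\}$ the $z$-coordinate is inert, so a family of translates of $C$ all with $c=0$ behaves there exactly like a family of translates of $D$ in the plane. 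Second, cutting $\mathbb R^4$ by a hyperplane $\{w=t\}$, the translate $(D+u)\times(c,c+1)\times[s,\infty)$ shows up there iff $s\le t$, so the sliced family only grows with $t$. The target is a family covering $\mathbb R^4$ infinitely often whose obstruction to splitting sits inside the closed column $\Gamma=\overline D\times(0,1)\times\mathbb R$.

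First I would manufacture, inside the slice $\{z=\tfrac12\}$, an $\infty$-fold covering $\mathcal D_\infty$ of the closed disk $B_0=\overline D$ by open unit disks that cannot be split into two coverings of $B_0$. From the proof of Theorem~\ref{thm:disc} I would extract, for every $m$, a finite family $\mathcal D_m$ covering $B_0$ at least $m$ times and admitting no $2$-colouring in which both colour classes still cover $B_0$, arranged so that $\mathcal D_1\subseteq\mathcal D_2\subseteq\cdots$; put $\mathcal D_\infty=\bigcup_m\mathcal D_m$. This is $\infty$-fold over $B_0$, and it does not split: given a $2$-colouring $\chi$, pick for each $m$ a point $p_m\in B_0$ whose $\mathcal D_m$-disks are $\chi$-monochromatic of colour $c_m$; pass to a subsequence with $p_{m_k}\to p^\ast\in B_0$ and $c_{m_k}=c$ constant; then any $d\in\mathcal D_\infty$ with $p^\ast\in d$ lies in $\mathcal D_{m_k}$ and contains $p_{m_k}$ for all large $k$ (here nestedness is essential), so $\chi(d)=c$, and $p^\ast$ is covered by colour $c$ alone.

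Next I would lift this to a column and then fill the complement. Write $\mathcal D_\infty$ as a disjoint union of layers $L_0=\mathcal D_1$ and $L_{-k}=\mathcal D_{k+1}\setminus\mathcal D_k$ for $k\ge1$, so that every tail $L_{-k}\cup L_{-k-1}\cup\dots=\mathcal D_\infty\setminus\mathcal D_k$ is again an $\infty$-fold covering of $B_0$, and lift $d\in L_j$ to $\check d=d\times(0,1)\times[j,\infty)$, a translate of $C$. The family $\check{\mathcal D}=\{\check d\}$ is an $\infty$-fold covering of $\Gamma$ (at height $w$ the active disks form a tail of $\mathcal D_\infty$) and is not splittable over $\Gamma$: a colouring of $\check{\mathcal D}$ induces one of $\mathcal D_\infty$, hence a monochromatic $p^\ast\in B_0$, and since every layer index is $\le0$ the point $(p^\ast,\tfrac12,0)$ is covered in $\check{\mathcal D}$ exactly by $\{\check d:d\in\mathcal D_\infty,\ d\ni p^\ast\}$, all of one colour. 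It remains to cover $\mathbb R^4\setminus\bigl(\overline D\times\{\tfrac12\}\times\mathbb R\bigr)$ — the residual slice $\overline D\times\{\tfrac12\}\times\mathbb R$ is already covered infinitely by $\check{\mathcal D}$ — by a background $\mathcal B$ consisting of two periodic subfamilies of translates of $C$: one whose base disks are disjoint from $B_0$ (covering everything whose $xy$-projection is off $B_0$), and one whose $z$-slabs all avoid the level $z=\tfrac12$ (covering everything with $z\ne\tfrac12$), each taken with a range of $w$-births so the total is $\infty$-fold. No member of $\mathcal B$ passes through a point $(p,\tfrac12,0)$ with $p\in B_0$ — the first kind misses $p$, the second misses the level $\tfrac12$ — so in $\check{\mathcal D}\cup\mathcal B$ the witness $(p^\ast,\tfrac12,0)$ remains monochromatic. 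Thus $\check{\mathcal D}\cup\mathcal B$ is an $\infty$-fold covering of $\mathbb R^4$ that cannot be split into two coverings.

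The compactness argument and the verification that $\mathcal B$ covers the required region are routine bookkeeping once the pieces are in place. The real obstacle is the very first step: producing the nested chain $\mathcal D_1\subseteq\mathcal D_2\subseteq\cdots$ of finite unsplittable $m$-fold coverings of a fixed planar region — equivalently, a single $\infty$-fold covering of a bounded planar region by open unit disks that does not split. This is where one must enter the recursive machinery of the proof of Theorem~\ref{thm:disc} (and where the extra room given by the cylinder $C'\subset\mathbb R^3$, as opposed to a flat disk, can be exploited), and it is precisely the content of the phrase ``it follows from the proof of Theorem~\ref{thm:disc}'' in the statement.
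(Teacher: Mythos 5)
There is a genuine gap, and it sits exactly at the step you postpone to the end. The primitive your whole construction rests on --- a nested chain $\mathcal{D}_1\subseteq\mathcal{D}_2\subseteq\cdots$ of finite unsplittable $m$-fold coverings of the closed disk $B_0$, equivalently (by your own, correct, limiting argument) a single $\infty$-fold covering of $B_0$ by \emph{open} unit disks that does not split --- does not exist, so it cannot ``follow from the proof of Theorem~\ref{thm:disc}''. Indeed, if every point of a compact set $B_0$ lies in infinitely many members of a family of open sets, one can greedily extract pairwise disjoint \emph{finite} subfamilies $\mathcal{G}_1,\mathcal{G}_2,\ldots$, each covering $B_0$: compactness yields a finite subcover, deleting finitely many members leaves every point covered infinitely often, and one repeats. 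Coloring the $\mathcal{G}_i$ alternately (and the unused members arbitrarily) splits the family into two coverings of $B_0$; this is precisely the mechanism of Proposition~\ref{claim}$'$. Hence no $\infty$-fold covering of $B_0$ by open disks is unsplittable, and since your compactness argument correctly shows that a nested chain of unsplittable $\mathcal{D}_m$'s would produce such a covering, the nested chain itself is impossible (the finite families promised in the caption of Figure~\ref{fig:diskcover} exist for each $m$ separately, but cannot be nested). Worse, the greedy extraction can be carried out inside every tail $\mathcal{D}_\infty\setminus\mathcal{D}_k$ simultaneously (each tail is again $\infty$-fold over $B_0$), which yields a single $2$-coloring in which both color classes cover $B_0$ within every tail; lifted to $\check{\mathcal{D}}$ this splits your family over the whole column $\Gamma$ at every height $w$, and the periodic background $\mathcal{B}$ is trivially splittable as well. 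So the covering you describe can always be split.

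The failure is structural, not a matter of bookkeeping: your $C'=D\times(0,1)$ is open, so the only non-open part of $C=C'\times[0,\infty)$ is its bottom face, and all the incidences that are supposed to witness unsplittability reduce, after projecting out the $z$- and $w$-coordinates, to incidences of points with \emph{open} planar disks --- exactly the setting where the greedy/compactness splitting applies. The paper's proof goes a completely different way: following Nasz\'odi and Taschuk, it takes $C'$ to be a bounded convex set that is neither open nor closed, whose boundary is ``erratic'' in that the boundary arc retained at the level $z=\frac{1}{i^2}$ consists of the points parameterized by those $x\in[0,1]$ whose $i$th binary digit is $1$. The covering consists of a tame part avoiding a distinguished arc $X$, together with translates $\hat C_1,\hat C_2,\ldots$ for which $\hat C_i$ covers the point of $X$ parameterized by $x$ exactly when the $i$th digit of $x$ is $1$: infinite multiplicity comes from binary representations with infinitely many $1$'s, while any color class whose index set has infinite complement misses the point whose digits are supported on that complement. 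Thus unsplittability is extracted from boundary (non-open) incidences and a digit argument, not from the planar disk construction of Theorem~\ref{thm:disc}, which --- as explained above --- can never supply an unsplittable covering of infinite multiplicity.
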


The construction given in Section~\ref{sec:unbounded} is based on an example of Nasz\'odi and Taschuk \cite{NT}, and explores the fact that the boundary of $C'$ can be rather ``erratic.''
We do not know whether sufficiently thick coverings of $\R^3$ by translates of an unbounded line-free convex set can be split into two coverings or not.

\bigskip

In the sequel, we will study the equivalent {\em ``dual'' form} of the above questions. Consider a family $\C=\{C_i : i\in I\}$ of translates of a set $C\subset\mathbb{R}^d$ that form an $m$-fold covering of $P\subseteq \mathbb{R}^d$. Suppose without loss of generality that $C$ contains the origin $0$. For every $i\in I$, let $c_i$ denote the point of $C_i$ that corresponds to $0\in C$. In other words, we have $\C=\{ C+c_i : i\in I\}$. Assign to each $p\in P$ a translate of $-C$, the reflection of $C$ about the origin, by setting $C^*_p=-C+p$. Observe that $$p\in C_i \;\; \Longleftrightarrow \;\; c_i\in C^*_p.$$
In particular, the fact that $\C$ forms an $m$-fold covering of $P$ is equivalent to the following property: Every member of the family $\C^*=\{C^*_p : p\in P\}$ contains at least $m$ elements of $\{c_i : i\in I\}$. Thus, Theorem~\ref{thm:disc} can be rephrased in the following {\em dual} form.

\medskip %LNCS-ben nincs pont tetelek utan!!!???
\noindent{\bf Theorem~\ref{thm:disc}'.} {\em For every $m\ge 2$, there is a set of points $P^*=P^*(m)$ in the plane with the property that every open unit disk contains at least $m$ elements of $P^*$, and no matter how we color the elements of $P^*$ with two colors, there exists a unit disk such that all points in it are of the same color.}
\medskip

A set system {\em not} satisfying this condition is said to have {\em property B} (in honor of Bernstein) or is
{\em $2$-colorable} (see \cite{Mi37,E63,RS}). Generalizations of this notion are related to {\em conflict-free colorings} \cite{E02} and have strong connections, e.g., to the theory of {\em $\eps$-nets}, {\em geometric set covers} and to {\em combinatorial game theory} \cite{HW,PTT09,Alon10,V10,G09}.

The rest of this paper is organized as follows.
In the next three sections, we prove Theorem~\ref{thm:disc}' in $3$ steps.
In Section~\ref{sec:hypergraph}, we exhibit a family of non-$2$-colorable $m$-uniform hypergraphs $\H(k,l)$.
In Section~\ref{sec:finite}, we construct planar ``realizations'' of these hypergraphs, where the vertices correspond to points and the (hyper)edges to unit disks, preserving the incidence relations.
In Section~\ref{sec:plane}, we extend this construction, without violating the colorability condition, so that every disk contains at least $m$ points.
In Section~\ref{sec:smooth}, we modify these steps in order to establish Theorem~\ref{thm:smooth}, a generalization of Theorem~\ref{thm:disc} to bounded plane convex bodies with a smooth boundary.
%In Section~\ref{sec:shiftchain} we present the (algorithmic) the proof of Theorem~\ref{thm:unbounded}.
Sections~\ref{sec:shiftchain} and~\ref{sec:unbounded} contain the proofs of our results related to multiple coverings with {\em unbounded} convex sets: Theorem~\ref{thm:unbounded}, Proposition~\ref{claim}, and Theorem~\ref{ugly}.
The proof of a more general version of Theorem~\ref{thm:lll}, using the Lov\'asz local lemma, can be found in Section~\ref{sec:lll}.
Finally, in Section~\ref{sec:open} we make some concluding remarks and mention a couple of open problems.

%We define the distance of two disks, $C$ and $C'$ as the distance of their centers and denote it simply by $d(C,C')$.
%We denote the boundary of a set $C$ by $\bd C$.
\numberwithin{thm}{section}
\section{A family of non-$2$-colorable hypergraphs $\H(k,l)$}\label{sec:hypergraph}

In this section we define, for any positive integers $k$ and $l$, an abstract hypergraph $\H(k,l)$ with vertex set $V(k,l)$ and edge set $E(k,l)$.
The hypergraphs $\H(k,l)$ are defined recursively.
The edge set $E(k,l)$ will be the disjoint union of two sets, $E(k,l)=E_R(k,l)\cupdot E_B(k,l)$, where the subscripts $R$ and $B$ stand for red and blue.
All edges belonging to $E_R(k,l)$ will be of size $k$, all edges belonging to $E_B(k,l)$ will be of size $l$.
In other words, $\H(k,l)$ is the union of a {\em $k$-uniform} and an {\em $l$-uniform} hypergraph.
If $k=l=m$, we get an $m$-uniform hypergraph.

\begin{defi} Let $k$ and $l$ be positive integers.
\begin{enumerate}
\item For $k=1$, let $V(1,l)$ be an $l$-element set.\\ Set $E_R(1,l):=V(1,l)$ and $E_B(1,l):=\{V(1,l)\}$.

\item For $l=1$, let $V(k,1)$ be a $k$-element set.\\ Set $E_R(k,1):=\{V(k,1)\}$ and $E_B(k,1):=V(k,1)$.

%\item KITOROLTEM AZ $u_{k,l}$-ET, MERT AZ NEM JO, HISZEN PELDAUL $u_{2,2}$-BOL MAR KETTO LENNE A $V(3,3)$-BAN. INKABB A $p$-T PROBALTAM MEG ELMAGYARAZNI.

\item For any $k,l>1$, we pick a new vertex $p$, called the {\em root}, and let
$$V(k,l):=V(k-1,l)\cupdot V(k,l-1)\cupdot\{p\},$$
$$E_R(k,l):=\{e\cup \{p\} \,:\, e\in E_R(k-1,l)\} \cup E_R(k,l-1),$$
$$E_B(k,l):=E_B(k-1,l)\cupdot\{e\cup \{p\} \,:\, e\in E_B(k,l-1)\}.$$
\end{enumerate}
\end{defi}

\begin{figure}[t]
\begin{center}
\includegraphics[width=10cm]{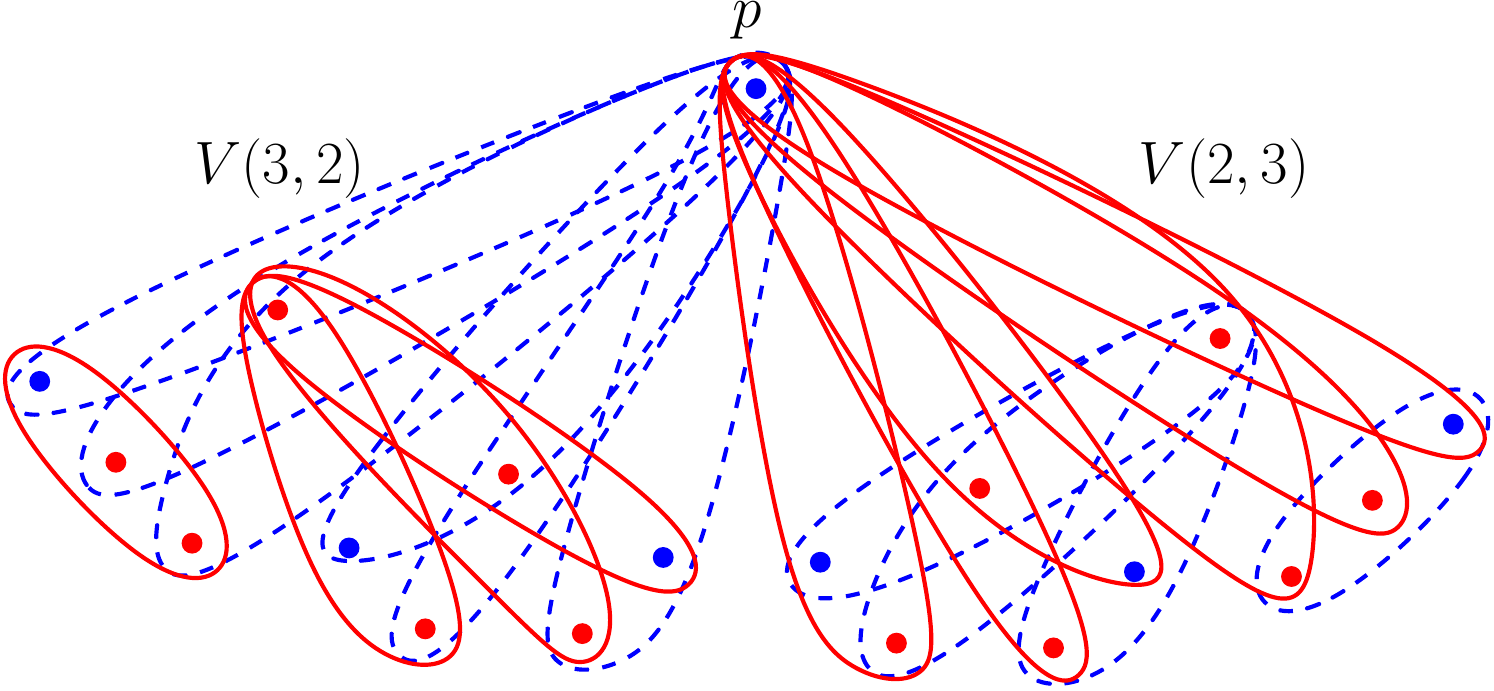}
\caption{The hypergraph $\H(3,3)$ with (arbitrarily) 2-colored vertices. There is a blue (dashed) set with $3$ blue vertices or a red (solid) set with $3$ red vertices.}\label{fig:halmazrendszer}
\end{center}
\end{figure}

By recursion, we obtain that
$$|V(k,l)|={k+l\choose k}-1,$$
$$|E_R(k,l)|={k+l-1\choose k},\,\, |E_B(k,l)|={k+l-1\choose l},$$
$$|E(k,l)|=|E_R(k,l)|+|E_B(k,l)|= {k+l\choose k}.$$

%The proof of the following simple lemma, which is very similar to the argument of Erd\hosszuo{o}s and Szekeres~\cite{ESz}, can be found in Appendix~\ref{app:hipergraf}. %~\cite{P10}.

\begin{lem}[\cite{P10}]\label{hipergraf}
For any positive integers $k, l$, the hypergraph $\H(k,l)$ %=(V(k,l),E(k,l))$
is not $2$-colorable. Moreover, for every coloring of $V(k,l)$ with red and blue, there is an edge in $E_R(k,l)$ such that all of its $k$ vertices are red or an edge in $E_B(k,l)$ such that all of its $l$ vertices are blue.
\end{lem}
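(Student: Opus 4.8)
The plan is to prove Lemma~\ref{hipergraf} by induction on $k+l$, following the recursive structure of the definition of $\H(k,l)$. The base cases $k=1$ and $l=1$ are immediate: in $\H(1,l)$, if all $l$ vertices are blue, then $V(1,l)\in E_B(1,l)$ is a monochromatic blue edge; otherwise some vertex is red, and the corresponding singleton in $E_R(1,l)$ is a monochromatic red edge. The case $l=1$ is symmetric.

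For the inductive step with $k,l>1$, fix an arbitrary red/blue coloring $\chi$ of $V(k,l)=V(k-1,l)\cupdot V(k,l-1)\cupdot\{p\}$. The argument splits on the color of the root $p$. Suppose first that $p$ is \emph{red}. Consider the restriction of $\chi$ to $V(k-1,l)$. By the inductive hypothesis applied to $\H(k-1,l)$, either there is an edge $e\in E_R(k-1,l)$ with all $k-1$ vertices red, in which case $e\cup\{p\}\in E_R(k,l)$ is a monochromatic red edge of size $k$ (using that $p$ is red); or there is an edge in $E_B(k-1,l)\subseteq E_B(k,l)$ with all $l$ vertices blue, which is then a monochromatic blue edge of $\H(k,l)$. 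Either way we are done. Symmetrically, if $p$ is \emph{blue}, apply the inductive hypothesis to $\H(k,l-1)$ on the vertex set $V(k,l-1)$: either we get a monochromatic red edge in $E_R(k,l-1)\subseteq E_R(k,l)$, or a monochromatic blue edge $e\in E_B(k,l-1)$, whence $e\cup\{p\}\in E_B(k,l)$ is monochromatic blue of size $l$.

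The key observation that makes this go through is precisely how the recursion distributes the root: $p$ is adjoined to the red edges coming from the \emph{first} factor $\H(k-1,l)$ and to the blue edges coming from the \emph{second} factor $\H(k,l-1)$, while the blue edges of the first factor and the red edges of the second factor are inherited unchanged. So whichever color $p$ receives, it ``cooperates'' with exactly the half of the structure whose recursion we invoke, and the other half's inherited edges are available for free. There is no real obstacle here — the statement has been set up so that the induction is essentially bookkeeping; the only thing to be careful about is matching each of the four pieces $E_R(k-1,l)$, $E_R(k,l-1)$, $E_B(k-1,l)$, $E_B(k,l-1)$ to the correct branch and verifying that sizes ($k$ for red, $l$ for blue) are preserved, which they are by construction. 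This also transparently reproves non-$2$-colorability in the strong form stated, since a proper $2$-coloring would have to avoid \emph{all} monochromatic edges of $\H(k,l)$, contradicting what we have shown.
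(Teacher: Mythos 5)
Your proof is correct and follows essentially the same inductive argument as the paper: the same base cases, and in the step a case split on the color of the root, invoking the hypothesis for $\H(k-1,l)$ or $\H(k,l-1)$ accordingly (the paper merely compresses the two cases into one ``without loss of generality'' the root is red). No gaps.
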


For completeness, here we include the proof of Lemma~\ref{hipergraf} from \cite{P10}.
The induction on two parameters, $k$ and $l$, is similar to the proof of Ramsey's theorem by Erd\hosszuo os and Szekeres \cite{ESz}.

\begin{proof}
We will prove that for every coloring of $V(k,l)$ with red and blue, there is an edge in $E_R(k,l)$ such that all of its $k$ vertices are red or an edge in $E_B(k,l)$ such that all of its $l$ vertices are blue.

Suppose first that $k=1$. If any vertex in $V(1,l)$ is red, then it is a red singleton edge in $\H(1,l)$. If all vertices in $V(1,l)$ are blue, then the (only) edge $V(1,l)\in E_B(1,l)$ contains only blue points. Analogously, the assertion is true if $l=1$.

Suppose next that $k, l >1$. Assume without loss of generality that the root $p$ is red. Consider the subhypergraph $\H(k-1,l)\subset\H(k,l)$ induced by the vertices in $V(k-1,l)$. If it has a monochromatic red edge $e\in E_R(k-1,l)$, then $e\cup\{p\}\in E_R(k,l)$ is red. If there is a monochromatic blue edge in $E_B(k-1,l)$, then we are again done, because it is also an edge in $E_B(k,l)$.
\end{proof}

\noindent  For other interesting properties of the hypergraphs $\H(k,l)$ related to hereditary discrepancy, see Matou\v sek \cite{M11}.

\section{Geometric realization of the hypergraphs $\H(k,l)$}\label{sec:finite}

The aim of this section is to establish the following weaker version of Theorem~\ref{thm:disc}'.

\medskip
\noindent{\bf Theorem~\ref{thm:disc}''.} {\em For every $m\ge 2$, there exists a {\em finite} point set $P=P(m)\subset\mathbb{R}^2$ and a {\em finite} family of unit disks $\C=\C(m)$ with the property that every member of $\C$ contains at least $m$ elements of $P$, and no matter how we color the elements of $P$ with two colors, there exists a disk in $\C$ such that all points in it are of the same color.}
\medskip

%\noindent In the next section, we show how to extend this construction to an infinite set $P^*$ meeting the requirements of Theorem~\ref{thm:disc}'. This will complete the proof of Theorem~\ref{thm:disc}.

\smallskip
We {\em realize} the hypergraph $\H(k,l)$ defined in Section~\ref{sec:hypergraph} with points and disks.
The vertex set $V(k,l)$ is mapped to a point set $P(k,l)\subset\mathbb{R}^2$, and the edge sets, $E_R(k,l)$ and $E_B(k,l)$, to families of open unit disks, $\C_R(k,l)$ and $\C_B(k,l)$, so that a vertex belongs to an edge if and only if the corresponding point is contained in the corresponding disk.
The geometric properties of this realization are summarized in the following lemma.

Given two unit disks $C,C'$, let $d(C,C')$ denote the distance between their centers.
We fix an orthogonal coordinate system in the plane so that we can talk about the {\em topmost} and the {\em bottommost} points of a disk.

\begin{lem}\label{lem:construction} For any positive integers $k, l$ and for any $\eps>0$, there is a finite point set $P=P(k,l)$ and a finite family of open unit disks $\C(k,l)=\C_R(k,l)\cupdot\C_B(k,l)$ with the following properties.
\begin{enumerate}
\item Any disk $C\in\C_R(k,l)$ (resp.\ $\C_B(k,l)$) contains precisely $k$ (resp.\ $l$) points of $P$.
\item For any coloring of $P$ with red and blue, there is a disk in $\C_R(k,l)$ such that all of its points are red or a disk in $\C_B(k,l)$ such that all of its point are blue.
In fact, $P$ and $\C(k,l)$ {\em realize} the abstract hypergraph $\H(k,l)$ in the above sense.
\item For the coordinates $(x,y)$ of any point from $P$, we have $-\eps<x<\eps$ and $-\eps^2<y<\eps^2$. %are at a distance less than $\eps$ from each other, moreover, their $y$-coordinates differ by at most $\eps^2$.
\item For the coordinates $(x,y)$ of the center of any disk from $\C_R(k,l)$, we have $-\eps<x<\eps$ and $-\eps^2<y-1<\eps^2$.
\item For the coordinates $(x,y)$ of the center of any disk from $\C_B(k,l)$, we have $-\eps<x<\eps$ and $-\eps^2<y+1<\eps^2$.
%\item For any two disks $C, C'\in\C_R(k,l)$ or $C,C'\in\C_B(k,l)$, we have  $d(C,C')<\eps$.
%\item For any two disks $C\in\C_R(k,l)$ and $C'\in\C_B(k,l)$, we have $2-\eps<d(C,C')<2+\eps$.
\item The topmost and the bottommost points of a disk $C\in\C(k,l)$ are not covered by the closure of any other member of $\C(k,l)$.
\end{enumerate}
\end{lem}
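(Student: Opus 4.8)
The plan is to construct $P(k,l)$ and $\C(k,l)$ by induction on $k+l$, mirroring the recursive definition of $\H(k,l)$, and to do the whole construction inside a thin horizontal sliver near the origin so that properties (3)--(5) come for free with a suitably small scaling parameter. The key geometric idea is that near a fixed point, say the topmost point of a unit disk, the boundaries of unit disks whose centers lie near the downward-pointing vertical behave essentially like (nearly vertical, slightly curved) arcs, and by the positive curvature of the circle the ``$x$-order'' of the centers and the containment pattern interact in a controlled, one-dimensional way. So I would first set up a \emph{local coordinate picture}: fix the target point region to be a tiny neighborhood of the topmost point of some reference disk, parametrize disks in $\C_R$ by the $x$-coordinate of their center (all centers at height $\approx 1$), disks in $\C_B$ by the $x$-coordinate of their center (all centers at height $\approx -1$), and points of $P$ by their $x$-coordinate (all at height $\approx 0$). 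With everything confined to a sliver of width $O(\eps)$ and height $O(\eps^2)$, the boundary of a unit disk restricted to this sliver is, up to lower-order terms, a parabola $y \approx \pm(1 - \tfrac12(x-x_0)^2)$, and I would make all quantitative claims in this parabolic approximation, absorbing the error into the choice of $\eps$.

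Next I would carry out the induction. For the base cases $k=1$ (and symmetrically $l=1$): $V(1,l)$ is an $l$-point set, realized by $l$ points clustered tightly on the $x$-axis; $E_B(1,l) = \{V(1,l)\}$ is realized by one blue disk (center near $(0,-1)$) chosen wide enough to catch all $l$ points, and each singleton in $E_R(1,l)$ by a tiny red disk (center near $(0,1)$) catching exactly one of them, with the $x$-coordinates of the points spread out enough that such isolating red disks exist. For the inductive step with $k,l>1$: recall $V(k,l) = V(k-1,l) \cupdot V(k,l-1) \cupdot \{p\}$. I would take a realization of $\H(k-1,l)$ and a realization of $\H(k,l-1)$, each built in its own even-thinner sliver, place the $\H(k-1,l)$-copy to the left and the $\H(k,l-1)$-copy to the right along the $x$-axis, and insert the root point $p$ between them. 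The red edges of type $e \cup \{p\}$ with $e \in E_R(k-1,l)$ are obtained by \emph{enlarging} each red disk of the $\H(k-1,l)$-copy (sliding/widening it) so that it additionally swallows $p$ but no point of the $\H(k,l-1)$-copy; the red edges from $E_R(k,l-1)$ are inherited unchanged; symmetrically for blue, the blue edges from $E_B(k-1,l)$ are inherited and the blue edges $e\cup\{p\}$ with $e\in E_B(k,l-1)$ are enlarged copies of blue disks of the right part that additionally grab $p$. The separation of the two sub-slivers must be large compared to their internal scales but small compared to $\eps$, which is arranged by a descending geometric sequence of scale parameters indexed by $k+l$.

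The main obstacle, and the step I would spend the most care on, is simultaneously guaranteeing properties (1) and (6) through the inductive enlargement: when I widen a red disk to capture the new root $p$, I must be sure it does not also accidentally capture any \emph{other} point (so that it still contains exactly $k$ points, not more), and I must preserve the invariant in (6) that the topmost and bottommost points of \emph{every} disk avoid the closures of all other disks. The first is handled by the curvature/one-dimensionality: because centers of red disks sit on a near-horizontal line at height $\approx 1$ and points sit at height $\approx 0$, "which points a red disk contains" is governed by an interval in the $x$-coordinate whose endpoints move continuously and monotonically as the center moves, so I can enlarge just enough to reach $p$ while the gap to the next point on the far side (the $\H(k,l-1)$ part) stays positive — this is exactly why $p$ is placed in the \emph{gap} between the two sub-slivers and why that gap is chosen wide relative to the sub-slivers. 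For property (6), I would note that the topmost point of a disk lies at its center shifted up by $1$, i.e.\ at height $\approx 2$ for red disks and height $\approx 0$ for blue disks; since red centers and blue centers live on two horizontal lines separated by $\approx 2$, a red disk's topmost point (height $\approx 2$) is far above every other disk, a red disk's bottommost point (height $\approx 0$) can only be endangered by blue disks but lies at an extreme $x$ where, again by strict convexity, it pokes out of every neighbor; and one checks the symmetric statements for blue. Making these "pokes out" claims precise is a finite case analysis in the parabolic model, and choosing $\eps$ (and the nested scale parameters) small enough at the end makes the genuine-circle errors negligible. I expect no conceptual difficulty beyond bookkeeping once the parabolic normal form and the nested-scales setup are fixed; the write-up cost is in the careful ordering of quantifiers ($\eps$, then scales, then the explicit coordinates).
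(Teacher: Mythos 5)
There is a genuine gap at the heart of your inductive step. The lemma requires all members of $\C(k,l)$ to be \emph{unit} disks (translates of one fixed disk), so you are not allowed to ``enlarge'' or ``widen'' a red disk so that it ``additionally swallows $p$''; and even if one reads your step as only \emph{sliding} individual disks toward $p$, this breaks the induction: a disk of the $\H(k-1,l)$-copy is moved relative to its own point set, so nothing guarantees it still contains exactly its original $k-1$ points of that copy (your induction hypothesis gives no quantified slack for such individual perturbations), and properties 4--6 would have to be re-established for the perturbed family. Note also that with your flat placement (both sub-copies and the root essentially on the line $y\approx 0$, separated only horizontally), a red disk centered at height $\approx 1$ above the left copy has squared distance $\approx a^2+1>1$ from $p$, so \emph{no} unmodified red disk would contain $p$ --- which is precisely what forces you into the illegitimate enlargement.

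The paper's construction avoids this entirely: it never touches individual disks, but rigidly translates the \emph{whole} sub-configuration $P(k-1,l)\cup\C(k-1,l)$ by one vector and $P(k,l-1)\cup\C(k,l-1)$ by another, placing their roots at $(-\eps/3,-\eps^2/10)$ and $(\eps/3,+\eps^2/10)$ while the new root $p$ sits at the origin. Within each copy all incidences are preserved automatically, and the membership of $p$ is decided by the interplay of the horizontal offset and the tiny \emph{vertical} offset: a horizontal displacement of $\eps/3$ raises the lower boundary of a unit disk by the sagitta $\approx\eps^2/18$, so the copies shifted \emph{down} by $\eps^2/10$ (the disks of $\C_R(k-1,l)$, resp.\ up for $\C_B(k,l-1)$) capture $p$, while the oppositely shifted ones miss it --- exactly matching the recursion for $\H(k,l)$. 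This asymmetric vertical micro-shift is the key idea your sketch is missing. A secondary inaccuracy: in your treatment of property 6 you claim a red disk's bottommost point ``can only be endangered by blue disks,'' but other red disks also have their lower boundary arcs at height $\approx 0$, so red--red pairs (within a copy, by the inductive hypothesis, and across copies, by an explicit distance estimate) must be checked as well, as the paper does.
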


Looking at our construction from ``far away'' the two families $\C_R$ and $\C_B$ look like two touching disks, with all points of $P$ very close to the touching point.
The segments connecting the centers of disks from different families are almost vertical with all members of $\C_R$ lying ``above'' all members of $\C_B$.
We prove the lemma by induction.
Most conditions are needed for the induction to go through.
Condition 6 is an exception: it will be used in Section~\ref{sec:plane}.

\begin{figure}%[p]
\vspace*{-2cm}
\begin{center}
\subfigure[Starting step: $\C(k,1)$.]{\label{fig:start}
\includegraphics[width=.5\textwidth]{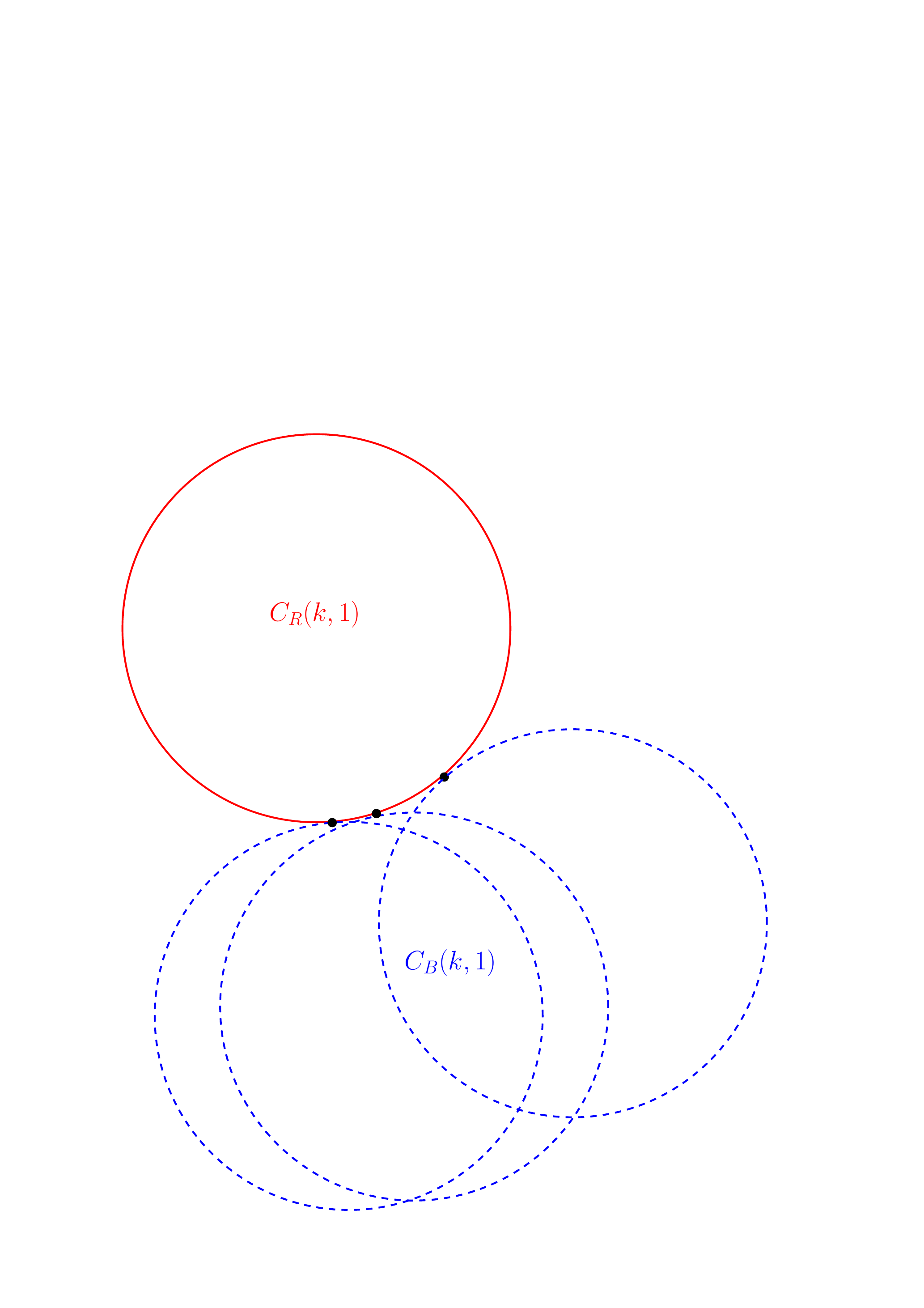}}
\hfill
\subfigure[$\C(2,2)$ magnified (and a bit distorted for visibility).]{\label{fig:22}
\includegraphics[width=.4\textwidth]{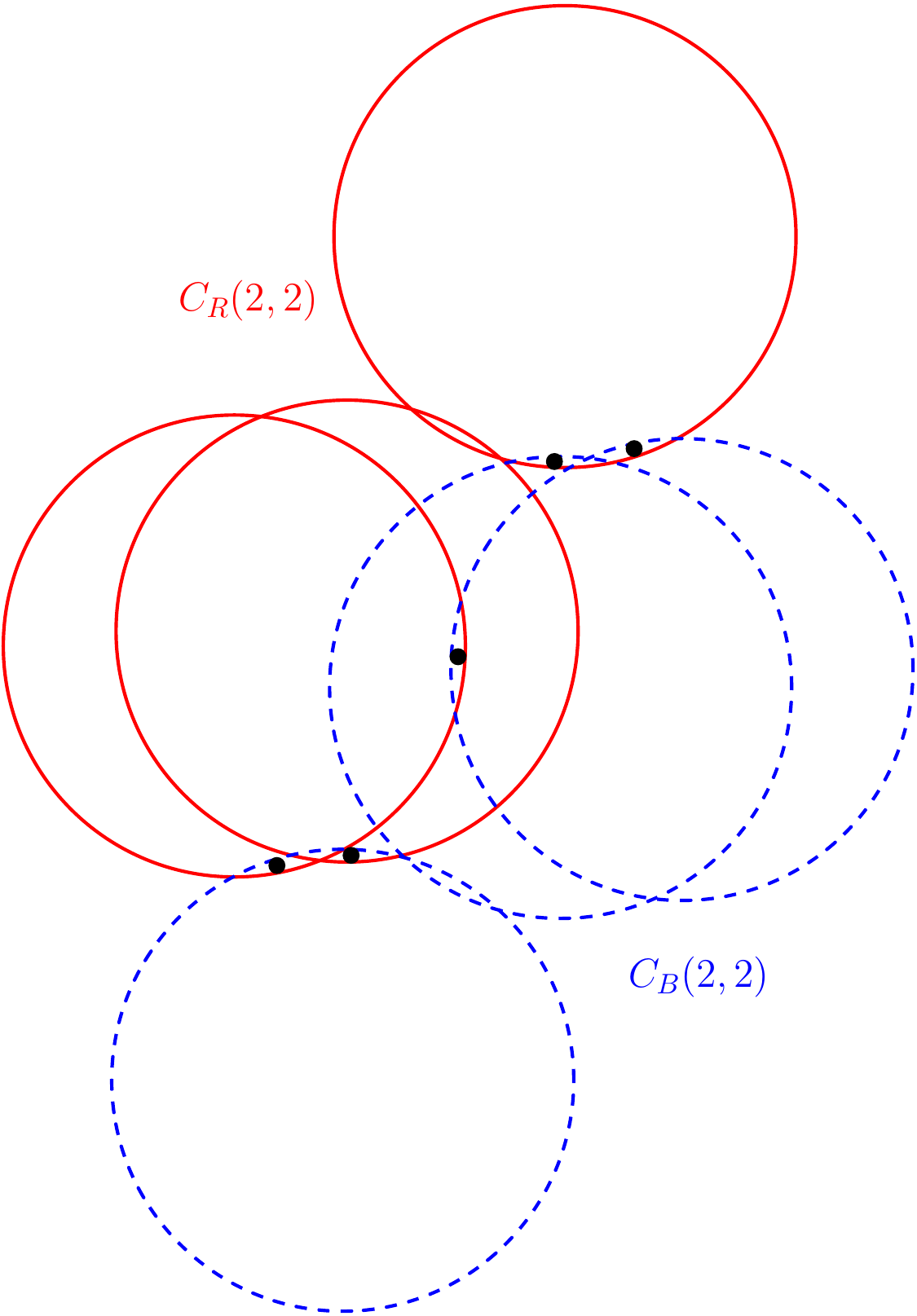}}
\subfigure[Induction step.]{\label{fig:ind}
\includegraphics[width=.9\textwidth]{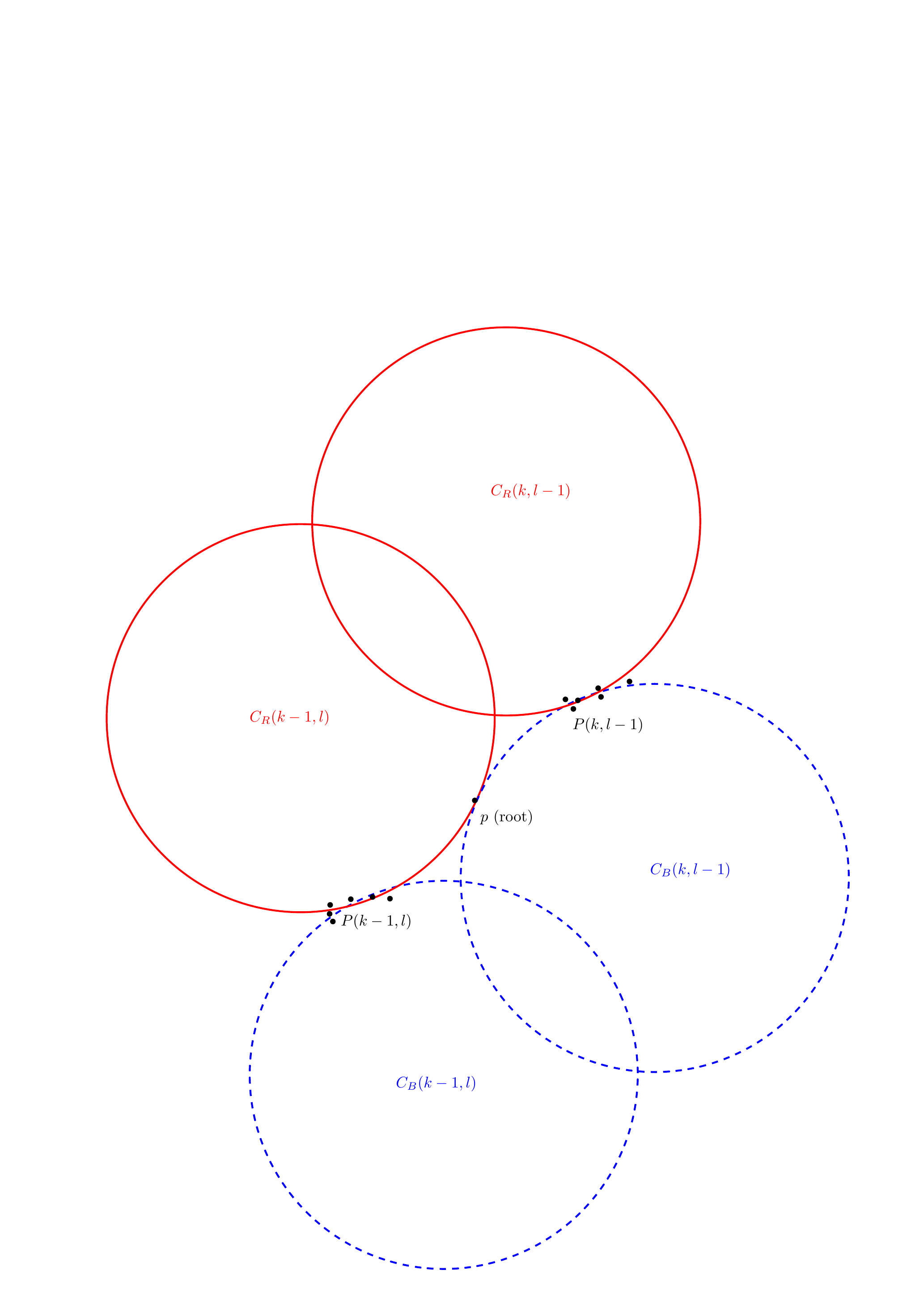}}
\caption{The construction.}\label{allfigs}
\end{center}
\end{figure}

%\afterpage{\addtocounter{page}{-1}} %HOGY A FIGURE OLDALT HAGYJA KI A SZAMOLASBOL

%The steps of the proof by induction are depicted in Figure~\ref{allfigs}, while the full proof %of Lemma~\ref{lem:construction}
%can be found in Appendix~\ref{app:lem:construction}.

\begin{proof}
We give a recursive construction.
We can assume that $\eps<1/10$.
It is easy to see that, for $k=1$ or $l=1$, there exists such a family of unit disks for any $\eps>0$, see Figure~\ref{fig:start}.
The family $\C(2,2)$ is depicted in Figure~\ref{fig:22}, where the main idea of the induction may already be visible.

Suppose that $k,l\ge 2$ and we have already constructed $P(k-1,l)$ and $\C(k-1,l)$, and $P(k,l-1)$ and $\C(k,l-1)$, for some $\eps(k-1,l)<\eps/100$ and $\eps(k,l-1)<\eps/100$, respectively.
To obtain $P(k,l)$, we place the root $p$ of $\H(k,l)$ into the origin $(0,0)$,
and we shift (translate) $P(k-1,l)$ and $P(k,l-1)$ into new positions such that their roots are at $(-\eps/3, -\eps^2/10)$ and $(\eps/3, \eps^2/10)$, respectively. With a slight abuse of notation, the shifted copies will also be denoted $P(k-1,l)$ and $P(k,l-1)$.
See Figure~\ref{fig:ind}.
In this way, it is guaranteed that for the coordinates $(x,y)$ of any point of $P$, we have
$$-\eps<-(\eps/3+\eps(k-1,l)+\eps(k,l-1))<x<\eps/3+\eps(k-1,l)+\eps(k,l-1)<\eps$$
and
$$-\eps^2<-(\eps^2/10+\eps^2(k-1,l)+\eps^2(k,l-1))<y<\eps^2/3+\eps^2(k-1,l)+\eps^2(k,l-1)<\eps^2.$$
Thus, {\bf property 3} of the lemma holds.
%All other properties will be verified by similar direct calculation.

The family $\C(k,l)$ is defined as the union of two previously defined families, $\C(k-1,l)$ and $\C(k,l-1)$, translated by the same vectors as $P(k-1,l)$ and, resp.\ $P(k,l-1)$ were. Again, we use the same symbols to denote the translated copies.
To verify {\bf properties 4 and 5}, we only have to repeat the above calculations, with the $y$-coordinates being shifted $1$ higher (resp.\ $1$ lower).
%By symmetry, it is enough to check that if $C\in \C_R(k,l-1)$ and $C'\in \C_R(k-1,l)$, then the distance between their centers is at most $\eps$.
%The coordinates of the center of $C$ are $\big(\eps/3\pm \eps(k,l-1), 1+\eps^2/10\pm \eps^2(k,l-1)\big)$ and the coordinates of the center of $C'$ are $\big(-\eps/3\pm \eps(k-1,l), 1-\eps^2/10\pm \eps^2(k-1,l)\big)$.
%Thus, their distance is at most $2\big(\eps/3 + \eps^2/10 + \eps(k,l-1)+ \eps(k-1,l)\big)<\eps,$ as required.

%To see that {\bf property 5} holds, we use that if $C\in \C_R(k,l)$, then the $y$-coordinate of its center is $1\pm \eps^2/10 \pm \max(\eps(k-1,l),\eps(k,l-1))$, and if $C'\in \C_B(k,l)$, then the $y$-coordinate of its center is at most $-1\pm \eps^2/10\pm\max(\eps(k-1,l),\eps(k,l-1))$.
%Hence, their distance is at least $2-\eps$ and at most $2+\eps$.

Now we show that our set of points $P(k,l)$ and set of disks $\C(k,l)$ realize the hypergraph $\H(k,l)$ ({\bf properties 1 and 2}).
It is easy to see that if $C\in \C_R(k-1,l)$ and $s\in P(k,l-1)$, then $s\notin C$ but $p=(0,0)\in C$.
The coordinates of the center of $C$ are $\big(-\eps/3\pm \eps(k-1,l), 1-\eps^2/10\pm \eps^2(k-1,l)\big)$ (where here and in the following, $\pm z$ denotes a number that is between $-z$ and $z$), so the distance of $p$ from $C$ is at most $(\eps/3+\eps(k-1,l))^2+(1-\eps^2/10+\eps^2(k-1,l))^2<1$.
On the other hand, the coordinates of $s$ are
$\big(\eps/3\pm \eps(k,l-1), \eps^2/10\pm \eps^2(k,l-1)\big)$, thus the square of its distance from the center of $C$ is at least
$$\big(2\eps/3-\eps(k-1,l)-\eps(k,l-1)\big)^2+\big(1-2\eps^2/10-\eps^2(k-1,l)-\eps^2(k,l-1)\big)^2>1.$$
Analogously, if $C\in \C_B(k,l-1)$ and $s\in P(k-1,l)$, then $s\notin C$ but $p=(0,0)\in C$.

Let $C\in \C_R(k,l-1)$ and $s\in P(k-1,l)$.
We prove that $p, s\notin C$.
The coordinates of the center of $C$ are
$\big(\eps/3\pm \eps(k,l-1), 1+\eps^2/10\pm \eps(k,l-1)\big)$.
Therefore, the distance of $p$ from the center of $C$ is at least $(\eps/3-\eps(k,l-1))^2+(1+\eps^2/10-\eps(k,l-1))^2>1$.
The calculation for $s$ is similar in the case $C\in \C_R(k-1,l)$.
Analogously, we have that if $C\in \C_B(k-1,l)$ and $s\in P(k,l-1)$, then $p, s\notin C$.
As the disks in $\C(k,l-1)$ (resp.\ $\C(k-1,l)$) contain precisely the same points of $P(k,l-1)$ (resp.\ $P(k-1,l)$, as before the shift, we have obtained a geometric realization of $\H(k,l)$, and properties 1 and 2 hold.

It remains to prove that the topmost and the bottommost points of a disk $C\in\C(k,l)$ are not covered by any other member of $\C(k,l)$ ({\bf property 6}).
Using that our construction and disks are centrally symmetric, it is enough to prove the statement for the topmost points.
If $C\in\C_R(k,l-1)$, the coordinates of its topmost point are
$\big(\eps/3\pm \eps(k,l-1), 2+\eps^2/10\pm \eps^2(k,l-1)\big)$.
If $C\in\C_R(k-1,l)$, the coordinates of its topmost point are
$\big(-\eps/3\pm \eps(k-1,l), 2-\eps^2/10\pm \eps^2(k-1,l)\big)$.
If $C\in\C_B(k,l-1)$, the coordinates of its topmost point are
$\big(\eps/3\pm \eps(k,l-1), -2+\eps^2/10\pm \eps^2(k,l-1)\big)$.
If $C\in\C_B(k-1,l)$, the coordinates of its topmost point are
$\big(-\eps/3\pm \eps(k-1,l), -2-\eps^2/10\pm \eps^2(k-1,l)\big)$.

If $C\in\C_R(k,l-1)$, by the induction hypothesis, its topmost point cannot be covered by any other disk from $\C(k,l-1)$.
Nor can it be covered by any other disk, as the topmost points of all other disks are below it (i.e., have smaller $y$-coordinates).
If $C\in\C_R(k-1,l)$, then the square of the distance of its topmost point from the center of some $C'\in \C_R(k,l-1)$ is at least
$$\big(2\eps/3 -\eps(k,l-1)-\eps(k-1,l)\big)^2 + \big(1-2\eps^2/10 -\eps^2(k,l-1)-\eps^2(k-1,l)\big)^2>1.$$
%$1+2\big(\eps/3 - \eps(k-1,l)-\eps^2/10-  \eps(k-1,l)\big)>1$.
If $C\in\C_B(k,l-1)$, then the distance of its topmost point from the center of some $C'\in \C_R(k-1,l)$
is also at least
$$\big(2\eps/3 -\eps(k,l-1)-\eps(k-1,l)\big)^2 + \big(1-2\eps^2/10 -\eps^2(k,l-1)-\eps^2(k-1,l)\big)^2>1.$$
In all other cases, trivially, the corresponding distances are also larger than $1$. 
This completes the proof of property 6 and hence the lemma.
\end{proof}

%EZ NEM IS IGAZ  MASODIK FELE
%\begin{remark} Note that all members of $\C_R$ have a point in common, and the same is true for $\C_B$. However, any point of the plane (not necessarily from $P$) that is covered by at least one disk from each of these families is covered at most $k+l$ times.
%\end{remark}

\section{Adding points to $P$ -- Proof of Theorem~\ref{thm:disc}'}\label{sec:plane}

In this section, we extend the proof of Theorem~\ref{thm:disc}'' to establish Theorem~\ref{thm:disc}' (which is equivalent to Theorem~\ref{thm:disc}). Note that the only difference between Theorems~\ref{thm:disc}'' and~\ref{thm:disc}' is that in the latter it is also required that every unit disk of the plane contains at least $m$ elements of the point set $P^*=P^*(m)$. The set $P=P(m,m)$ constructed in Lemma~\ref{lem:construction}, does not satisfy this condition. In order to fix this, we will add all points {\em not} in $\cup \C(m,m)$ to the set $P$ (or rather a sufficiently dense discrete subset of $\mathbb{R}^2\setminus\cup \C(m,m)$). In order to show that the resulting set $P^*$ meets the requirements of Theorem~\ref{thm:disc}', all we have to show is the following.

\begin{lem}\label{lem:kieg}
No (open) unit disk $C\notin \C(k,l)$ is entirely contained in $\cup \C(k,l)$.
\end{lem}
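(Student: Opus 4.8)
The plan is to exploit Property 6 of Lemma~\ref{lem:construction}, which says that the topmost and bottommost points of each disk in $\C(k,l)$ are not covered by the closure of any other member. First I would observe that the union $\cup\C(k,l)$ is an open set whose boundary consists of circular arcs, each such arc being a maximal portion of the boundary of some $C\in\C(k,l)$ that is not covered by any other member of the family. The topmost point of any $C\in\C(k,l)$ lies on the boundary of $\cup\C(k,l)$ (by Property 6 it is not even in the closure of any other disk), and similarly for the bottommost point; moreover, small neighborhoods of these points on $\partial C$ also lie on $\partial(\cup\C(k,l))$. Thus $\partial(\cup\C(k,l))$ contains, for every $C\in\C(k,l)$, an arc near its topmost point and an arc near its bottommost point.

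Now suppose for contradiction that some open unit disk $C\notin\C(k,l)$ satisfies $C\subseteq\cup\C(k,l)$. Consider the topmost point $t$ of $C$. Since $C$ is open, $t\notin C$, but every point of $C$ just below $t$ is in $\cup\C(k,l)$, so $t$ lies in the closure $\overline{\cup\C(k,l)}$; in fact, since a whole half-disk neighborhood of $t$ inside $C$ is covered, either $t$ is an interior point of $\cup\C(k,l)$ or $t\in\partial(\cup\C(k,l))$ with the boundary there locally matching $\partial C$ from inside. The key geometric fact I would use is that two distinct unit circles that are internally tangent must coincide, and two distinct unit circles can cross but cannot be tangent from the same side unless equal; so if $t$ is a boundary point of $\cup\C(k,l)$, the boundary arc through $t$ must come from a disk $C'\in\C(k,l)$ whose boundary circle is tangent to $\partial C$ at $t$ with $C'$ lying locally on the same side (below $t$) as $C$ — forcing $C'=C$, a contradiction since $C\notin\C(k,l)$. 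Hence $t$ must be an interior point of $\cup\C(k,l)$, i.e. $t\in C'$ for some $C'\in\C(k,l)$. But then I push $C$ upward: more precisely, I consider the family of translates $C+(0,s)$ for $s\ge 0$ and let $s^*$ be the supremum of those $s$ for which $C+(0,s)\subseteq\cup\C(k,l)$. By a compactness/openness argument $s^*$ is finite (the disks in $\C(k,l)$ all have centers with $y$-coordinate near $\pm 1$, so $\cup\C(k,l)$ is bounded) and $C+(0,s^*)$ is contained in $\overline{\cup\C(k,l)}$ but is not contained in $\cup\C(k,l)$. So $C+(0,s^*)$ touches $\partial(\cup\C(k,l))$ from inside at some point $q$; arguing as above, the boundary arc at $q$ belongs to some $C'\in\C(k,l)$ internally tangent to $C+(0,s^*)$ at $q$, forcing $C'=C+(0,s^*)$, hence $C+(0,s^*)\in\C(k,l)$. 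Then its topmost point, by Property 6, is not covered by any other member — contradicting that a neighborhood of it inside $C+(0,s^*)$ is covered by the rest of the family (which it must be, since $C+(0,s^*)$ is a translate of the original $C\notin\C(k,l)$ and the topmost point itself, being open-excluded, would need the covering to come from outside $C+(0,s^*)$).

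The main obstacle I expect is making the "internal tangency forces equality" step fully rigorous at a boundary point of $\cup\C(k,l)$: I need to rule out the possibility that $C$ (or its translate) escapes $\cup\C(k,l)$ only at a point lying on several boundary circles simultaneously (a corner of $\cup\C(k,l)$), or that the boundary there is formed by crossing rather than tangency. The clean way is to phrase it via the outward normal: at the extreme point $q$ of the pushed-up disk, the outward normal of $C+(0,s^*)$ points straight up, and if $q$ is on $\partial(\cup\C(k,l))$ then some $C'\in\C(k,l)$ has $q$ on its boundary with outward normal also pointing up there and $C'$ locally below — since unit circles are determined by a boundary point and the inward normal direction at that point, $C'=C+(0,s^*)$. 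I would state this as a short separate geometric observation and then the rest is the monotone-push argument plus a direct appeal to Property 6, with the boundedness of $\cup\C(k,l)$ (guaranteed by Properties 3–5) ensuring the supremum $s^*$ is attained and finite.
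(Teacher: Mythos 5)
The central step of your argument has a genuine gap. After the monotone push you obtain a point $q\in\partial\bigl(C+(0,s^*)\bigr)\cap\partial\bigl(\cup\C(k,l)\bigr)$, and you claim that the boundary of the union at $q$ must be an arc of a member internally tangent to the pushed disk, forcing that member to equal $C+(0,s^*)$. This is false: two members whose boundary circles pass through $q$ and \emph{cross} $\partial\bigl(C+(0,s^*)\bigr)$ transversally there can cover every point of the pushed disk in a small neighborhood of $q$ while leaving $q$ itself uncovered. Concretely, if $D^*$ is the unit disk centered at the origin, $q=(0,1)$, and $C_1',C_2'$ are unit disks centered at $(\pm\sin\alpha,\,1-\cos\alpha)$ for small $\alpha>0$, then $q\in\partial C_1'\cap\partial C_2'$, neither disk is tangent to $D^*$ at $q$, yet $C_1'\cup C_2'$ contains all points of $D^*$ near $q$. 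So the contact point may be a ``corner'' of $\cup\C(k,l)$, and your proposed repair via outward normals does not close the hole: the point $q$ produced by the supremum argument need not be the topmost point of the pushed disk, and even when it is, the configuration above shows no member need have a vertical normal at $q$. A secondary problem: even granting that $C+(0,s^*)\in\C(k,l)$, your final contradiction is not valid as stated, since a neighborhood of its topmost point inside $C+(0,s^*)$ is covered by $C+(0,s^*)$ itself, not necessarily by other members. (This part is fixable: if $s^*>0$, the \emph{bottommost} point of $C+(0,s^*)$ lies in $C+(0,s^*-\delta)\subseteq\cup\C(k,l)$ for small $\delta>0$, hence in some other member, contradicting property 6; if $s^*=0$ then $C\in\C(k,l)$ directly.)

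For comparison, the paper avoids any tangency or sliding argument altogether and proves the more general Lemma~\ref{lem:kieg'} for exposed families of translates of an arbitrary convex body: assuming $C\subseteq\cup\C$, pass to a minimal subcover, find a point of $C$ lying in three translates, observe that exposedness (property 6) together with $C\subseteq\cup\C$ forces the topmost and bottommost points of these translates to lie outside $C$, so $C$ meets each of them along a left or right boundary arc; by pigeonhole two of them meet $C$ with same-side arcs, minimality forces those arcs to cross inside $C$, and Claim~\ref{obs:nonsym} then says one translate's closure contains the other's topmost or bottommost point, contradicting exposedness. If you want to salvage your approach, you would need an argument handling corner contacts of the kind described above, which is exactly what the crossing-arc analysis in the paper supplies.
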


For future purposes, we prove this statement in a slightly more general form. In what follows, we only assume that $C$ is an open convex body with a unique topmost point $t$ and a unique bottommost point $b$, which divide the boundary of $C$ into two {\em closed arcs}. They will be referred to as the {\em left boundary arc} and a {\em right boundary arc}.

\begin{defi}\label{def:exposed} A collection $\C$ of translates of $C$ is said to be {\em exposed} if the topmost and bottommost points of its members do not belong to the closure of any other member of $\C$.
\end{defi}

By the last condition in Lemma~\ref{lem:construction}, the collections of disks $\C(k,l)$ constructed in the previous section are exposed. We prove the following generalization of Lemma~\ref{lem:kieg}.

\begin{lem}\label{lem:kieg'}
Let $\C$ be a finite exposed collection of translates of an open convex body $C$ with unique topmost and bottommost points. If $C\notin\C$, then $C\not\subseteq\cup\C$.
\end{lem}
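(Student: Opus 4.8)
The plan is to argue by contradiction: suppose $C$ is a translate of our convex body with topmost point $t$ and bottommost point $b$, and suppose $C\subseteq\cup\C$, yet $C\notin\C$. The key geometric idea is to trace one of the two boundary arcs of $C$ — say the right boundary arc $\gamma$ from $t$ down to $b$ — and track which member of $\C$ contains each point of $\gamma$. Since $\C$ is finite and each member is open, and $\gamma$ is a compact arc covered by $\cup\C$, I would like to find a point of $\gamma$ where the covering ``fails locally'' in the sense that no single translate covers a whole neighborhood of that point within $\gamma$, and derive a contradiction with the exposedness hypothesis.

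Here is the structure I would carry out. First, since $\cup\C$ is open and $C\subseteq\cup\C$, every point of the boundary $\partial C$ lies in the interior of some $C'\in\C$; in particular $t$ and $b$ do. Now parametrize the right arc $\gamma:[0,1]\to\partial C$ with $\gamma(0)=t$, $\gamma(1)=b$. Consider the set of parameters $s$ such that $\gamma([0,s])$ can be covered by finitely many members of $\C$ in a ``staircase'' fashion — more usefully, I would look at the point $b$ and walk \emph{upward}: let $C_1\in\C$ be a translate whose interior contains $b$. Since $b$ is the bottommost point of $C$ and $C_1$ is a translate of the same body, and $b\notin$ the closure of any member other than — wait, $b$ is a point of $C$, not necessarily of $C_1$'s distinguished pair, so exposedness applies to the topmost/bottommost points of the members of $\C$, not to $b$. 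So instead: because $C_1$ is a translate of $C$ and its interior contains $b$, while $b$ is the lowest point of $C$, the translate $C_1$ must lie ``lower'' than $C$ near $b$, i.e.\ the bottommost point $b_1$ of $C_1$ satisfies $b_1$ is strictly below $b$ or $C_1$ is a horizontal translate. In either case, as we move up along $\gamma$ from $b$, we eventually reach a point where $\gamma$ exits $C_1$; call it $q_1=\gamma(s_1)$. At that exit point $q_1\in\partial C_1$, and $q_1$ is not the topmost or bottommost point of $C_1$ (those are handled by convexity/position), so by exposedness $q_1$ \emph{is} covered — it lies in the interior of some other $C_2\in\C$, and one shows $C_2$ carries us strictly higher. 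Iterating, the sequence of exit heights strictly increases, so the process terminates, and it must terminate by reaching $t$; but then the last translate $C_N$ has $t$ in its interior, and a symmetric argument on the left arc, or a direct comparison of the two translates sharing the neighborhoods of $t$, forces $C_N=C$ or produces a translate equal to $C$, contradicting $C\notin\C$.

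The main obstacle — and the place where the hypotheses ``unique topmost/bottommost point'' and ``exposed'' are really used — is making precise the claim that crossing the right arc of $C$ transversally forces each successive covering translate to reach strictly higher, so that the induction on (a finite set of) exit heights actually terminates at $t$. This requires a clean lemma of the form: if $C'$ is a translate of $C$ whose interior meets $\gamma$ at a point $\gamma(s)$ which is an exit point of $\gamma$ from a previous translate, then $C'$ contains $\gamma([s,s'])$ for some $s'<s$ \emph{and} the exit parameter of $\gamma$ from $C'$ is strictly smaller than $s$ (i.e.\ strictly closer to $t$) — unless $\gamma(s')=t\in\partial C'$, in which case exposedness says $t$ is in the \emph{interior} of $C'$, forcing $C'$ to be positioned so that it contains a neighborhood of $t$ in $C$, and comparing topmost points via exposedness pins down $C'=C$. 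Handling the degenerate case where some $C'$ is a purely horizontal translate of $C$ (so its topmost point has the same height as $t$) is the fiddly point; here exposedness of $\C$ — the topmost point $t'$ of $C'$ is not in the closure of any other member — prevents $t'$ from lying in $C$, yet $C\subseteq\cup\C$ would need $t'\in\cup\C$ unless $t'=t$, and if $t'=t$ with $C'$ a horizontal translate of $C$ one gets $C'=C$ after all. Once this monotonicity-and-termination lemma is in hand, the finiteness of $\C$ closes the argument immediately.
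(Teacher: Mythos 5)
There is a genuine gap, and it occurs at the very first step on which the whole sweep rests: from ``$\cup\C$ is open and $C\subseteq\cup\C$'' you conclude that every point of $\partial C$ --- in particular $t$, $b$, and every point of the right arc $\gamma$ --- lies in some member of $\C$. This is a non sequitur: only the \emph{open} set $C$ is assumed covered, and finiteness of $\C$ only gives $\overline C\subseteq\bigcup_{C'\in\C}\overline{C'}$, i.e.\ boundary points of $C$ lie in \emph{closures} of members, which is useless for your argument (no open neighborhood, no strict progress). It is genuinely possible for open translates to cover all points of $C$ near $t$ while none of them contains $t$: for the unit disk, take disks whose boundary circles pass through $t$ with centers strictly below the level of $t$; each such open disk misses $t$ but contains points of $C$ arbitrarily close to it. Your fallback justification at the exit point $q_1$ --- ``by exposedness $q_1$ \emph{is} covered'' --- misreads the hypothesis: exposedness is a purely negative condition about the topmost/bottommost points \emph{of the members of $\C$} (they are not in the closure of any other member); it never guarantees that any point, least of all a point of $\partial C$, is covered. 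Beyond this, the endgame does not close: even if the walk terminated with some $C_N\in\C$ whose interior contains $t$, that does not force $C_N=C$ (one unit disk can easily contain the topmost point of another), so no contradiction with $C\notin\C$ is reached; and the ``monotonicity-and-termination lemma'' that you yourself flag as the main obstacle is exactly the part that is missing, with no argument supplied.

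For comparison, the paper's proof never looks at $\partial C$ and never tries to identify a member with $C$. It passes to a minimal subfamily covering $C$, finds a point of $C$ lying in three members $C_1,C_2,C_3$, and uses $C\subseteq\cup\C$ together with exposedness to show that no $C_i$ can have its topmost or bottommost point inside $C$ (such a point, not being in the open $C_i$ itself, would be covered by a different member, violating exposedness); hence $C$ meets the left or right boundary arc of each $C_i$, two of them --- say along right arcs --- by pigeonhole. Minimality forces those two right arcs to cross inside $C$, and a separate crossing claim (if the right arcs of two translates intersect, the closure of one contains the topmost or bottommost point of the other) then contradicts exposedness. If you want to rescue your sweep, you would have to run it strictly inside $C$, and at that point you end up needing precisely such a crossing statement anyway, so the proposal as written cannot be patched without importing the paper's key idea.
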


For the proof, we need a simple observation.

\begin{claim}\label{obs:nonsym} If the right boundary arcs of two translates of $C$ intersect, then
the closure of one of the translates must contain the topmost or bottommost point of the other.
\end{claim}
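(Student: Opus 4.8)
The plan is to prove Claim~\ref{obs:nonsym} by a continuity/ordering argument on the boundary arcs. First I would set up coordinates: let $C$ have topmost point $t$ and bottommost point $b$, and parametrize its right boundary arc from $b$ up to $t$. Since $C$ is convex with unique extreme points in the vertical direction, along this right arc the $y$-coordinate is strictly monotone increasing, so the right arc is the graph of a continuous, strictly monotone function $x = \rho(y)$ for $y$ ranging over the open vertical extent of $C$; similarly the left arc is a graph $x = \lambda(y)$ with $\lambda(y) < \rho(y)$ on the interior and $\lambda = \rho$ only at the two endpoints. For a translate $C + v$, the right arc is $x = \rho(y - v_2) + v_1$ over the shifted $y$-range.

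Next I would take two translates $C+u$ and $C+v$ whose right boundary arcs intersect at some point $q$, and derive the conclusion by comparing vertical extents. At height $y = q_2$, both right-arc functions agree: $\rho(q_2 - u_2) + u_1 = \rho(q_2 - v_2) + v_1$. The key step is to look at the vertical spans: the two translates have the same height $h = y(t) - y(b)$, and their spans are $[u_2 + y(b), u_2 + y(t)]$ and $[v_2 + y(b), v_2 + y(t)]$, which overlap since $q_2$ lies in both. Without loss of generality $u_2 \le v_2$, so the topmost point of $C+u$ has $y$-coordinate $u_2 + y(t) \le v_2 + y(t)$, i.e. it lies at a height inside the vertical span of $C+v$ (it is $\le$ the top of $C+v$ and, because the spans overlap at $q_2$, it is $\ge$ the bottom of $C+v$). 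So the topmost point $t_u$ of $C+u$ lies at a height where $C+v$ has a genuine horizontal cross-section $[\lambda(\cdot)+v_1, \rho(\cdot)+v_1]$. The remaining task is to show $t_u$ lies horizontally within that cross-section (or on its closure), which I would extract from the fact that the right arcs cross at $q$: at the crossing height the right arc of $C+u$ meets that of $C+v$, and since $t_u$ is the rightmost point of $C+u$ at a height at least as high, monotonicity of $\rho$ (it decreases as we move up past the equator, or rather the arc bulges) forces $t_u$ to be weakly left of the right arc of $C+v$ at its height; being the topmost point it is also trivially not left of the left arc once we are inside the span. Hence $t_u \in \mathrm{cl}(C+v)$.

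I expect the main obstacle to be handling the degenerate positions cleanly — in particular the case where the crossing point $q$ is itself an extreme point, or where $u_2 = v_2$ (equal heights), or where $t_u$ lands exactly on the boundary of $C+v$ — so that the statement comes out with the closure in the right place and I do not accidentally need strict containment. The cleanest way around this is probably to argue purely with the monotone graph functions $\rho, \lambda$ and the intermediate value theorem, treating "the right arc of $C+u$ starts weakly left of / ends weakly right of the right arc of $C+v$ at the overlap of their domains," so that a crossing forces one endpoint of $C+u$ to be captured; a short case split on the sign of $u_2 - v_2$ then says which endpoint ($t$ or $b$) and which translate. I would phrase the whole thing asymmetrically (note the claim itself is not symmetric — it only asserts one of the two translates captures an extreme point of the other) to avoid unnecessary casework, and lean on convexity only through the monotonicity of the arc parametrization, which is exactly the hypothesis we are granted on $C$.
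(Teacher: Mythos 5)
Your setup (right arcs as graphs over $y$, with the difference of the two graph functions monotone because the generating function is concave — note it is concave, not ``strictly monotone'' as you first write) correctly yields that the topmost point $t_u$ of the lower translate lies weakly to the \emph{left} of the right arc of the upper translate $C+v$. The gap is the very next step: the assertion that $t_u$ is ``trivially not left of the left arc once we are inside the span'' is not trivial, and in the generality in which the claim is stated (any open convex body with unique topmost and bottommost points — this generality is needed, since the claim feeds into Lemma~\ref{lem:kieg'} and is reused in Section~\ref{sec:smooth} for non-symmetric bodies) it is simply false, so the asymmetric strategy ``always capture the top of the lower translate'' cannot work. Concretely, let $C$ be the open triangle with vertices $(0,0)$ (bottommost), $(1,5)$ and $(-1,10)$ (topmost), so its right boundary arc is the path $(0,0)\to(1,5)\to(-1,10)$; take the translates $C+u$ with $u=(0,0)$ and $C+v$ with $v=(0.4,\,2.5)$. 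Their right arcs intersect: the edge from $(1,5)$ to $(-1,10)$ of $C+u$ meets the edge from $(0.4,2.5)$ to $(1.4,7.5)$ of $C+v$ at the point $(14/15,\,31/6)$. Here $u_2<v_2$, yet the topmost point $(-1,10)$ of the lower translate lies strictly to the left of $\mathrm{cl}(C+v)$ (whose left boundary at height $10$ is at $x=-0.35$), so $t_u\notin \mathrm{cl}(C+v)$; the claim holds in this example only through the other alternative, namely that the bottommost point $(0.4,2.5)$ of $C+v$ lies in $C+u$. (For a disk, or any centrally symmetric $C$, your intermediate statement happens to be equivalent to the claim, but even there your sketch gives no argument for the left-arc inequality.)

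So a correct proof along your lines must keep the disjunction alive: either $t_u$ clears the left arc of $C+v$ at its height, or one switches targets and shows that the bottommost point of the upper translate is captured by the lower one — and excluding the configuration in which both attempts fail is exactly the content of the claim, not something that can be dismissed. This is what the paper's argument is built to deliver: it closes each right arc with the chord joining the topmost and bottommost points, notes that the two resulting closed convex curves are translates of one another and hence must cross twice, while two translates of the same convex arc can cross at most once (this is essentially your monotone-difference observation), so at least one crossing involves the chord of one curve; that chord crossing the other curve places one of its endpoints — the topmost or bottommost point of that translate — in the closure of the other translate. The missing piece in your proposal is precisely this chord/parity step, or an explicit two-case analysis replacing it.
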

\begin{proof}
Let $C_1$ and $C_2$ be the two translates, and let $\gamma_i$ denote the closed convex curve formed by the right boundary arc of $C_i$ and the straight-line segment connecting its two endpoints (the topmost and the bottommost points of $C_i$). The curves $\gamma_1$ and $\gamma_2$ are translates of each other, and since they intersect, they must {\em cross} twice. %At least one of these crossings must occur between the two right boundary arcs, but 
(At a {\em crossing}, one curve comes from the exterior of the other, then it shares an arc with it, which may be a single point, and enters the interior.)
It cannot happen that both crossings occur between the right boundary arcs, because they are convex and translates of each other.
Therefore, one of the two crossings involves the straight-line segment of one the curves, say, $\gamma_1$.
But since the condition is that the right boundary arcs intersect, one of the two endpoints of this straight-line segment, either the topmost or the bottommost point of $C_1$, lies in the closure of $C_2$
%If neither of these sets contains the topmost or the bottommost point of the other, then their right boundaries should {\em cross} at least twice, which is impossible, because they are translates of each other.
\end{proof}

\begin{proof}[Proof of Lemma~\ref{lem:kieg'}.] Suppose, for contradiction, that $C\subseteq\cup\C$. By removing some members of $\C$ if necessary, we can assume that $\C$ is a {\em minimal} collection of translates that covers $C$. Then $C$ must have a point which belongs to (at least) three translates, $C_1, C_2, C_3\in\C$.
None of the topmost and bottommost points of these translates can be covered by $C$, otherwise, it would also be covered by another member of $\C$, contradicting the assumption that $\C$ is exposed.

Thus, $C$ intersects either the left or the right boundary arc of every $C_i$.
Without loss of generality, suppose that $C$ intersects the right boundary arcs of $C_1$ and $C_2$.
These right boundary arcs must intersect inside $C$, otherwise $C_1\cap C\subseteq C_2\cap C$ or $C_2\cap C\subseteq C_1\cap C$, and $\C$ would not be minimal.
Therefore, we can apply Claim~\ref{obs:nonsym} to conclude that one of them must contain the topmost or bottommost point of the other.
\end{proof}

\begin{remark} In the construction described in Lemma~\ref{lem:construction}, every disk in $\C(m,m)$ contains at most $|P(m,m)|<2^{2m}$ points. At the last stage, we added many new points to $P$. We can keep the maximum number of points of $P$ lying in a unit disk bounded from above by a function $f(m)$. What is the best upper bound? The bound given by our construction depends on $\eps(m,m)\le 100^{-2m}\eps(1,1)$.
\end{remark}

\section{Other convex bodies -- Proof of Theorem~\ref{thm:smooth}}\label{sec:smooth}

Throughout this section, $C$ denotes an open plane convex body which has two parallel supporting lines with positive curvature at the two points of tangencies. To prove Theorem~\ref{thm:smooth}, by duality, it is sufficient to establish the analogue of Theorem~\ref{thm:disc}', where the role of unit disks is played by translates of $C$.

\medskip
\noindent{\bf Theorem~\ref{thm:smooth}'.} {\em For every $m\ge 2$, there is a set of points $P^*=P^*(m)$ in the plane with the property that every translate of $C$ contains at least $m$ elements of $P^*$, and no matter how we color the elements of $P^*$ with two colors, there exists a translate of $C$ such that all points in it are of the same color.}
\medskip

As in the case of disks, after defining the hypergraphs $\H(k,l)$, the proof consists of two steps:

\smallskip
{\bf Step 1:} We find a geometric realization of $\H=\H(k,l)$ with translates of $C$, i.e., a finite point set $P$ representing the vertices and a collection $\C$ of translates of $C$ representing the hyperedges of $\H$ such that a point of $P$ lies in a member of $\C$ if and only if the corresponding vertex belongs to the corresponding hyperedge.
We show that $\C$ is an exposed family.

\smallskip
{\bf Step 2:} We show that no translate of $C$ is entirely contained in $\cup\C$, unless $C\in\C$. Thus, we can add all the points not in $\cup \C(k,l)$ to the points of $P$ to ensure that every translate of $C$ contains many points.
\smallskip

In Section~\ref{sec:plane}, we have shown that Step 2 can be completed, provided that $\C$ is exposed (see Lemma~\ref{lem:kieg'}). Therefore, here we concentrate on Step 1.

Without loss of generality, we can assume that $C$ has unique bottommost and topmost points, $b$ and $t$, resp., at which the curvature is positive.
After applying an affine transformation, we can also attain that the line $bt$ is vertical. %and the distance of $b$ and $t$ is $2$.
%We call the midpoint of the $\bar{bt}$ segment the center of $C$.
%We denote the inverse of the curvature at $b$ and $t$, respectively, by $r_b$ and $r_t$.
Let $r_b$ and $r_t$ denote the reciprocals of the curvatures at $b$ and $t$, respectively.
If we place $b$ at the origin, then, for every $\delta>0$, in a small neighborhood of $b$, the boundary of $C$ will lie between the parabolas $y=(1-\delta)r_bx^2$ and $y=(1+\delta)r_bx^2$.
Analogously, if we place $t$ at the origin, then in a small neighborhood of it, the boundary of $C$ will lie between the parabolas $y=-(1-\delta)r_tx^2$ and $y=-(1+\delta)r_tx^2$.
We find a geometric realization using the following lemma.

\begin{figure}[h]
\centering
\includegraphics[width=10cm]{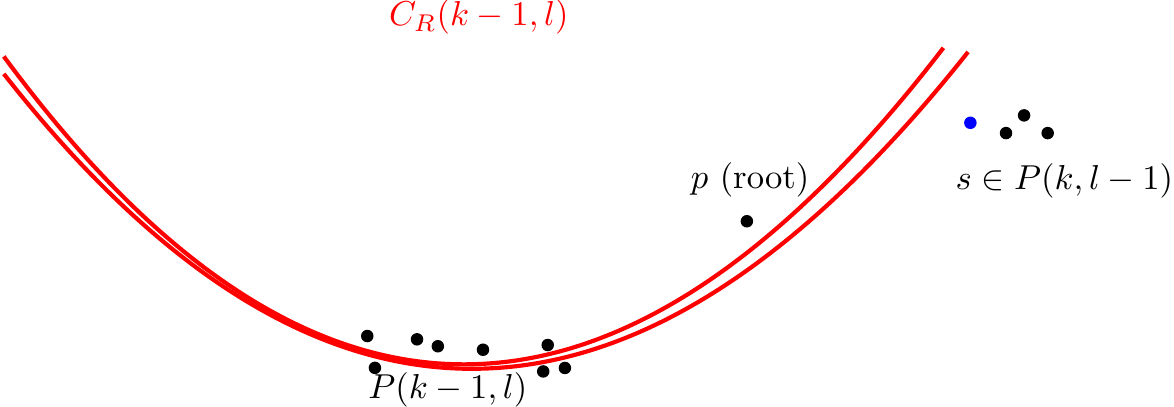}
\caption{Parabolas enclosing the boundary of $C$.}\label{fig:smooth}
\end{figure}

\medskip
\noindent{\bf Lemma~\ref{lem:construction}'.} {\em For any positive integers $k, l$ and for any $\eps>0$, there is a finite point set $P=P(k,l)$ and a finite family of translates of $C$, $\C(k,l)=\C_R(k,l)\cupdot\C_B(k,l)$ with the following properties.
\begin{enumerate}
\item Any translate from $\C_R(k,l)$ (resp.\ $\C_B(k,l)$) contains precisely $k$ (resp.\ $l$) points of $P$.
\item For any coloring of $P$ with red and blue, there is a translate from $\C_R(k,l)$ such that all of its points are red or a translate from $\C_B(k,l)$ such that all of its point are blue.
In fact, $P$ and $\C(k,l)$ {\em realize} the abstract hypergraph $\H(k,l)$ in the above sense.
\item For the coordinates $(x,y)$ of any point from $P$, we have $-\eps<x<\eps$ and $-\eps^2<y<\eps^2$. %are at a distance less than $\eps$ from each other, moreover, their $y$-coordinates differ by at most $\eps^2$.
\item For the coordinates $(x,y)$ of the bottommost point of any translate from $\C_R(k,l)$, we have $-\eps<x<\eps$ and $-\eps^2<y<\eps^2$.
\item For the coordinates $(x,y)$ of the topmost point of any translate from $\C_B(k,l)$, we have $-\eps<x<\eps$ and $-\eps^2<y<\eps^2$.
%\item For any two disks $C, C'\in\C_R(k,l)$ or $C,C'\in\C_B(k,l)$, we have  $d(C,C')<\eps$.
%\item For any two disks $C\in\C_R(k,l)$ and $C'\in\C_B(k,l)$, we have $2-\eps<d(C,C')<2+\eps$.
\item The topmost and the bottommost points of translate from $\C(k,l)$ are not covered by the closure of any other member of $\C(k,l)$.
\end{enumerate}
}

\begin{proof}
Using an affine transformation, we can suppose that $r_t,r_b<1$.
We fix a $\delta$ that is small enough compared to $r_t$ and $r_b$, and an $\eps=\eps(k,l)$ that is small enough compared to $\delta$, $r_t$ and $r_b$, but big enough compared to $\eps(k,l-1)$ and $\eps(k-1,l)$.
(To keep the presentation simple, we omit the exact required dependencies here.)
We will use that the boundary of $C$ in a $2\eps(r_t+r_b)$ neighborhood around $t$ and $b$ is between the (above mentioned) pairs of parabolas, $y=(1-\delta)r_bx^2$ and $y=(1+\delta)r_bx^2$, and $y=-(1-\delta)r_tx^2$ and $y=-(1+\delta)r_tx^2$.

If $k=1$ or $l=1$, the construction is trivial.
For $k,l\ge 2$, assume that the point sets $P(k-1,l)$ and $P(k,l-1)$, and the families of translates of $C$, $\C(k-1,l)$ and $\C(k,l-1)$, have already been defined, and that they satisfy all conditions in the lemma.
To obtain $P(k,l)$, we place the root $p$ of $\H(k,l)$ into the origin $(0,0)$, and we shift $P(k-1,l)$ and $P(k,l-1)$ such that their roots are at $(-r_t\eps, -(1+2\delta)r_br_t^2\eps^2)$ and $(r_b\eps, (1+2\delta)r_tr_b^2\eps^2)$, respectively.
The family of translates $\C(k,l)$ is defined as the union of the families $\C(k-1,l)$ and $\C(k,l-1)$ translated by the same vectors, as $P(k-1,l)$ and $P(k,l-1)$, respectively.

To verify {\bf properties 3, 4, and 5}, we need that $-\eps<-r_t\eps,r_b\eps<\eps$ and $-\eps^2<-(1+2\delta)r_br_t^2\eps^2,(1+2\delta)r_tr_b^2\eps^2<\eps^2$, which hold since $r_t,r_b<1$ and $\delta$ is small.
Notice that where we have omitted $\eps(k,l-1)$ and $\eps(k-1,l)$ from these equations to keep the calculations simple.
This we can do as the difference of the two sides depends on $\eps$, which we can select to be sufficiently large compared to $\eps(k,l-1)$ and $\eps(k-1,l)$.
We will also omit dependencies of $\eps(k,l-1)$ and $\eps(k-1,l)$ later.

To verify {\bf properties 1 and 2}, we have to show that for any $C\in \C_R(k-1,l)$, the origin $p=(0,0)$ belongs to $C$, but no point $s\in P(k,l-1)$ does, provided that $\eps>0$ is sufficiently small.
To see this, fix $C\in \C_R(k-1,l)$.
The equation of the parabola that touches $C$  from the inside at its bottommost point is approximately $y=(1+\delta)r_b(x+r_t\eps)^2-(1+2\delta)r_br_t^2\eps^2$.
If $x=0$, %and $\delta$ is large enough (compared to $\eps$),
the value of $y$ is $(1+\delta)r_b(r_t\eps)^2-(1+2\delta)r_br_t^2\eps^2=-\delta r_br_t^2\eps^2$.
This is negative, which means that $p=(0,0)$ lies above the parabola.
Thus, we have $p\in C$.
Analogously, if $C\in \C_B(k,l-1)$ and $s\in P(k-1,l)$, then $s\notin C$ but $p=(0,0)\in C$.

On the other hand, the equation of the parabola that touches $C$ at its bottommost point from the outside is approximately
$y=(1-\delta)r_b(x+r_t\eps)^2-(1+2\delta)r_br_t^2\eps^2$.
If $x=r_b\eps\pm \eps(k,l-1)$ %and $\delta$ is big enough (compared to $\eps$),
the value of $y$ at $x$ is approximately
$$(1-\delta)r_b(r_b\eps+r_t\eps)^2-(1+2\delta)r_br_t^2\eps^2=\left((1-\delta) (r_b^3+2r_b^2r_t)-3\delta r_br_t^2\right)\eps^2
\ge \left(r_b^3+O(\delta)\right)\eps^2.$$
%\ge \left(r_tr_b^2\eps^2+O(\delta)\right)\eps^2.$$
%This is bigger than $r_tr_b^2\eps^2\pm \eps(k,l-1)$ as the main term of their difference ($\delta$ is small enough compared to $r_t,r_b$) is $2r_br_t\eps^2$
Therefore, $s=(r_b\eps\pm \eps(k,l-1), (1+2\delta)r_tr_b^2\eps^2\pm \eps^2(k,l-1))$ is below the parabola, if $\delta$ is small enough, thus $s\notin C$.

Let $C\in \C_R(k,l-1)$ and $s\in P(k-1,l)$.
We prove that $p, s\notin C$.
The equation of the parabola that touches $C$ from the outside at its bottommost point is approximately $y=(1-\delta)r_t(x-r_b\eps)^2-(1+2\delta)r_tr_b^2\eps^2$.
If $x=0$, the value of $y$ is $(1+\delta)r_t(-r_b\eps)^2-(1+2\delta)r_tr_b^2\eps^2=-\delta r_br_t^2\eps^2<0$, thus $p\in C$.
The calculation for $s$ is similar in the case $C\in \C_R(k-1,l)$.
Analogously, we have that if $C\in \C_B(k-1,l)$ and $s\in P(k,l-1)$, then $p, s\notin C$.
As the translates in $\C(k,l-1)$ (resp.\ $\C(k-1,l)$) contain precisely the same points of $P(k,l-1)$ (resp.\ $P(k-1,l)$, as before the shift, we have obtained a geometric realization of $\H(k,l)$, and properties 1 and 2 hold.

It remains to prove that the topmost and the bottommost points of a translate $\C(k,l)$ are not covered by any other member of $\C(k,l)$ ({\bf property 6}).
Using that our construction is symmetric, it is enough to prove the statement for the topmost points.
Recall that the line connecting $b$ and $t$ is vertical and denote their distance, the height of $C$, by $h$.  

The coordinates of the topmost points of translates from $\C_R(k,l-1)$ are approximately
$(r_b\eps+h, (1+2\delta)r_tr_b^2\eps^2+h)$.
The coordinates of the topmost points of translates from $\C_R(k-1,l)$ are approximately
$(-r_t\eps+h, -(1+2\delta)r_br_t^2\eps^2+h)$.
The coordinates of the topmost points of translates from $\C_B(k,l-1)$ are approximately
$(r_b\eps, (1+2\delta)r_tr_b^2\eps^2)$.
The coordinates of the topmost points of translates from $\C_B(k-1,l)$ are approximately
$(-r_t\eps, -(1+2\delta)r_br_t^2\eps^2)$.

If $C_1\in \C_R(k,l-1)$, by the induction hypothesis, its topmost point cannot be covered by any other $C_2\in \C(k,l-1)$.
Nor can it be covered by any other translate, as the topmost points of all other translates are below it (i.e., have smaller $y$-coordinates).
If $C_1\in\C_R(k-1,l)$, then the vector connecting it to the topmost point of some $C_2\in \C_R(k,l-1)$ is approximately the same as the vector connecting a point $s\in P(k-1,l)$ to the topmost point of some $C'\in \C_B(k,l-1)$.
As we have seen earlier that $s\notin C'$, the same calculation shows that the topmost point of $C_1$ is not in $C_2$.
If $C_1\in\C_R(k-1,l)$ and $C_2\in\C_B(k,l-1)$ or $C_2\in\C_B(k-1,l)$, then the topmost point of $C_2$ is lies below the topmost point of $C_1$.
If $C_1\in\C_B(k,l-1)$ and $C_2\in \C(k,l-1)$, by induction the topmost point of $C_1$ is not in $C_2$.
If $C_1\in\C_B(k,l-1)$ and $C_2\in \C(k-1,l)$, then the topmost point of $C_1$ is approximately at the same place as the points of $P(k,l-1)$ which are avoided by $C_2$, and the same calculation works here.
Similarly, if $C_1\in\C_B(k-1,l)$ and $C_2\in \C(k-1,l)$, we can use induction, and if $C_1\in\C_B(k-1,l)$ and $C_2\in \C(k,l-1)$, we can use that the topmost point of $C_1$ is approximately at the same place as the points of $P(k-1,l)$ which are avoided by $C_2$, the same calculation works here.
This completes the proof of property 6 and hence the lemma.
\end{proof}

\section{Shift-chains -- Proof of Theorem~\ref{thm:unbounded}}\label{sec:shiftchain}

Throughout this section, $P$ denotes a fixed set of $n$ points in the plane, no two of which have the same $x$-coordinate, and $C$ is a fixed open convex set that contains a vertical upward half-line.

\begin{defi}
For $A\subset [n]=\{1,2,\ldots,n\}$, denote by $a_i$ the $i^{th}$ smallest element of $A$.
For two equal sized sets, $A,B\subset [n]$, we write $A\preceq B$ if $a_i\le b_i$ for every $i$.

An $m$-uniform hypergraph on the vertex set $[n]$ is called a {\em shift-chain} if its hyperedges are totally ordered by the relation $\preceq$.
A shift-chain ${\mathcal H}$ is {\em special} if for any two hyperedges, $A,B\in\H$ with $A\preceq B$, we have $\max(A \setminus B)<\min(B\setminus A)$.
\end{defi}

\noindent For any integer $m$ and real number $x$, let $C(m;x)$ denote the translate of $C$ which

 {\bf a.} contains exactly $m$ points of $P$,

 {\bf b.} can be obtained from $C$ by translating it through a vector with $x$-coordinate $x$,

 {\bf c.} and has minimum $y$-coordinate, among all translates satisfying {\bf a} and {\bf b}.

\noindent The union of all translates of $C$ through every vector that has $x$-coordinate $x$ is a vertical strip (or an open half-plane or the whole plane), denoted by $S(x)$. If $S(x)$ contains precisely $m$ points for some $x$, then in condition {\bf c}, the minimum $y$-coordinate is $y=-\infty$, and we set $C(m;x)=S(x)$. If $S(x)$ contains fewer than $m$ points, then $C(m;x)$ is undefined.

\begin{prop}\label{obs:geom2shift} Let $p_1, p_2, \ldots, p_n$ denote the elements of $P$, listed in the increasing order of their $x$-coordinates.
Then the $m$-uniform hypergraph consisting of the sets $P(x)=\{ i\in [n] \, ; \, p_i\in C(m;x)\}$, over all $x\in\mathbb{R}$, is a special shift-chain.
\end{prop}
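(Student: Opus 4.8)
The plan is to verify the two defining properties of a special shift-chain directly from the geometry of the translates $C(m;x)$. First, each $P(x)$ is an $m$-element subset of $[n]$ by condition {\bf a} (with the degenerate case $C(m;x)=S(x)$ contributing an $m$-element set as well), so the family is $m$-uniform. The substance is to show (i) that the sets $\{P(x) : x\in\R\}$ are totally ordered by $\preceq$, and (ii) that consecutive-in-$\preceq$ edges $A\preceq B$ satisfy $\max(A\setminus B)<\min(B\setminus A)$; in fact it is cleanest to prove the stronger statement that this last inequality holds for \emph{every} pair $A=P(x)\preceq B=P(x')$, which simultaneously gives both (i) and the ``special'' condition once one knows the edges are comparable.

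The key geometric input is that $C$ contains a vertical upward half-line, so each translate $C(m;x)$ is an upward-unbounded convex set whose intersection with the vertical line through any point is an upward ray (or empty); consequently, within the strip $S(x)$, membership of $p_i$ in $C(m;x)$ depends only on whether $p_i$ lies above the lower boundary curve of $C(m;x)$. The first step is therefore: fix $x<x'$ and compare $C(m;x)$ with $C(m;x')$. I would argue that as $x$ varies the strips $S(x)$ move monotonically to the right, and the ``lowest translate capturing exactly $m$ points'' also moves monotonically in the following sense: if we sweep the translate downward until it contains exactly $m$ points, the set of captured indices is an ``interval-like'' set in the $x$-order, and moving the strip right both drops some small-index points out of $S(x')$ and lets the translate sink lower on the right, picking up larger-index points. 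Made precise, this says $P(x)\preceq P(x')$ when $x\le x'$ — so the family is linearly ordered, indexed monotonically by $x$.

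The heart of the argument — and the step I expect to be the main obstacle — is establishing $\max(A\setminus B)<\min(B\setminus A)$ for $A=P(x)$, $B=P(x')$ with $x<x'$. The natural approach is a crossing/convexity argument on the two lower boundary curves of $C(m;x)$ and $C(m;x')$: these are translates of the same convex arc, so their graphs cross at most once; combined with the fact that the two strips $S(x)\supseteq$-ish relate by a horizontal shift, one deduces that the symmetric difference splits cleanly into ``points lost on the left'' (all with small index, those in $A\setminus B$) and ``points gained on the right'' (all with large index, those in $B\setminus A$), with no interleaving. One must handle the degenerate cases where one or both of $C(m;x),C(m;x')$ equals a strip $S(\cdot)$, and the boundary bookkeeping about which translate is ``lower'' where — here the single-crossing property of translated convex curves, essentially the same observation as in Claim~\ref{obs:nonsym}, is what prevents the two captured index-sets from being shuffled together. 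Once the non-interleaving is shown, both the total order and the special property follow, completing the proof.
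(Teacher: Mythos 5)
Your proposal is correct and follows essentially the same route as the paper: the paper's entire proof is the observation that for $x<x'$ the boundaries of $C(m;x)$ and $C(m;x')$ cross exactly once, so every point of $(C(m;x)\setminus C(m;x'))\cap P$ lies to the left of every point of $(C(m;x')\setminus C(m;x))\cap P$, which simultaneously yields $P(x)\preceq P(x')$ and the special condition --- precisely your single-crossing/non-interleaving argument. (Your heuristic remark that the captured index set is ``interval-like'' is not literally true and is not needed; the crossing argument you describe is the actual proof.)
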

\begin{proof}
Notice that if %$P(x)\neq P(x')$ for some
$x<x'$, then the boundary of $C(m;x)$ intersects the boundary of $C(m;x')$ precisely once. Therefore, every element of $(C(m;x)\setminus C(m;x'))\cap P$ is to the left of all elements of $(C(m;x')\setminus C(m;x))\cap P$. This means that $P(x)\preceq P(x')$.
\end{proof}

In view of the duality described at the end of the introduction, Theorem~\ref{thm:unbounded} is an immediate corollary of the following statement.

\begin{thm}\label{thm:shiftchain} For any $m\ge 3$, every $m$-uniform special shift-chain is $2$-colorable. Moreover, such a coloring can be constructed in linear time.
\end{thm}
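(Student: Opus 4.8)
The statement to prove is that every $m$-uniform special shift-chain is $2$-colorable for $m\ge 3$, constructively in linear time. Since a shift-chain is totally ordered by $\preceq$, I would think of the hyperedges as indexed $A_1\preceq A_2\preceq\cdots\preceq A_N$, and of the ``sliding window'' picture: as we pass from one hyperedge to the next, the set changes by swapping out some elements for an equal number of larger elements, and the \emph{special} condition forces these swaps to be ``clean'' in the sense that everything leaving is below everything entering. The goal is to produce a red/blue coloring of $[n]$ so that no $A_j$ is monochromatic. My plan is to build the coloring greedily from left to right (i.e.\ in order of $x$-coordinate / vertex index), maintaining an invariant that records, for the hyperedges currently ``active'' at a given vertex, how close they are to becoming monochromatic, and always coloring the next vertex so as to defuse the most endangered edge.

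\textbf{Key steps.} First I would set up the combinatorial structure: because of specialness, associate to each vertex $i$ the interval of hyperedge-indices $j$ for which $i\in A_j$ — I expect this to be a contiguous interval (a consequence of $\preceq$ together with $\max(A\setminus B)<\min(B\setminus A)$), which is exactly what makes the ``chain'' behave like a family of intervals and is the reason linear time is plausible. Second, I would reduce to a cleaner combinatorial game: we read vertices $1,2,\dots,n$ in order; at each step a set of edges ``closes'' (loses this vertex as its last member) and we must have already guaranteed each closing edge is bichromatic, while we also learn which edges remain open. The coloring rule: give vertex $i$ the color that is currently \emph{missing} from the open edge that is closest to closing (ties broken by some fixed rule), and argue this is always possible. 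Third — the crux — I would prove by an exchange/potential argument that $m\ge 3$ gives enough slack: each edge has $m\ge 3$ vertices, and the specialness condition guarantees that consecutive edges overlap in all but a ``prefix/suffix'' block, so once an edge has received both colors among its first few vertices it stays safe; one shows that the greedy rule never gets stuck because two edges demanding opposite colors at the same vertex would, via specialness, force a contradiction on their relative order. Finally I would observe that all bookkeeping (the intervals, the ``closest to closing'' edge, updating after each vertex) is doable with a linear-time sweep, e.g.\ using a pointer that only moves forward.

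\textbf{Where the difficulty lies.} The main obstacle is the correctness of the greedy step: showing that the color forced by the most-endangered edge never conflicts with a constraint imposed by another edge, and that $m=3$ (not $m=2$) is exactly the threshold that makes this work. This is where the \emph{special} hypothesis must be used in an essential way — I expect the argument to hinge on the lemma that if $A\preceq B$ and both are ``active'' at vertex $i$, then the symmetric-difference structure $\max(A\setminus B)<\min(B\setminus A)$ prevents $A$ from needing color red at $i$ while $B$ needs color blue at $i$ unless one of them has already been satisfied earlier. A secondary subtlety is handling the degenerate hyperedges $C(m;x)=S(x)$ (infinite strips) from Proposition~\ref{obs:geom2shift}, but these correspond to edges that are order-extremal in the chain and can be dealt with first or last. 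I would also double-check the base of the sweep (the first $m-1$ vertices, before any edge can close) and make sure the linear-time claim survives the tie-breaking.
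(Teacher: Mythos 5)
The decisive step of your plan --- that the left-to-right greedy rule (``color $i$ with the color missing from the most endangered open edge containing $i$'') can never be trapped --- is precisely the content of the theorem, and you only assert it (``one shows that the greedy rule never gets stuck\dots''); no invariant, exchange argument, or case analysis is actually supplied. Moreover, the structural claim on which the whole plan rests is false: in a special shift-chain the set of hyperedges containing a fixed vertex need \emph{not} form a contiguous interval of the $\preceq$-order, and consecutive edges need not differ only in a ``prefix/suffix'' block. For instance, $\{1,3,7\}\preceq\{1,5,7\}\preceq\{3,5,7\}$ is a $3$-uniform special shift-chain (all three pairs are $\preceq$-comparable and satisfy $\max(A\setminus B)<\min(B\setminus A)$), yet vertex $3$ lies in the first and third edges but not in the second, and the element exchanged between the first two edges lies strictly between their two common elements. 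So the sliding-window/interval picture that was supposed to justify both the correctness of the greedy step and the linear-time sweep is unavailable, and the dangerous configuration the greedy must rule out --- two triples sharing only their last vertex, one already entirely red and the other entirely blue --- has to be excluded by a genuine argument exploiting specialness. (Recall Fulek's example in the paper: $3$-uniform \emph{non-special} shift-chains need not be $2$-colorable, so no soft argument can close this gap.)

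For comparison, the paper's proof works quite differently: it treats the $3$-uniform case, defines an auxiliary digraph $D$ on the vertex set in which each middle vertex $b$ sends an edge toward the endpoint it shares with a second triple having the same middle vertex (Claim~\ref{graf} guarantees, via specialness, that the out-degree is at most one), proves two forbidden patterns (Claims~\ref{abc} and~\ref{abcd}), deduces that $D$ is a vertex-disjoint union of quasi-trees, and then $2$-colors by a proper BFS coloring of the components together with a careful assignment of digraph edges to the remaining vertices, so that the middle vertex of every triple receives a color different from one of its two extreme vertices; linearity of the algorithm is argued for this structure, not for a sweep. Your proposal would either have to be replaced by an argument of this kind or be completed by a full proof that the greedy sweep cannot be forced into a contradiction; as written, there is a genuine gap at the crucial step, and the auxiliary claims meant to support it are incorrect for abstract special shift-chains.
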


An example found by Fulek~\cite{F10} (depicted on %Appendix~\ref{sec:rado}
Figure~\ref{fig:rado}) shows that Theorem~\ref{thm:shiftchain} is false without assuming that the shift-chain is special.

\begin{figure}[H]
\centering
\includegraphics[width=7cm]{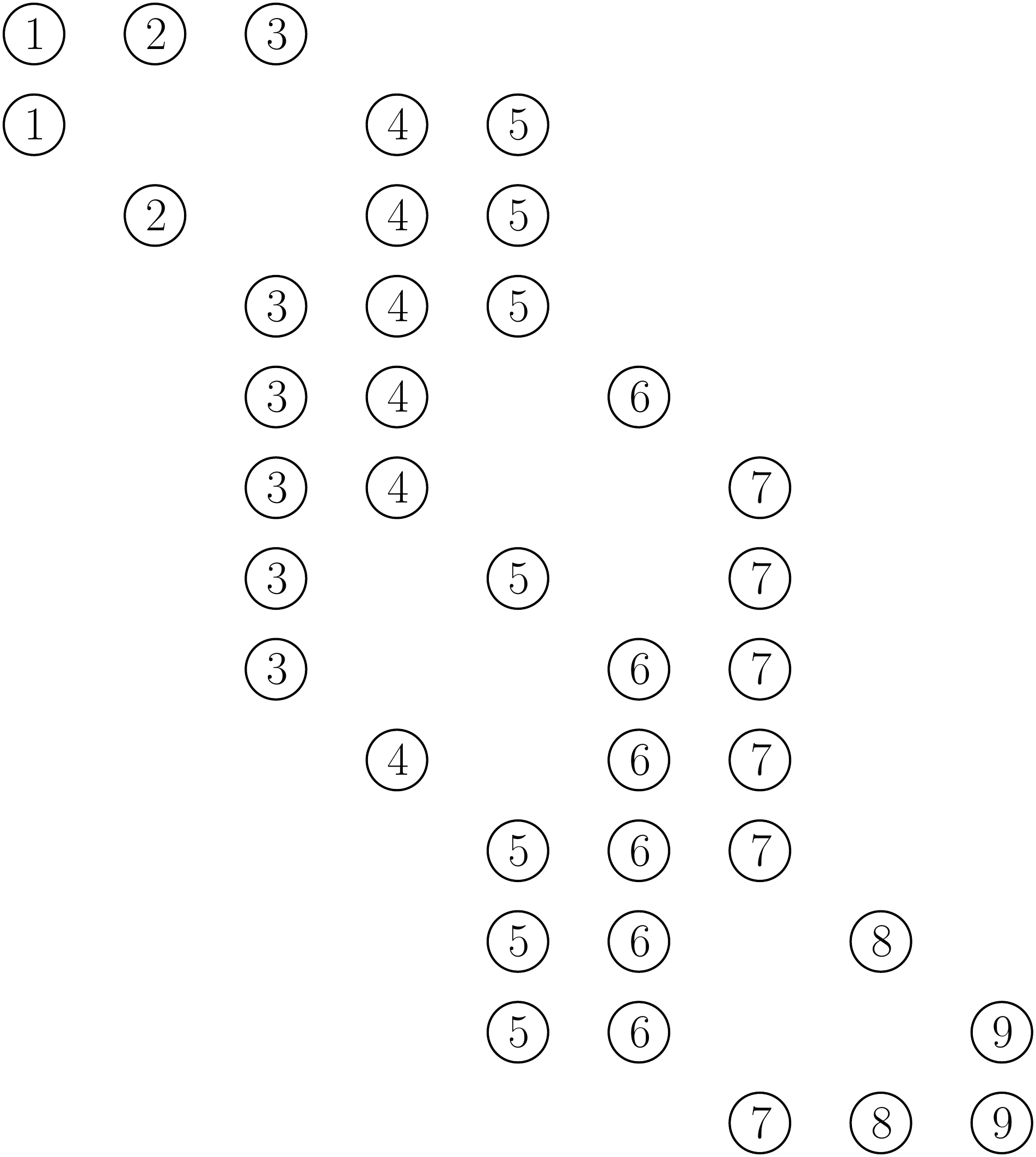} %height=6cm, ha kell a hely
\caption{A shift-chain of $13$ triples, each of which corresponds to a row.
For any $2$-coloring of the $9$ vertices, one of the triples is monochromatic.}\label{fig:rado}
\end{figure}

\begin{problem}
Does there exist an integer $m_0>3$ such that for every $m\ge m_0$, every $m$-uniform shift-chain is $2$-colorable?
\end{problem}

If the answer to this question is yes, in some sense this could be regarded as an extension of the Lov\'asz local lemma~\cite{LLL}. For more problems and results related to shift-chains and special shift-chains, consult~\cite{PhD,KP14}.

\begin{proof}[Proof of Theorem~\ref{thm:shiftchain}.]
The proof breaks into several simple claims.
In the rest of this section, $\H$ denotes a fixed $3$-uniform special shift-chain on $[n]=\{1,2,\ldots,n\}$. For simplicity, a hyperedge (triple) $\{a,b,c\}\in\H$ with $a<b<c$ will be denoted by $\{a<b<c\}$.

\begin{claim}\label{graf} If $\{a<b<c\}\in \H$ and $\{a'<b<c'\}\in \H$, then $a'=a$ or $c'=c$.
\end{claim}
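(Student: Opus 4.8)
The plan is to exploit the total order $\preceq$ on $\H$ together with the ``special'' hypothesis. Since any two hyperedges of a shift-chain are comparable, we may assume without loss of generality that $\{a<b<c\}\preceq\{a'<b<c'\}$. Writing $A=\{a,b,c\}$ and $B=\{a',b,c'\}$ and comparing the $i$-th smallest elements of the two triples, this comparison yields $a\le a'$ and $c\le c'$ (the middle coordinates already agree, both being $b$).

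Next I would argue by contradiction: suppose $a\ne a'$ and $c\ne c'$, so that in fact $a<a'$ and $c<c'$. Combining these with the given chain $a'<b<c$ shows that the five numbers are ordered as $a<a'<b<c<c'$. In particular $a$ and $c$ do not belong to $B$, while $a'$ and $c'$ do not belong to $A$, so $A\setminus B=\{a,c\}$ and $B\setminus A=\{a',c'\}$.

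Now I invoke the special property applied to the pair $A\preceq B$: it asserts $\max(A\setminus B)<\min(B\setminus A)$, i.e.\ $c<a'$. This contradicts $a'<b<c$. Hence one of $a=a'$ or $c=c'$ must hold, which is the claim.

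I do not anticipate any real obstacle here; the only step requiring a little care is the bookkeeping of which elements lie in $A\setminus B$ versus $B\setminus A$, which is exactly why I would first pin down the full chain of inequalities $a<a'<b<c<c'$ before appealing to specialness. Morally, the content is that ``special'' forbids the deleted element on the left ($c$) from exceeding the inserted element on the right ($a'$), whereas sharing the middle element $b$ would force precisely this if both endpoints were allowed to change.
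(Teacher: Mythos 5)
Your proof is correct and follows essentially the same route as the paper: assume both endpoints differ, observe that the symmetric differences are $\{a,c\}$ and $\{a',c'\}$, and derive a contradiction with the specialness condition (the paper's one-line proof is just a terser version of your bookkeeping). No issues.
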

\begin{proof} Otherwise, $\{a<b<c\}\setminus \{a'<b<c'\}=\{a<c\}$ and $\{a'<b<c'\}\setminus \{a<b<c\}=\{a'<c'\}$ would not be separated, %by $\{a<b<c\}\cap\{a'<b<c'\}=\{b\}$
contradicting our assumption that $\H$ is special.
\end{proof}

Define a digraph, $D=D(\H)$ with vertex set $[n]$ and edge set $E$, as follows.
For any $b<c$, the directed edge $bc\in E$ if and only if there exist $a, a'\in [n]$, $a\ne a'$, such that $\{a<b<c\}\in \H$ and $\{a'<b<c\} \in \H$. Analogously, for any $a<b$, the directed edge $ba\in E$ if and only if there exist $c,c'\in [n]$, $c\ne c'$, such that $\{a<b<c\}\in \H$ and $\{a<b<c'\}\in \H$.
According to Claim~\ref{graf}, the out-degree of every vertex of $D$ is at most one. Note that an edge may appear in $E$ with both orientations $ab$ and $ba$.

\begin{claim}\label{graph} The directed graph $D$ can be constructed by a linear time algorithm.
\end{claim}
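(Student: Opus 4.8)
The plan is to build $D$ by a single sweep over the hyperedges listed in the shift-chain order. First observe that, restricted to $\H$, the relation $\preceq$ coincides with the lexicographic order on triples: if $A\preceq B$ and $A\ne B$, then at the first coordinate where $a_i\ne b_i$ we have $a_i<b_i$; and since $\H$ is a shift-chain any two of its hyperedges are $\preceq$-comparable, so $A<_{\mathrm{lex}}B$ forces $A\preceq B$. Consequently, if $\H$ is not already presented as a sorted list, we radix-sort its $N=|\H|$ triples -- whose entries lie in $[n]$ -- into the chain $A_1\preceq A_2\preceq\cdots\preceq A_N$ in $O(N+n)$ time.

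The combinatorial core is a contiguity property of the chain. Fix $b<c$ and consider the indices $i$ for which the two largest elements of $A_i$ are exactly $b$ and $c$; I claim these indices form an interval of $\{1,\dots,N\}$. Indeed, if $A\preceq X\preceq B$ in $\H$ with $A=\{a<b<c\}$ and $B=\{a'<b<c\}$, and $X=\{x_1<x_2<x_3\}$, then $b\le x_2\le b$ and $c\le x_3\le c$, so $X=\{x_1<b<c\}$ has the same top pair. The symmetric statement -- that the hyperedges whose two smallest elements are a fixed pair $\{a,b\}$ form an interval -- follows in the same way. Moreover, inside such a top-pair interval every hyperedge has the form $\{x_1<b<c\}$, so consecutive ones differ only in their smallest element; inside a bottom-pair interval consecutive hyperedges differ only in their largest element.

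The algorithm then scans the consecutive pairs $(A_i,A_{i+1})$, $1\le i<N$; each comparison is $O(1)$ since the hyperedges are triples. If $A_i$ and $A_{i+1}$ agree in their two largest elements $b<c$ and differ in the smallest, output the directed edge $bc$; if they agree in their two smallest elements $a<b$ and differ in the largest, output the directed edge $ba$; otherwise output nothing. This runs in $O(N)$ time, so with the optional sorting the total is $O(N+n)$, linear in the input size. For correctness, every edge output is genuine, since the pair $(A_i,A_{i+1})$ exhibits two distinct hyperedges of $\H$ witnessing it exactly as in the definition of $E$. Conversely, if $bc\in E$ is witnessed by $\{a<b<c\},\{a'<b<c\}\in\H$ with $a\ne a'$, then by the contiguity property the hyperedges with top pair $\{b,c\}$ form an interval of length at least two whose consecutive members differ only in their smallest element, so the first such consecutive pair makes the algorithm output $bc$; the case $ba\in E$ is symmetric. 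Hence the output equals $E$. The only real obstacle is the contiguity property: it is what guarantees that inspecting consecutive pairs suffices, i.e.\ that no edge of $D$ is witnessed solely by hyperedges lying far apart in the chain. Everything else is routine bookkeeping.
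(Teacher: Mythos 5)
Your proof is correct, but it follows a genuinely different route from the paper. The paper processes the triples \emph{in arbitrary order}: for each middle vertex $b$ it stores the pair of neighbours from the first triple in which $b$ occurs as middle vertex, and at the second occurrence it uses Claim~\ref{graf} (any two triples with the same middle vertex share their smallest or their largest element, which also forces out-degree at most one) to decide in $O(1)$ time whether to add $ba$ or $bc$; later occurrences are ignored. You instead radix-sort the triples into the chain order (correctly observing that on a shift-chain $\preceq$ coincides with the lexicographic order), prove a contiguity lemma -- that the hyperedges with a fixed top pair, respectively a fixed bottom pair, form an interval of the chain -- and then read off all edges of $D$ from consecutive pairs in a single scan. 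Both arguments are sound and linear. What your version buys is independence from the \emph{special} shift-chain hypothesis: your contiguity lemma uses only the total order $\preceq$, so your construction of $D$ works for arbitrary $3$-uniform shift-chains, whereas the paper's step~2 explicitly invokes Claim~\ref{graf}, which needs speciality. What the paper's version buys is simplicity of implementation: no sorting is needed, the order of the input is irrelevant, and the only data structure is a table indexed by middle vertices; it also exploits the bound of $O(n)$ hyperedges so that ``linear'' is unambiguous. One cosmetic remark: your scan may emit the same directed edge several times (once per consecutive pair inside an interval), so you should either deduplicate or note that this is harmless for defining the edge set $E$.
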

\begin{proof} $\H$, as any $3$-uniform shift-chain on $n$ vertices has at most $3n-8$ hyperedges. Suppose that they are listed in an arbitrary order, and process them one-by-one. Suppose the next triple is $\{a<b<c\}$.
\begin{enumerate}
\item If $b$ is a middle vertex for the first time, store it, together with both of its {\em neighbors}, $a$ and $c$.

\item If $b$ is a middle vertex for the second time, decide if it was $a$ or $c$ that has been previously stored as one of its neighbors. (By Claim~\ref{graf}, we know that one them was.) If it is $a$, add $ba$ to $E$, if it is $c$, add $bc$.

\item Otherwise, do not add any new edge, and pass to the next triple.\qedhere %LNCS-ben nincs biz vege box!!!???
\end{enumerate}
\end{proof}

%claimeknel erosebb is igaz, csak (ac, bc), (ab, cb), (ab, bc) vagy (ac, cb) modon talalkozhat ket el,
%tehat egymas alatt sem lehetnek es nem is metszhetik egymast.
\begin{claim}\label{abc} For $a<b<c$ (or $c<b<a$) it is not possible that $ac\in E$ and $ba\in E$.
\end{claim}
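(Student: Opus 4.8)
\textbf{Proof plan for Claim~\ref{abc}.}
The plan is to derive a contradiction directly from the definition of the edge set $E$ and the fact that $\H$ is a special shift-chain. Suppose $a<b<c$ and that both $ac\in E$ and $ba\in E$ (the case $c<b<a$ is symmetric). By the definition of $E$, the edge $ac$ (with $a<c$, so $a$ plays the role of the ``smaller'' endpoint being pointed to from the middle vertex $c$) records that there are two triples in $\H$ of the form $\{a<c<d\}$ and $\{a<c<d'\}$ with $d\ne d'$; in other words $a$ and $c$ occur together as the two \emph{smallest} elements of two distinct hyperedges, with differing top element. Meanwhile $ba\in E$ records that $b$ is a middle vertex with $a$ as one of its outer neighbors, so there are two triples $\{a<b<e\}$ and $\{a<b<e'\}$ with $e\ne e'$; here $a$ and $b$ are the two smallest elements of two hyperedges. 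So we have in $\H$ a triple $T_1$ with $\{a,c\}$ as its two smallest elements (and some third element $>c$) and a triple $T_2=\{a<b<e\}$.

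Next I would compare $T_1$ and $T_2$ under $\preceq$. Both contain $a$ as smallest element. In $T_1$ the second element is $c$; in $T_2$ the second element is $b<c$. So $T_2\preceq T_1$ precisely if the third element of $T_2$ is at most the third element of $T_1$, and otherwise $T_1\preceq T_1$... more carefully: whichever way the chain orders them, one of $T_1\preceq T_2$ or $T_2\preceq T_1$ holds. The key point is the symmetric differences. Since $a$ is common, $T_2\setminus T_1\ni b$ and $T_1\setminus T_2\ni c$, with $b<c$. If $T_2\preceq T_1$, specialness demands $\max(T_2\setminus T_1)<\min(T_1\setminus T_2)$; but $b\in T_2\setminus T_1$ and $c\in T_1\setminus T_2$, and we need to check the third elements don't spoil this — actually the clean route is to use the flexibility in choosing the second pair of triples. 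Because $ac\in E$ we have \emph{two} choices for $T_1$ (with third elements $d\ne d'$), and because $ba\in E$ we have two choices for $T_2$ (with third elements $e\ne e'$); I would pick the pair whose third elements are ordered so that $T_2\preceq T_1$, i.e. choose $T_1$ with the larger available top and $T_2$ with the smaller available top, so that the top of $T_2$ does not exceed the top of $T_1$. Then $T_2\setminus T_1=\{b\}$ (the tops coincide or the top of $T_2$ is absorbed) and $T_1\setminus T_2\supseteq\{c\}$, so $\max(T_2\setminus T_1)=b<c\le\min(T_1\setminus T_2)$ — wait, that is \emph{not} a violation. The violation comes the other way: if instead $T_1\preceq T_2$, then $\max(T_1\setminus T_2)<\min(T_2\setminus T_1)$, but $c\in T_1\setminus T_2$ and $b\in T_2\setminus T_1$ with $c>b$, a contradiction.

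So the real content is to show that with the available choices one can force the pair into the order $T_1\preceq T_2$, or more robustly, that \emph{some} pair of witnessing triples for the two edges violates specialness no matter how the chain is oriented. I expect the cleanest argument is: since $b<c$, in \emph{every} pair $(T_1,T_2)$ as above we have $c\in T_1\setminus T_2$ and $b\in T_2\setminus T_1$ (using $b\notin T_1$: indeed $T_1$'s two smallest are $a<c$, so its elements are $a,c$ and something $>c$, none equal to $b$ since $a<b<c$; and $c\notin T_2$: $T_2=\{a<b<e\}$ and if $e=c$ we'd still have $c\in T_1\setminus T_2$ only if... here I must be careful, so I'd instead choose $e\ne c$, possible since there are two choices $e,e'$). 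Having arranged $b\in T_2\setminus T_1$, $c\in T_1\setminus T_2$, $b<c$, specialness is violated whichever of $T_1\preceq T_2$ or $T_2\preceq T_1$ holds, because in one case $\max(T_1\setminus T_2)\ge c>b\ge\min(T_2\setminus T_1)$ and in the other $\max(T_2\setminus T_1)$ versus $\min(T_1\setminus T_2)$ must also be checked — and this is where I'd need to also secure that no element of $T_2\setminus T_1$ exceeds $c$, which is why the freedom to pick $e,e'$ and $d,d'$ matters. \textbf{The main obstacle} is precisely this bookkeeping: pinning down, among the (up to four) candidate pairs of witnessing triples, one concrete pair whose symmetric difference unambiguously contradicts $\max(A\setminus B)<\min(B\setminus A)$ regardless of the chain's orientation. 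Once the right pair is isolated, the contradiction with specialness is immediate and the claim follows.
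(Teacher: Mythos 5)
There is a genuine gap, and it starts with the definition of $E$: you have the orientation of the edge $ac$ backwards. In the paper's convention the tail of every edge is the \emph{middle} vertex of the witnessing triples (for $b<c$, the edge $bc$ records two triples $\{a<b<c\},\{a'<b<c\}$ with middle $b$; for $a<b$, the edge $ba$ records two triples $\{a<b<c\},\{a<b<c'\}$ with middle $b$). Hence $ac\in E$ with $a<c$ means there are two triples $\{x<a<c\},\{x'<a<c\}$ in which $a$ is the middle vertex and $c$ the common top --- not, as you assume, two triples $\{a<c<d\},\{a<c<d'\}$ with $c$ in the middle; the latter would be the reverse edge $ca$. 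Your reading does not just change the bookkeeping, it makes the statement false: on $[12]$ the hyperedges $\{1,2,3\},\{1,2,4\},\{1,10,11\},\{1,10,12\}$ form a special shift-chain in which (with $a=1$, $b=2$, $c=10$) both $ba$ and $ca$ lie in $E$, and no pair of witnessing triples violates specialness. This is exactly why your attempt to ``force'' the order of $T_1$ and $T_2$ and control their symmetric differences cannot be completed --- you candidly flag this as the main obstacle, and it is unresolvable under your interpretation.

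With the correct reading the argument is short and needs no case analysis of orderings. From $ac\in E$ take some $\{x<a<c\}\in\H$, and from $ba\in E$ take $\{a<b<y\}\in\H$ with $y\ne c$ (possible because $ba$ supplies two distinct tops). Since $x<a$, comparing smallest elements shows the chain must order them as $\{x<a<c\}\preceq\{a<b<y\}$. Their differences are $\{x,c\}$ and $\{b,y\}$, so $\max$ of the earlier difference is $c$, $\min$ of the later one is $b$, and $c>b$ contradicts specialness. The one piece of ``freedom'' you correctly anticipated (choosing the top $\ne c$) is indeed used, but the decisive point you are missing is that $ac\in E$ puts a vertex $x<a$ into the first triple, which pins down the $\preceq$-order once and for all.
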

\begin{proof} Suppose $ac, ba\in E$. By definition, this means that there exist $\{x<a<c\}\in\H$ and $\{a<b<y\}\in\H$, for some $x$ and some $y\ne c$. Obviously, with respect to the ordering of the triples, we have $\{x<a<c\}\prec \{a<b<y\}$. The sets $\{x<a<c\}\setminus\{a<b<y\}=\{x<c\}$ and $\{a<b<y\}\setminus\{x<a<c\}=\{b<y\}$ are not separated, because the maximal element of the first set, $c$, is larger than the minimal element of the second set, $b$. This contradicts our assumption that $\H$ was a {\em special} shift-chain.
\end{proof}

\begin{claim}\label{abcd} For $a<b<c<d$ it is not possible that $bd\in E$ and $ca\in E$.
\end{claim}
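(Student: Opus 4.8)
The plan is to argue by contradiction, in the same spirit as the proof of Claim~\ref{abc}. Assume $a<b<c<d$ together with $bd\in E$ and $ca\in E$. Unravelling the definition of $D$, from $bd\in E$ I get at least one triple $B=\{x<b<d\}\in\H$ (so $x<b$), and from $ca\in E$ I get two triples $\Gamma_1=\{a<c<y_1\}\in\H$ and $\Gamma_2=\{a<c<y_2\}\in\H$ with $y_1\ne y_2$ (so $c<y_1$ and $c<y_2$). The key bookkeeping fact, used throughout, is that the linear order $a<b<c<d$ together with $x<b$ and $c<y_j$ already pins down the incidences I need: in particular $b\notin\Gamma_j$ (as $b\ne a,c,y_j$), $c\notin B$ (as $c\ne x,b,d$), and $d\notin\{a,c\}$.

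First I would show that $B\prec\Gamma_j$ for $j\in\{1,2\}$. Since $\H$ is a shift-chain, $B$ and $\Gamma_j$ are $\preceq$-comparable, and $B\ne\Gamma_j$ because $b\in B\setminus\Gamma_j$. If instead $\Gamma_j\preceq B$, then specialness gives $\max(\Gamma_j\setminus B)<\min(B\setminus\Gamma_j)$; but $c\in\Gamma_j\setminus B$ forces $\max(\Gamma_j\setminus B)\ge c$ and $b\in B\setminus\Gamma_j$ forces $\min(B\setminus\Gamma_j)\le b$, so $c<b$, contradicting $b<c$. Hence $B\prec\Gamma_j$.

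Next I would apply specialness to $B\prec\Gamma_j$, getting $\max(B\setminus\Gamma_j)<\min(\Gamma_j\setminus B)$. Since $c\in\Gamma_j\setminus B$ we have $\min(\Gamma_j\setminus B)\le c<d$; so if $d\ne y_j$, then $d\notin\Gamma_j=\{a,c,y_j\}$, hence $d\in B\setminus\Gamma_j$ and $\max(B\setminus\Gamma_j)\ge d>c\ge\min(\Gamma_j\setminus B)$, a contradiction. Therefore $d=y_j$. Using this for both $j=1$ and $j=2$ gives $y_1=d=y_2$, contradicting $y_1\ne y_2$, which completes the argument.

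I do not expect a genuine obstacle here; the only thing to handle with care is the bookkeeping of which of $a,b,c,d,x,y_j$ lies in $B$, in $\Gamma_j$, and in the two set differences, everything being forced by the chain $a<b<c<d$ augmented with $x<b$ and $c<y_j$. The one conceptual point is the asymmetry of the hypothesis: it is a single $bd$-triple combined with \emph{both} $ca$-triples that produces the contradiction (equivalently, by reversing the order of $[n]$, both $bd$-triples with one $ca$-triple). After that, each use of specialness is a one-line computation.
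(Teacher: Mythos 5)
Your argument is correct: every incidence you use ($b\in B\setminus\Gamma_j$, $c\in\Gamma_j\setminus B$, and $d\in B\setminus\Gamma_j$ when $d\ne y_j$) is indeed forced by $a<b<c<d$, $x<b$ and $c<y_j$, both applications of the specialness condition are sound, and the conclusion $y_1=d=y_2$ contradicts $y_1\ne y_2$. Your route is a variant of the paper's rather than identical to it: the paper spends the doubled witnesses differently, choosing one triple $\{x<b<d\}$ with $x\ne a$ and one triple $\{a<c<y\}$ with $y\ne d$; these two triples are then disjoint, and since they interleave ($x<b<c<y$ while $b<c<d$), neither can be separated from the other in either $\preceq$-direction, so specialness is violated immediately, without first deciding which triple is $\preceq$-smaller. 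Your version instead fixes an arbitrary $bd$-triple, keeps both $ca$-triples, and shows each must end in $d$; this costs the extra step of establishing $B\prec\Gamma_j$, but, as you note, it yields the slightly stronger statement that a single witnessing triple for the edge $bd$ already suffices. Both proofs run on the same engine, namely specialness applied to a $bd$-triple against a $ca$-triple, so the difference lies only in how the multiplicity of witnesses is used, not in the underlying idea.
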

\begin{proof} This would mean that there exist $\{x<b<d\}\in\H$ with $x\ne a$ and $\{a<c<y\}\in\H$ with $y\ne d$. These two triples are disjoint, but not separated, contradicting the assumption that $\H$ is special.
\end{proof}

If a directed graph $T$ can be obtained from a directed tree oriented toward its root $r$, by possibly adding one of the edges $pr$ entering the root also with the reverse orientation $rp$, then it is called a {\em quasi-tree}. Note that in this case, we can also think of $T$ as a quasi-tree rooted in $p$.

\begin{claim} The graph $D$ is the vertex-disjoint union of quasi-trees.  %http://en.wikipedia.org/wiki/Arborescence_(graph_theory)
\end{claim}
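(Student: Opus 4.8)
The plan is to derive the claim from Claim~\ref{graf} (which bounds out-degrees) together with one additional fact — that $D$ contains no long directed cycle — which is where Claims~\ref{abc} and~\ref{abcd} come in.

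First I would treat each weakly connected component $G$ of $D$ separately; since distinct components are vertex-disjoint, it suffices to show each is a quasi-tree. By Claim~\ref{graf} every vertex has out-degree at most one, so $G$ has at most $|V(G)|$ edges. If it has $|V(G)|-1$ edges then it is a tree in which every vertex but one has out-degree exactly one; such a tree is necessarily oriented toward its unique out-degree-zero vertex $r$ (peel off $r$ and recurse on the subtrees), hence is a quasi-tree. If $G$ has $|V(G)|$ edges then every vertex has out-degree exactly one, so $G$ is a connected functional digraph: it contains a unique directed cycle and every other vertex lies on a directed path into it. This cycle is not a loop (the edges of $D$ join distinct vertices); and if it is a $2$-cycle $c_1\to c_2\to c_1$, then deleting the edge $c_2\to c_1$ leaves a tree oriented toward $c_2$, so $G$ is a quasi-tree rooted at $c_2$ with distinguished in-neighbour $c_1$. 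Thus the whole statement reduces to showing that $D$ contains no directed cycle of length $\ge 3$.

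Suppose $C\colon v_0\to v_1\to\dots\to v_{k-1}\to v_0$ is such a cycle with $k\ge 3$, indices read mod $k$ (so each $v_{i-1}v_i\in E$). Recall that the vertex set $\{1,\dots,n\}$ is ordered the way the points $p_1,\dots,p_n$ are ordered by $x$-coordinate, and let $v_M$ be the largest vertex on $C$. Applying Claim~\ref{abc} to the path $v_{M-1}\to v_M\to v_{M+1}$ (both $v_{M\pm1}<v_M$) shows $v_{M-1}$ cannot lie strictly between $v_{M+1}$ and $v_M$, hence $v_{M-1}<v_{M+1}$; applying it to $v_{M-2}\to v_{M-1}\to v_M$ gives $v_{M-2}<v_{M-1}$, so $v_{M-1}$ is not the smallest vertex of $C$. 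Now let $j$ be minimal with $v_{M+j}\le v_{M-1}$; it exists with $2\le j\le k-1$ (because $v_{M+k-1}=v_{M-1}$ while $v_{M+1}>v_{M-1}$), and $v_{M-1}<v_{M+i}<v_M$ for every $1\le i<j$. If $v_{M+j}<v_{M-1}$, then $v_{M+j}<v_{M-1}<v_{M+j-1}<v_M$ are four distinct vertices and the edges $v_{M-1}\to v_M$ and $v_{M+j-1}\to v_{M+j}$ of $C$ violate Claim~\ref{abcd}. If instead $v_{M+j}=v_{M-1}$, then $j=k-1$, so $v_{M+1},\dots,v_{M+k-2}$ all exceed $v_{M-1}$ and $v_{M-1}$ is the smallest vertex on $C$, contradicting $v_{M-2}<v_{M-1}$. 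Either way we reach a contradiction, so no such cycle exists.

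The out-degree bookkeeping in the reduction step is routine. The real content is the cycle step, and I expect the main difficulty to be exactly what it looks like above: finding the right pair of edges to play off against Claim~\ref{abcd} — namely the edge entering the rightmost vertex $v_M$ of the cycle together with the first edge of the forward walk from $v_M$ that drops to the level of $v_{M-1}$ — and then verifying that the two resulting cases (the walk dipping strictly below $v_{M-1}$ at some point, or else staying within $[v_{M-1},v_M]$ all the way around the cycle) are genuinely exhaustive and that the quadruple produced in the first case is correctly ordered $v_{M+j}<v_{M-1}<v_{M+j-1}<v_M$ with four distinct entries.
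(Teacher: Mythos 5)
Your proof is correct and follows essentially the same route as the paper: you reduce, via the out-degree bound from Claim~\ref{graf}, to excluding directed cycles of length at least $3$, use Claim~\ref{abc} twice to order the neighbours of an extremal cycle vertex, and then destroy the cycle by applying Claim~\ref{abcd} to a first ``level-crossing'' edge of the walk around it. The only difference is cosmetic: you anchor at the largest cycle vertex (and therefore must treat the extra case where the walk returns exactly to $v_{M-1}$), while the paper anchors at the smallest vertex and chooses a threshold level that the relevant path avoids.
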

\begin{proof}
As no vertex of $D$ has out-degree larger than $1$, it is enough to show
%First, we show
that $D$ has no directed cycle of length larger than $2$. Suppose there is such a directed cycle, and denote its smallest and largest elements by $a$ and $b$, respectively. By Claim~\ref{abc}, we have that $ab\notin E$ and $ba\notin E$. Let $ya\in E$ and $ax\in E$ be the incoming edge and the outgoing edge of the cycle at $a$. Again, by Claim~\ref{abc}, we have $a<x<y<b$. There is a directed path from $x$ to $b$, and along this path there is a first edge $uv$ with $u<y$ and $v>y$. But then the edges $ya, uv\in E$ would contradict Claim~\ref{abcd}, as $a<u<y<v$.
%Now the claim follows from the fact that .
\end{proof}

%Therefore, we can easily find a proper $2$-coloring of $G$ in linear time.
Now we are in a position to find a $2$-coloring of $\H$ in linear time. %which will extend a proper $2$-coloring of $G$.
For every $\{a<b<c\}\in \H$, we will guarantee that the color of its middle vertex, $b$, will differ from the color of $a$ or the color of $c$.

%First, we arbitrarily color each root (non-zero in-degree, zero out-degree vertex) of $D$, and extend this to a proper $2$-coloring of its connected component in $D$ (e.g., using breadth-first search).
First, using breadth-first search, we properly $2$-color each connected component.
Hence, it will be guaranteed that if the out-degree of $b$ is non-zero, then all triples of the form $\{a<b<c\}\in \H$ contain both colors.
Then assign to each vertex $x\in[n]$ an edge $ab\in E$ such that $a< x<b$ or $b< x<a$, provided that such an edge exists.
This can be done in linear time, but here we omit the details.

For every $\{x<y<z\}\in \H$ that does not yet contain both colors, its middle vertex $y$ has out-degree zero.
If there is an edge $bc\in E$ assigned to $y$ such that $b< y<c$, then there are two different hyperedges $\{a<b<c\},\{a'<b<c\}\in \H$.
Either $a\ne x$ or $a'\ne x$, and thus, necessarily, we have $c=z$.
We color $y$ with the same color as $b$ (i.e., differently from $c=z$), so that $\{x<y<z\}$ contains both colors.
Note that if the in-degree of $y$ is non-zero, then a simple case analysis shows that the only possibility is $zy\in E$. Thus, this color agrees with the color given earlier to $y$ from its connected component.

In a similar manner, if there is an edge $ab\in E$ assigned to $y$ such that $a< y<b$, then necessarily $a=x$, and we can color $y$ with the same color as $b$ (i.e., differently from $a=x$).

Finally, if there are uncolored vertices, color them in increasing order so that when $y$ is colored,
if there are $\{x<y<z\}\in \H$, then $y$ gets the opposite color as $x$.
(This step is well defined, because it follows from the fact that the out-degree of $y$ is zero, that there is only one triple $\{x<y<z\}$ with the above property.)

This completes the proof of Theorem~\ref{thm:shiftchain}, as it follows that the middle vertex of any triple will have a different color from another vertex of the triple.
\end{proof}

\section{Covering space with unbounded convex sets}\label{sec:unbounded}

Every open, unbounded, line-free convex set $C$ is contained in a half-space with inner normal vector $\vec v$ such that for any $c\in C$, the half-line emanating from $c$ and pointing in the direction of $\vec v$ lies entirely in $C$. We can assume without loss of generality that $\vec v$ is the unit vector $e_d=(0,0,\ldots,0,1)$, pointing {\em vertically upwards}, and that $C$ lies in the {\em upper half-space}.

First, we prove Proposition~\ref{claim}, according to which every covering of $\R^d$ with translates of
a set $C$ satisfying the above conditions can be split into two, and hence into infinitely many, coverings.
We prove a slight generalization of this statement, in which $C$ is not required to be convex.

\medskip
\noindent{\bf Proposition~\ref{claim}'.} {\em
Let $C$ be an open set in the upper half-space of $\mathbb{R}^d$, which has the property that, for every $c\in C$, the half-line starting at $c$ and pointing vertically upwards belongs to $C$.
Then every covering of $\R^d$ with translates of $C$ splits into two coverings.}
\begin{proof}
For any positive integer $i$, let $B_i$ denote the {\em closed} $(d-1)$-dimensional ball of radius $i$ around the origin in the coordinate hyperplane $x_d=0$. Let $\C$ be a covering of $\R^d$ with translates of $C$. As the members of $\C$ cover the whole $d$-dimensional space, they also cover the $(d-1)$-dimensional ball $B_1\times \{0\}$, orthogonal to the $x_d$-axis. This set is {\em compact} and the members of $\C$ are {\em open}. Therefore, there is a {\em finite} subfamily $\C_1\subset \C$ which covers $B_1\times \{0\}$. Choose a number $z_1<0$ such that all members of $\C_1$ lie strictly above the hyperplane $x_d=z_1$, and consider the $(d-1)$-dimensional ball $B_2\times \{z_1\}$. Select a finite family $\C_2\subset\C$ that covers this ball and a number $z_2<z_1$ such that all members of $\C_2$ lie strictly above the hyperplane $x_d=z_2$. Proceeding like this, we can construct an infinite sequence of disjoint finite subfamilies $\C_1, \C_2,\ldots\subset\C$ and a sequence of reals $z_0:=0>z_1>z_2>\ldots$ tending to $-\infty$ such that $\C_i$ covers the $(d-1)$-dimensional ball $B_i\times\{z_{i-1}\}$.

Let $p$ be any point of $\mathbb{R}^d$ which is at distance $r$ from the $d^{th}$ coordinate axis and whose $d^{th}$ coordinate is $p_d$. Notice that $p$ lies above some point of every $(d-1)$-dimensional ball $B_i\times\{z_{i-1}\}$ such that $i\ge r$ and $z_{i-1}\le p_d$. Consequently, $p$ is covered by the corresponding families $\C_i$. Hence, $\C_1\cup\C_3\cup\C_5\cup\ldots$ and $\C_2\cup\C_4\cup\C_6\cup\ldots$ are two disjoint subfamilies of $\C$, each of which covers the whole space.
\end{proof}

Next, we establish Theorem~\ref{ugly}, which shows that starting from $4$-dimensions, Proposition~\ref{claim} is false if we drop the assumption that $C$ is an {\em open} set.

\begin{proof}[Proof of Theorem~\ref{ugly}.]
We have to prove that there is a convex, bounded (not open) set $C'\subset \R^3$ such that $\R^4$ can be covered by translates of $C=C'\times [0,\infty)$ so that every point of $\R^4$ is covered infinitely many times, but this covering cannot be decomposed into two.

The set $C'$ will be the convex hull of $\cup_{i=1}^\infty (C_i \times \{\frac 1{i^2}\})$, where each $C_i$ is in $\R^2$, and thus $C_i \times \{\frac 1{i^2}\}$ lies in the plane determined by the equation $z=\frac 1{i^2}$ of $\R^3$.
Each $C_i$ is the union of an open disk, defined by the inequality $x^2+(y-\frac 1i)^2<1$, and a part of its boundary defined as follows.
A point belongs to the boundary of $C_i$ if and only if it can be represented as $(x,\sqrt{1-x^2}+\frac 1i)$, where $x\in[0,1]$ and the $i^{th}$ digit of $x$ after the ``decimal'' point in binary form is $1$.
For each $i$, denote the set of such $x$'s by $C_i^*$.
Therefore, $C_i^*$ is the disjoint union of $2^{i-1}$ closed intervals.

Note that $C'$ is neither closed, nor open. Clearly, $C'$ is a bounded set, as it is contained in the box $[-1,1]\times [-1,2]\times [0,1]$.
Observe that for every $i$, the point $(0,\frac 1i, \frac 1{i^2})$, the center of the disk $C_i \times \{\frac 1{i^2}\}$, lies the plane $x=0$, on the parabola $z=y^2$.
%All points of the form $(x,\sqrt{1-x^2}+z,z^2)$ for $x,z\in[0,1]$ are extreme points of $C'$. %(not necessarily contained in it).
Hence, for each $i$ and for every point $p\in\R^3$ whose third coordinate is $\frac 1{i^2}$ and first coordinate is non-negative, $p$ belongs to the boundary of $C'$ if and only if it is of the form $(x,\sqrt{1-x^2}+z,z^2)$ with $x\in C_i^*$ and $z=\frac 1i$. To see this, it is enough to notice that no point of this form can be obtained as a convex combination of other points in $C'$.
%Notice that such points are of the form $(x,\sqrt{1-x^2}+y,y^2)$ for some $x\in[0,1]$ and $y=\frac 1i$, and are all extreme points of $C'$.
%For $x,z\in[0,1]$, therefore, $(x,\sqrt{1-x^2}+z,z^2)\in C'$ if and only if $z=\frac 1i$ and $(x,\sqrt{1-x^2}+\frac 1i)\in C_i$.

Now we describe an infinite-fold covering $\mathcal C$ of $\R^4$ with translates of $C$ that cannot be decomposed into two coverings.
Let $X=\{(x,\sqrt{1-x^2},0,-w)\mid x\in [0,1],w\in [0,\infty)\}$.
%Let $X_w=\{(x,\sqrt{1-x^2},0,-w)\mid x\in [0,1]\}$ and $X=\cup \{X_w \mid w\in [0,\infty)\}$.
For every point $x\not\in X$, select an arbitrary translate of $C$ that covers $x$ and does not intersect $X$. (It is easy to see that such a translate always exists.) Let $\mathcal C$ consist of all these translates, and for every $i\; (i=1,2,\ldots)$, the translate of $C$ through the vector $(0,-\frac 1i, -\frac 1{i^2}, -i)$, denoted by $\hat C_i$.

Notice that the $\hat C_i$ covers $(x,\sqrt{1-x^2},0,-w)\in X$ if and only if $x\in C_i^*$ and $w\le i$.
This implies that every point of $X$ is covered by infinitely many members of $\mathcal C$, because every number has a representation with infinitely many digits that are $1$.

It remains to show that $\mathcal C$ cannot be split into two coverings.
This is a direct consequence of the following statement:
For any $I\subset \N$ for which $\N \setminus I$ is infinite, there is a point $(x,\sqrt{1-x^2},0,0)\in X$ that is not covered by $\cup_{i\in I} \hat C_i$.

To prove this statement when $I$ is {\em infinite}, define the $i^{th}$ digit of $x$ as $1$ if and only if $i\notin I$. Since this is only one binary representation of $x$, we have $x\notin\cup _{i\in I} \hat C_i^*$ and $(x,\sqrt{1-x^2},0,0)\notin\cup_{i\in I} \hat C_i$.
If $I$ is {\em finite}, it can be extended to an infinite set such that $\N \setminus I$ remains infinite. Thus, this case can be reduced to the case when $I$ is infinite.
\end{proof}

\section{Bounded coverings}\label{sec:lll}
We prove Theorem~\ref{thm:lll} in a somewhat more general form. For the proof we need the following consequence of the Lov\'asz local lemma.

\begin{lem}[Erd\hosszuo os-Lov\'asz~\cite{LLL}]\label{locallemma}
Let $k,m\ge 2$ be integers. If every edge of a hypergraph has at least $m$ vertices and every edge intersects at most $k^{m-1}/4(k-1)^m$ other edges, then its vertices can be colored with $k$ colors so that every edge contains at least one vertex of each color.
\end{lem}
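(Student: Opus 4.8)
The statement to prove is Lemma~\ref{locallemma}, the symmetric form of the Lov\'asz local lemma specialized to hypergraph coloring. The plan is to reduce the $k$-coloring problem to a probabilistic experiment and apply the standard (symmetric) Lov\'asz local lemma to a collection of ``bad events.''

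First I would set up the probability space: color each vertex independently and uniformly at random with one of the $k$ colors. For each edge $e$ and each color $j\in\{1,\dots,k\}$, let $A_{e,j}$ be the event that \emph{no} vertex of $e$ receives color $j$. Since $e$ has at least $m$ vertices, $\Pr[A_{e,j}]\le(1-1/k)^m=((k-1)/k)^m=:p$. A coloring is good (every edge meets every color) precisely when none of the events $A_{e,j}$ occurs, so it suffices to show $\Pr[\bigcap \overline{A_{e,j}}]>0$. Next I would describe the dependency graph: $A_{e,j}$ is mutually independent of all events $A_{e',j'}$ with $e'\cap e=\emptyset$, so $A_{e,j}$ depends on at most $k\cdot N$ other events, where $N$ is the maximum number of edges that a fixed edge intersects; by hypothesis $N\le k^{m-1}/\bigl(4(k-1)^m\bigr)$. (One should be slightly careful: $A_{e,j}$ and $A_{e,j'}$ with $j\neq j'$ are negatively correlated, not independent, but for the local lemma it is enough that each event is mutually independent of all but a bounded set of others, and counting the $k-1$ events $A_{e,j'}$ together with $A_{e',j'}$ for $e'$ meeting $e$ gives a valid dependency set of size at most $k\cdot N + (k-1) \le kN+k$.) The symmetric local lemma then applies if $e\cdot p\cdot(D+1)\le 1$, where $D$ is the degree bound of the dependency graph and $e=2.718\ldots$ is Euler's number; it is cleaner to use the weaker sufficient condition $4pD\le 1$, i.e.\ it suffices that $p\cdot D\le 1/4$.

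The remaining step is the arithmetic: with $p=((k-1)/k)^m$ and $D\le k\cdot N \le k\cdot k^{m-1}/\bigl(4(k-1)^m\bigr)=k^m/\bigl(4(k-1)^m\bigr)$, we get $pD\le ((k-1)/k)^m\cdot k^m/\bigl(4(k-1)^m\bigr)=1/4$, exactly meeting the $4pD\le1$ threshold. (If one insists on handling the extra $k-1$ events $A_{e,j'}$ sharing the edge $e$, a constant-factor slack can be absorbed either by noting those are negatively correlated with $A_{e,j}$ and do not spoil the union bound in the local lemma's induction, or by a trivial adjustment of constants; the clean bookkeeping is to define the dependency set of $A_{e,j}$ as $\{A_{e',j'} : e'\cap e\ne\emptyset\}$, which has size at most $kN$ and contains $A_{e,j}$ itself only when we also include $j'=j$, so size $\le kN$, giving $pD\le1/4$ as above.) Hence $\Pr[\bigcap\overline{A_{e,j}}]>0$, so a good $k$-coloring exists.

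The main obstacle is not conceptual but organizational: getting the dependency count exactly right so that the constant $1/4$ in the hypothesis matches the $4pD\le1$ form of the local lemma, while correctly accounting for the events $A_{e,j}$ and $A_{e,j'}$ attached to the \emph{same} edge. These are not independent, but they are a harmless bounded-size addition to each dependency neighborhood, and the standard statement of the local lemma (mutual independence from all events outside a neighborhood of bounded size) still applies. I expect this is exactly why the authors phrase the hypothesis with the particular constant $k^{m-1}/4(k-1)^m$: it is tuned so that the elementary computation $pD=1/4$ closes the argument with no slack to spare.
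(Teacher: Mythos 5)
Your overall strategy (random uniform $k$-coloring plus the symmetric local lemma) is the standard one, and it is what the paper implicitly relies on: the paper does not prove this lemma but cites it from Erd\H os--Lov\'asz. However, your bookkeeping has a genuine gap, and the two ways you propose to close it do not work. With one bad event $A_{e,j}$ per (edge, color) pair, the events $A_{e,j'}$, $j'\ne j$, attached to the \emph{same} edge are determined by the same vertex colors, so they must be counted in the dependency neighborhood; its size is therefore $kN+k-1$, not $kN$ (the set $\{A_{e',j'}:e'\cap e\ne\emptyset\}$ contains the $k$ events of $e$ itself, so your ``size at most $kN$'' is a miscount). With the correct count, $4pD\le 1$ fails precisely in the tight cases: e.g.\ for $k=2$, $m=3$, $N=1$ one gets $p=1/8$, $D=3$, so $4pD=3/2>1$, and even the $e\,p(D+1)\le 1$ form gives $e/2>1$. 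Your fallback via negative correlation is backwards for the lopsided local lemma: what one would need is $\Pr[A_{e,j}\mid \overline{A_{e,j'}}]\le\Pr[A_{e,j}]$, but conditioning on ``color $j'$ appears on $e$'' \emph{increases} the probability that color $j$ is missing from $e$ (indeed $\bigl(\tfrac{k-2}{k}\bigr)<\bigl(\tfrac{k-1}{k}\bigr)^2$), so the same-edge events cannot be excluded from the neighborhood. Finally, ``a trivial adjustment of constants'' is not available, since the constant $k^{m-1}/\bigl(4(k-1)^m\bigr)$ is part of the statement to be proved.

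The clean repair, which recovers the stated constant exactly, is to take one bad event per edge: let $A_e$ be the event that $e$ misses \emph{some} color. By the union bound over the $k$ colors, $\Pr[A_e]\le k\bigl(\tfrac{k-1}{k}\bigr)^m=(k-1)^m/k^{m-1}$, and $A_e$ is mutually independent of all $A_{e'}$ with $e'\cap e=\emptyset$, so the dependency degree is at most $N\le k^{m-1}/\bigl(4(k-1)^m\bigr)$. Then $4\Pr[A_e]N\le 1$, and the symmetric local lemma in its $4pd\le 1$ form yields a coloring in which no $A_e$ occurs (the case $N=0$ being trivial). This is exactly how the constant in the hypothesis is tuned, and it is the argument behind the cited Erd\H os--Lov\'asz lemma; your version, as written, only establishes the statement with a slightly smaller admissible intersection bound.
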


Let $\mathcal C$ be a class of subsets of $\R^d$.
Given $n$ members $C_1,\ldots,C_n$ of $\mathcal C$, assign to each point $x\in \R^d$ a {\em characteristic vector} $c(x)=(c_1(x),\ldots,c_n(x))$, where $c_i(x)=1$ if $x\in C_i$ and $c_i(x)=0$ otherwise. The number of distinct characteristic vectors shows how many ``pieces''  $C_1,\ldots, C_n$ cut the space into. The {\em dual shatter function} of $\mathcal C$, denoted by $\pi^*_{\mathcal C}(n)$, is the maximum of this quantity over all $n$-tuples $C_1,\ldots,C_n\in\mathcal C$. For example, when $\mathcal C$ is the family of open {\em balls} in $\R^d$, it is well known that
\begin{equation}\label{gombok}
\pi^*_{\mathcal C}(n)\le{n-1\choose d}+\sum_{i=0}^d {n\choose i}\le n^d,
\end{equation}
provided that $2\le d\le n$.

\begin{thm}\label{shatter}
Let $\mathcal C$ be a class of open sets in $\R^d$ with diameter at most $D$ and volume at least $v$. Let $\pi(n)=\pi^*_{\mathcal C}(n)$ denote the dual shatter function of $\mathcal C$, and let $B^d$ denote the unit ball in $\R^d$. Then, for every positive integer $m$, any $m$-fold covering of $\R^d$ with members of $\mathcal C$ splits into two coverings, provided that no point of the space is covered more than $\frac{v}{(2D)^d Vol B^d}\pi^{-1}(2^{m-3})$ times, where $Vol B^d$ is the volume of $B^d$.
\end{thm}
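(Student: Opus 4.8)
The plan is to derive the theorem from the Lov\'asz local lemma in the form of Lemma~\ref{locallemma} with $k=2$, after passing to the dual hypergraph. Write $N=\frac{v}{(2D)^d\,\mathrm{Vol}(B^d)}\,\pi^{-1}(2^{m-3})$ for the assumed upper bound on the covering multiplicity, where $\pi^{-1}(t)=\max\{n:\pi(n)\le t\}$; I tacitly assume $\pi$ is unbounded, so that $\pi^{-1}$ takes finite values (this holds in all cases of interest, in particular for balls by \eqref{gombok}), whence $N$ is a finite number. Given an $m$-fold covering $\mathcal C$ of $\R^d$ by members of the class with multiplicity at most $N$, I form the hypergraph $H$ whose vertex set is $\mathcal C$ and whose hyperedges are the sets $S_x=\{C\in\mathcal C:\ x\in C\}$, one for each $x\in\R^d$. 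A red/blue colouring of $H$ in which no hyperedge is monochromatic is exactly a partition of $\mathcal C$ into two coverings, since ``$S_x$ not monochromatic'' says precisely that $x$ lies in a red member and in a blue member. By hypothesis $|S_x|\ge m$ for every $x$, and $|S_x|\le N$, so every hyperedge of $H$ is finite.

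Next I would estimate how many hyperedges a fixed $S_x$ meets. Let $\bar B(x,r)$ denote the closed ball of radius $r$ about $x$. If $S_y\cap S_x\neq\emptyset$, some $C\in\mathcal C$ contains both $x$ and $y$, and since $\operatorname{diam}C\le D$ this forces $y\in\bar B(x,D)$. Let $\mathcal F$ be the family of members of $\mathcal C$ meeting $\bar B(x,D)$. Every member of every $S_y$ with $S_y\cap S_x\neq\emptyset$ lies in $\mathcal F$, so all such hyperedges are distinct characteristic-vector classes of the family $\mathcal F$, and there are at most $\pi(|\mathcal F|)$ of them. Each $C\in\mathcal F$ is contained in $\bar B(x,2D)$ (again because $\operatorname{diam}C\le D$), has volume at least $v$, and the sets in $\mathcal F$ have total volume at most $N\cdot\mathrm{Vol}(\bar B(x,2D))=N(2D)^d\,\mathrm{Vol}(B^d)$, so $|\mathcal F|\le N(2D)^d\,\mathrm{Vol}(B^d)/v=:n_0$. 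The choice of $N$ gives $n_0\le\pi^{-1}(2^{m-3})$, hence by monotonicity of $\pi$ the number of hyperedges other than $S_x$ meeting $S_x$ is at most $\pi(n_0)-1\le 2^{m-3}-1<2^{m-3}$, which is the value of $k^{m-1}/4(k-1)^m$ at $k=2$.

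Finally, the hypotheses of Lemma~\ref{locallemma} (every hyperedge of size $\ge m$, every hyperedge meeting at most $2^{m-3}$ others) are now verified; being local, they apply verbatim to the possibly infinite hypergraph $H$, and alternatively one reduces to finite subhypergraphs --- each of which still has all hyperedges of size $\ge m$ and inherits the degree bound --- via the usual compactness argument for $2$-colourings, which is legitimate here because all hyperedges of $H$ are finite. Either way $H$ admits a red/blue colouring with no monochromatic hyperedge, i.e.\ the desired splitting into two coverings. Theorem~\ref{thm:lll} then follows by substituting $\pi(n)\le n^d$ from \eqref{gombok} for unit balls, which turns the multiplicity threshold into $c_d2^{m/d}$ for an explicit constant $c_d>0$ depending only on $d$. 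The step that needs the most care is the geometric/packing estimate of the middle paragraph: converting the metric data (diameter $\le D$, volume $\ge v$) and the multiplicity bound $\le N$ into the bound $|\mathcal F|\le n_0$, and then into a count of the hyperedges near $S_x$ through the dual shatter function; the remainder is the set-up of the dual problem, the compactness reduction, and a black-box application of the local lemma.
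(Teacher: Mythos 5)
Your proposal is correct and follows essentially the same route as the paper: form the hypergraph whose vertices are the covering sets and whose edges are the incidence sets $S_x$, use the diameter/volume/multiplicity bound to limit the number of sets near $\bar B(x,D)$, convert that via the dual shatter function into a bound of $2^{m-3}$ on the number of edges meeting a given edge, and apply Lemma~\ref{locallemma} with $k=2$. Your only additions — spelling out $\pi^{-1}$ and the compactness reduction to finite subhypergraphs (all edges being finite) — are careful touches the paper leaves implicit, not a different argument.
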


\begin{proof}
Given an $m$-fold covering of $\R^d$ in which no point is covered more than $M$ times, define a hypergraph $\H=(V,E)$, as follows. Let $V$ consist of all members of $\mathcal C$ that participate in the covering. To each point $x\in\R^d$, assign a (hyper)edge $e(x)$: the set of all members of the covering that contain $x$. (Every edge is counted only once.) Since every point $x$ is covered by at least $m$ members of $\mathcal C$, every edge $e(x)\in E$ consists of at least $m$ points.

Consider two edges $e(x),e(y)\in E$ with $e(x)\cap e(y)\neq\emptyset$. Then there is a member of $\mathcal C$ that contains both $x$ and $y$, so that $y$ must lie in the ball $B(x,D)$ of radius $D$ around $x$. Hence, all members of the covering that contain $y$ lie in the ball $B(x,2D)$ of radius $2D$ around $x$. Since the volume of each of these members is at least $v$, and no point of $B(x,2D)$ is covered more than $M$ times, we obtain that $B(x,D)$ can be intersected by at most $M Vol B(x,2D)/v=M(2D)^d Vol B^d/v$ members of the covering. %(Here $B^d$ denotes the unit ball in $\R^d$.) dom: mar a tetelben def van
By the definition of the dual shatter functions, those members of the covering that intersect $B(x,D)$ cut $B(x,D)$ into at most $\pi(M(2D)^d Vol B^d/v)$ pieces, each of which corresponds to an edge of $\H$. Therefore, for the maximum number $N$ of edges of $\H$ that can intersect the same edge $e(x)\in E$, we have
$$N\le\pi(M(2D)^d Vol B^d/v).$$

According to Lemma~\ref{locallemma} (for $k=2$), in order to show that the covering can be split into two, i.e., the hypergraph $\H$ is $2$-colorable, it is sufficient to assume that $N\le 2^{m-3}$. Comparing this with the previous inequality, the result follows.
\end{proof}

In the special case where $\mathcal C$ is the class of unit balls in $\R^d$, we have $v=Vol B^d$, $D=2$, and, in view of (\ref{gombok}), $\pi^{-1}(z)\ge z^{1/d}$. Thus, we obtain Theorem~\ref{thm:lll} with $c_d=2^{-2d-3/d}$.

If we want to decompose an $m$-fold covering into $k>2$ coverings, then the above argument shows that it is sufficient to assume that
$$\pi(M(2D)^d Vol B^d/v)\le k^{m-1}/4(k-1)^m.$$
In case of unit balls, this holds for $M\le c_{k,d}(1+\frac1{k-1})^{m/d}$ with $c_{k,d}=k^{-1/d}4^{-d-1/d}$.

Two sets are {\em homothets} of each other if one can be obtained from the other by a dilation with positive coefficient followed by a translation. It is easy to see~\cite{G75} that for $d=2$, the dual shatter function of the class $\mathcal C$ consisting of all homothets of a fixed convex set $C$ is at most $n^2-n+2\le n^2$, for every $n\ge 2$. In this case, Theorem~\ref{shatter} immediately implies

\begin{cor}\label{homothet}
Every $m$-fold covering $\mathcal C$ of the plane with homothets of a fixed convex set can be decomposed into two coverings, provided that no point of the plane belongs to more than $2^{(m-11)/2}$ members of $\mathcal C$.
\end{cor}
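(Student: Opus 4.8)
The plan is to read off the corollary from Theorem~\ref{shatter} by specializing to $d=2$ and to the class $\mathcal C$ of all homothets of the fixed convex body. The only piece of geometry needed is the bound on the dual shatter function: any two homothets of a planar convex body have boundaries that cross at most twice, so they form a family of pseudocircles, and the arrangement of $n$ pseudocircles has at most $n^2-n+2$ faces (a short Euler-formula argument, cf.~\cite{G75}); hence $\pi(n):=\pi^*_{\mathcal C}(n)\le n^2$ for $n\ge 2$ and $\pi^{-1}(2^{m-3})\ge 2^{(m-3)/2}$. Substituting this into Theorem~\ref{shatter} already produces a splitting statement; what remains is to check that the multiplicative constant standing in front of $2^{(m-3)/2}$ can be taken to be $2^{-4}$, which turns the exponent $(m-3)/2$ into the claimed $(m-11)/2$.

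To pin that constant down — and at the same time to cope with the fact that homothets of $C$ do not literally form a family of bounded diameter and lower-bounded volume, so Theorem~\ref{shatter} is not applicable verbatim — I would revisit its proof. There the parameters $D$ and $v$ enter only through the inequality $N\le\pi\!\big(M\,(2D)^d\,\mathrm{Vol}\,B^d/v\big)$ bounding the number of edges met by a fixed edge $e(x)$. For homothets one replaces the auxiliary round ball by a homothet of $C$ scaled to the size of each member through $x$: if $C_1,\dots,C_t$ (with $t\le M$) are the members containing $x$, then the edges met by $e(x)$ are exactly the cells of the full arrangement inside $C_1\cup\dots\cup C_t$, and counting the members that can subdivide a given $C_i$ reduces — via the coverage bound $M$ and an affine-invariant volume comparison between $C_i$ and the neighbourhood $C_i\oplus(C-C)$ of translates of $C$ meeting it — to a bound of the form $N\le(\kappa M)^2$ with $\kappa$ an absolute constant. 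Taking the (crude) value $\kappa=16$ and comparing $N\le 2^{m-3}$ gives $M\le 2^{(m-3)/2}/16=2^{(m-11)/2}$.

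The step I expect to be the genuine obstacle is precisely this local reduction: ensuring that, near a given point, only members of comparable scale affect the relevant cell count, so that the ``fatness'' double-counting of Theorem~\ref{shatter} survives the passage from balls to homothets of $C$, together with the affine-invariant volume estimate $\mathrm{Vol}\big(C_i\oplus(C-C)\big)\le\kappa\,\mathrm{Vol}(C_i)$. Once that local estimate is in place, the two-colouring is produced by the Lov\'asz local lemma (Lemma~\ref{locallemma} with $k=2$) exactly as in the proof of Theorem~\ref{shatter}, and the analogous decomposition into $k>2$ coverings would follow in the same way from the $k$-colour form of that lemma.
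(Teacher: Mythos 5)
Your first step is exactly the paper's: it cites Gr\"unbaum~\cite{G75} for the bound $\pi^*_{\mathcal C}(n)\le n^2-n+2\le n^2$ on the dual shatter function of the homothets of a planar convex body, and then simply states that Theorem~\ref{shatter} immediately yields the corollary, the constant $2^{(m-11)/2}$ being the $d=2$ value appearing in Theorem~\ref{thm:lll}. So the difficulty you raise --- that the class of all homothets has neither uniformly bounded diameter nor volume bounded from below, so Theorem~\ref{shatter} does not apply verbatim --- concerns a point on which the paper is silent; the paper carries out no local re-derivation of the kind you attempt.

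The problem is that the repair you sketch does not close the step you yourself flag as the obstacle, and as stated it cannot: it is simply not true that only members of scale comparable to the members through $x$ affect the number of edges meeting $e(x)$. A single very large homothet $K$ containing $x$ may also contain arbitrarily many pairwise disjoint tiny homothets far from $x$; each of these produces a distinct edge $e(y)$ with $K\in e(x)\cap e(y)$, while the covering multiplicity stays $O(m)$. Hence no bound of the form $N\le(\kappa M)^2$ holds for the hypergraph whose edges are the full sets $e(x)$, and Lemma~\ref{locallemma} cannot be invoked for it without first modifying the hypergraph (for instance, shrinking each $e(x)$ to a suitable $m$-element subset consisting of small members containing $x$) or adding some other genuinely new idea. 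The specific volume estimate is also off: $C_i\oplus(C-C)$ captures the translates of $C$ meeting $C_i$, not homothets of arbitrary size, and $\mathrm{Vol}\bigl(C_i\oplus(C-C)\bigr)\le\kappa\,\mathrm{Vol}(C_i)$ fails when $C_i$ is much smaller than $C$. Consequently the value $\kappa=16$, and with it the exponent $(m-11)/2$, is not justified by your argument; what your reasoning does give is the corollary for coverings in which the homothety ratios are confined to a bounded range, with a constant depending on that range and on $C$, which is weaker than the stated result.
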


Nasz\'odi and Taschuk \cite{NT} constructed a convex set $C$ in $\R^3$ such that the dual shatter function of the class of all translates of $C$ cannot be bounded from above by any polynomial of $n$. Therefore, for translates of $C$, the above approach breaks down. We do not know how to generalize Theorem~\ref{thm:lll} from balls to arbitrary convex bodies in $\R^d$, for $d\ge 3$.

For some related combinatorial results, see Bollob\'as et al.~\cite{B11}.

%A natural question to ask is whether Theorem~\ref{thm:lll} still holds if instead of balls, we consider the translates of some other convex set. Using the following well-known result about the dual shatter function of planar sets shows that is sufficient in this case as well.

%0 1 2 3  4  5
%1 2 4 8 14 22
%\begin{fact}\label{cor:vc}
%The number of regions that $n$ homothets of a planar convex set $C$ divide $\R^2$ into is at most $n^2-n+2$ which is at most $n^2$ if $2\le n$.
%\end{fact}
%\begin{proof}
%Any two homothets intersect each other in at most $2$ points.
%\end{proof}

%However, for convex sets in higher dimensions the situation is different;
%Gr\"unbaum \cite{G75} has shown that the dual VC-dimension of the homothets of any planar convex set is at most $3$ but Nasz\'odi and Taschuk \cite{NT} have constructed a convex set in $\R^3$ with infinite VC-dimension,
%Therefore, the number of regions can be bounded from above only by the trivial $2^{M4^d}$ which is less than $2^{m-3}$ only if $M<m$,
%thus we cannot apply the Lov\'asz local lemma.
%We leave it as an open problem to prove any bound in this case.

\section{Open problems and concluding remarks}\label{sec:open}
%what if we use 3 colors and want 2 in each disk? for 4, cor of 4-color thm for delaunay but for 3 there are planar graphs with large monochroms for every 3-col

Theorem~\ref{thm:smooth} states that, if $C$ is a plane convex body with two antipodal points at which the curvature is positive, then for every $m$, there exists an $m$-fold covering of $\mathbb{R}^2$ with translates of $C$ that does not split into two coverings.
We also know that this statement is false for any convex polygon.
%(Though an appropriate modification of the construction seems to work for a {\em typical} (in the Baire category sense) open convex set.) %dom: EZ SZTEM IGAZ, MERT BAR A GORBULET VEGTELEN ES NULLA KOZOTT VAN SZINTE MINDEN PONTBAN, A KONSTRUKCIO MEGVALOSITHATONAK TUNIK domuj: kihuztam, mert minek irjunk arrol, amihez nem ertunk.
But what happens if $C$ ``almost satisfies'' the condition concerning the antipodal point pair?

\begin{problem}
Does there exist an integer $m$ such that every $m$-fold covering of $\mathbb{R}^2$ with translates of an open semidisk splits into two coverings?
\end{problem}

%Note that the first few steps of our constructions are realizable with half disks,
%Figure~\ref{fig:MP}
% so the cover-decomposability constant of half disks cannot be $m=3$ as for unbounded convex sets, it has to be at $least $4$.

Another question, which surprisingly is widely open even in a completely abstract setting, is the following.

\begin{problem}
Suppose that for a body $C$, there is an integer $m$ such that every $m$-fold covering of $\mathbb{R}^d$ with translates of $C$ splits into two coverings. Does it follow that for every $k>2$, there is an integer $m_k$ such that every $m_k$-fold covering of $\mathbb{R}^d$ with translates of $C$ splits into $k$ coverings?
Is it true that (for the smallest such $m_k$) even $m_k=O_C(k)$?
\end{problem}

According to Theorem~\ref{thm:shiftchain}, for any $m\ge 3$, every $m$-uniform special shift-chain is $2$-colorable.
Keszegh and the P\'alv\"olgyi~\cite{KP14} recently extended this theorem to show that the vertices of every $(2k-1)$-uniform special shift-chain can be colored by $k$ colors so that every hyperedge contains at least one point of each color.

As was stated in the introduction, for every triangle (in fact, for every convex polygon) $C$, there is an integer $m(C)$ such that every $m$-fold covering of the plane with {\em translates} of $C$ splits into two coverings.
Keszegh and the P\'alv\"olgyi~\cite{KP} extended this theorem to $m$-fold coverings with {\em homothets} of a triangle. (Two sets are homothets of each other if one can be obtained from the other by a dilation with positive coefficient followed by a translation.) Using the idea of the proof of our Theorem~\ref{thm:disc}, Kov\'acs~\cite{K13} has recently showed that the analogous statement is false for homothets of any convex polygon with more than $3$ sides. For further results about decomposition of multiple coverings, see~\cite{A13,B11,colorful,colorful2,GV09,KP12,KP13}.

\subsection*{Acknowledgment}
The authors are deeply indebted to Professor Peter Mani, who passed away in 2013, for many interesting conversations about the topics, and his ideas reflected in the long unpublished manuscript~\cite{MP86}.
It was the starting point and an important source for the present work.

The authors also like to thank Radoslav Fulek, Bal\'azs Keszegh, and G\'eza T\'oth for their many valuable remarks.

%\clearpage

\end{document}